\declaretheorem[numberwithin=section]{theorem}
\declaretheorem[sibling=theorem]{proposition}
\declaretheorem[sibling=theorem]{definition}
\declaretheorem[sibling=theorem]{corollary}
\declaretheorem[sibling=theorem]{lemma}
\declaretheorem[sibling=theorem,style=remark]{remark}
\numberwithin{equation}{section} 
\def\R{\mathbb R}
\def\C{\mathbb C}
\def\N{\mathbb N}
\def\P{\mathbb P}
\def\E{\mathbb E}
\def\deg{\mathrm{deg}}
\def\1{\mathbbm 1}
\def\ex{\varepsilon}
\def\M{\mathbb{M}}
\def\U{\mathbb{U}}
\def\GL{\mathbb{GL}}
\def\A{\mathscr{A}}
\def\EX{\mathscr{E}}
\def\PP{\mathscr{P}}
\def\D{\mathcal{D}}
\def\L{\mathcal{L}}
\def\tensor{\otimes}
\def\del{\partial}
\newcommand{\tr}{\mathrm{tr}}
\newcommand{\End}{\mathrm{End}}
\newcommand{\Tr}{\mathrm{Tr}}
\newcommand{\mx}[1]{\mathbf{#1}}
\newcommand{\vertiii}[1]{{\left\vert\kern-0.25ex\left\vert\kern-0.25ex\left\vert #1 
    \right\vert\kern-0.25ex\right\vert\kern-0.25ex\right\vert}}
\begin{document}

\title{Fluctuations of Brownian Motions on $\mathbb{GL}_N$}
\author{Guillaume C\'ebron\thanks{Supported by the ERC advanced grant "Noncommutative distributions in free probability"} \\
Fachrichtung Mathematik \\
Saarland University \\
66123, Saarbr\"{u}cken, Germany \\
\texttt{cebron@math.uni-sb.de}
\and
Todd Kemp\thanks{Supported by NSF CAREER Award DMS-1254807} \\
Department of Mathematics \\
University of California, San Diego \\
La Jolla, CA 92093-0112 \\
\texttt{tkemp@math.ucsd.edu}
}

\date{\today}% \quad {\em File:} \jobname{.tex}}

\maketitle

\begin{abstract} We consider a two parameter family of unitarily invariant diffusion processes on the general linear group $\GL_N$ of $N\times N$ invertible matrices, that includes the standard Brownian motion as well as the usual unitary Brownian motion as special cases.  We prove that all such processes have Gaussian fluctuations in high dimension with error of order $O(\frac1N)$; this is in terms of the finite dimensional distributions of the process under a large class of test functions known as trace polynomials.  We give an explicit characterization of the covariance of the Gaussian fluctuation field, which can be described in terms of a fixed functional of three freely independent free multiplicative Brownian motions.  These results generalize earlier work of L\'evy and Ma\"ida, and Diaconis and Evans, on unitary groups. Our approach is geometric, rather than combinatorial.  \end{abstract}

\tableofcontents

\section{Introduction}

This paper is concerned with the fluctuations of of Brownian motions on the general linear groups $\GL_N = \mathrm{GL}(N,\C)$ when the dimension $N$ tends to infinity.

Let $\M_N$ denote the space of $N\times N$ complex matrices. A {\em random matrix ensemble} or {\em model} is a sequence of random variables $(B^N)_{N\geq 1}$ such that $B^N\in \M_N$. The first  phenomenon typically studied is the convergence in noncommutative distribution (cf.\ Section \ref{s.fmbm}) of $B^N$, meaning that for each noncommutative polynomial $P$ in two variables, we ask for convergence of $\E[\tr(P(B^N,{B^N}^*))]$, where $\tr$ is the normalized trace (so that $\tr(I_N)=1$).  In the special case that $B^N=(B^N)^\ast$ is self-adjoint, this is morally (and usually literally) equivalent to weak convergence in expectation of the empirical spectral distribution of $B^N$: the random probability measure placing equal masses at each of the random eigenvalues of the matrix.  The prototypical example here is Wigner's semicircle law \cite{Wigner1958}: if $B^N$ is a Wigner ensemble (meaning it is self-adjoint and the upper triangular entries are i.i.d.\ normal random variables with mean $0$ and variance $\frac1N$) then as $N\to\infty$ the empirical spectral distribution converges to $\frac{1}{2\pi}\sqrt{4-x^2}dx$.  In fact, the weak convergence is not only in expectation but almost sure.

For non-self-adjoint (and more generally non-normal) ensembles that cannot be characterized by their eigenvalues, the noncommutative distribution is the right object to consider.  As with Wigner's law, in most cases, we have the stronger result of almost sure convergence of the random variable $\tr(P(B^N,{B^N}^*))$ to its mean. It is therefore natural to ask for the corresponding central limit theorem: what is the rate of convergence to the mean, and what is the noise profile that remains?  More precisely, consider the random variables
\[ \tr(P(B^N,{B^N}^*))-\E[\tr(P(B^N,{B^N}^*))] \]
for each noncommutative polynomial $P$; these are known as the {\em fluctuations}.  The question is: what is their order of magnitude, and when appropriately renormalized, what is their limit as $N\to\infty$? The standard scaling for this kind of central limit theorem in random matrix theory is well-known to be $\frac{1}{N}$ instead of the classical $\frac{1}{\sqrt{N}}$ (see the fundamental work of Johansson~\cite{Johansson1998}). Thus far, it was known that
$$N\left(\tr(P(B^N,{B^N}^*))-\E[\tr(P(B^N,{B^N}^*))]\right)$$
is asymptotically Gaussian when
\begin{itemize}
\item $B^N$ is a Wigner random matrix~\cite{Cabanal-Duvillard2001};
\item $B^N$ is a unitary random matrix whose distribution is the Haar measure~\cite{Diaconis2001};
\item $B^N$ is a unitary random matrix arising from a Brownian motion on the unitary group~\cite{LevyMaida2010} or the orthogonal group~\cite{Dahlqvist2014}.
\end{itemize}

\begin{remark} The existence of Gaussian fluctuations of a random matrix model is sometimes referred to as a {\em second order distribution}; cf. \cite{MingoSpeicher2007,MingoSpeicher2006}, in which the authors gave the corresponding diagrammatic combinatorial theory of fluctuations.  The similar but more complicated combinatorial approach to fluctuations for Haar unitary ensembles was done in \cite{Collins2003,CollinsSniady2006}, where is is known as Weingarten calculus.  The recent work of Dahlqvist \cite{Dahlqvist2013} follows these ideas to provide the combinatorial framework for the finite-time heat kernels on classical compact Lie groups.  Our present approach is geometric, rather than combinatorial.
\end{remark}

Our main result is of this type, when $B^N$ is sampled from a two-parameter family of random matrix ensembles that may rightly be called {\em Brownian motions} on $\GL_N$. Fix $r,s>0$, and following~\cite{Kemp2013b}, we will define (in Section \ref{not}) an {\em $(r,s)$-Brownian motion} $(B^N_{r,s}(t))_{t\geq 0}$ on $\GL_N$ for each dimension $N>0$.  This family encompasses the two most well-studied Brownian motions on invertible matrices: the canonical Brownian motion $G^N(t)\equiv B^N_{\frac12,\frac12}(t)$ on $\GL_N$, and the canonical Brownian motion $U^N(t)\equiv B_{1,0}^N(t)$ on the unitary group $\U_N$.  These processes are given as solutions to matrix stochastic differential equations
\[ dG^N(t) = G^N(t)\,dZ^N(t), \qquad dU^N(t) = i U^N(t)\,dX^N(t) - \frac12U^N(t)\,dt \]
where the entries of $Z^N(t)$ are i.i.d.\ complex Brownian motions of variance $\frac{t}{N}$, and $X^N(t)=\sqrt{2}\Re(Z^N(t))$.  The study of the convergence in noncommutative distribution of $U^N(t)$ was completed by Biane~\cite{Biane1997}, and the case of $G^N(t)$ (for fixed $t>0$) was completed by the first author~\cite{Cebron2013}; the second author introduced the general processes $B^N_{r,s}(t)$ in~\cite{Kemp2013a,Kemp2013b} and proved they converge (as processes) a.s.\ in noncommutative distribution to the relevant free analog, {\em free multiplicative $(r,s)$-Brownian motion} (cf.\ Section \ref{s.fmbm}).  This naturally leads to the question of fluctuations of all these processes, which we answer in our Main Theorem~\ref{main} and Corollary \ref{c.main}.  We summarize a slightly simplified form of the result here as Theorem \ref{t.main.summ}.

\begin{theorem} \label{t.main.summ} Let $(B_{r,s}^N(t))_{t\ge 0}$ be an $(r,s)$-Brownian motion on $\GL_N$.  Let $n\in\N$ and $t_1,\ldots,t_n\ge 0$; set $\mx{T}=(t_1,\ldots,t_n)$, and let $B_{r,s}^N(\mx{T}) = (B_{r,s}^N(t_1),\ldots,B_{r,s}^N(t_n))$.  Let $P_1,\ldots,P_k$ be noncommutative polynomials in $2n$ variables, and define the random variables
\begin{equation}\label{e.fluctuations} X_j = N[\tr (P_j(B^N_{r,s}(\mx{T}),B^N_{r,s}(\mx{T})^\ast))-\E\tr(P_j(B^N_{r,s}(\mx{T}),B^N_{r,s}(\mx{T})^\ast))], \qquad 1\le j\le k. \end{equation}
Then, as $N\to\infty$, $(X_1,\ldots,X_k)$ converges in distribution to a multivariate centered Gaussian.
 \end{theorem}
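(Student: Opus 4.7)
The plan is to prove joint Gaussianity of the fluctuations via Itô calculus on $\GL_N$, exploiting the same structural decomposition of the left-invariant Laplacian $\L_{r,s}$ that drove the free limits in~\cite{Kemp2013a,Kemp2013b}. Write $V(t)=\tr(P(B_{r,s}^N(t),B_{r,s}^N(t)^\ast))$ for a fixed trace polynomial $P$. Itô's formula on $\GL_N$ gives
$$ dV(t) = (\L_{r,s}V)(t)\,dt + dM_V(t), $$
with $M_V$ a martingale. The key structural input, already implicit in the free-limit work, is that when restricted to the algebra of trace polynomials, $\L_{r,s}$ splits as $\L_{r,s}=\L^{(1)}_{r,s}+N^{-2}\L^{(2)}_{r,s}$, where $\L^{(1)}_{r,s}$ is a derivation preserving single-trace polynomials (and generates the free Brownian motion on their limits), while $\L^{(2)}_{r,s}$ lands in products of two traces. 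Dually, the bracket satisfies $d\langle M_V,M_W\rangle_t=N^{-2}\Gamma_{r,s}(V,W)\,dt$ for a carré-du-champ-type bilinear form $\Gamma_{r,s}$ that is again polynomial in the process.

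With these two ingredients, set $X^N_P(t)=N(\tr P-\E\tr P)$. Applying Itô to monomials in the $X^N_{P_j}$ and taking expectations produces a closed hierarchy of ODEs for the joint cumulants $\kappa^N_k(X^N_{P_1},\ldots,X^N_{P_k})$. The crucial point is that the $N^{-2}\L^{(2)}_{r,s}$ correction is precisely the factor that couples different cumulant orders, and the $N$-scaling matches so that in this hierarchy the coupling between cumulant orders $k$ and $k+1$ is of size $1/N^2$. An induction on $k$ then shows that all cumulants of order $k\ge 3$ tend to zero, while the second cumulants converge to the unique solution of a linear ODE with forcing given by the limit of $\Gamma_{r,s}$ on trace polynomials. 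The vanishing of higher cumulants together with the boundedness of the second cumulants delivers both tightness and joint Gaussianity of $(X_1,\ldots,X_k)$.

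The multi-time extension to $\mx{T}=(t_1,\ldots,t_n)$ proceeds by the Markov property: one embeds $B_{r,s}^N(\mx{T})$ into the diagonal flow on $\GL_N^n$ and applies the same analysis on the enlarged polynomial algebra, with the generator at time $t_n$ seeing the earlier times $t_1,\ldots,t_{n-1}$ as frozen parameters. Iterating across the coordinates of $\mx{T}$ yields a multi-time ODE hierarchy of the same shape as the one-time one, and the cumulant argument goes through verbatim. Throughout, one needs a priori polynomial $L^p$ bounds on $B_{r,s}^N(t)$ and $B_{r,s}^N(t)^{-1}$; these follow from the defining SDE via Burkholder--Davis--Gundy estimates.

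The main obstacle is Step 1: writing $\L^{(1)}_{r,s}$, $\L^{(2)}_{r,s}$, and $\Gamma_{r,s}$ in manifestly closed form on trace polynomials in $B_{r,s}^N(t)$ \emph{and} its adjoint, verifying that the splitting is exact (no residual terms that spoil the $N^{-2}$ count), and checking that it survives the two-parameter $(r,s)$ perturbation away from the unitary case. The $\GL_N$ setting is genuinely more delicate than the unitary case of~\cite{LevyMaida2010} because $B_{r,s}^N(t)$ is not bounded in operator norm uniformly in $N$, so the needed moment control is nontrivial and the algebra of trace polynomials must be handled without recourse to $C^\ast$-estimates. Once the operators are correctly identified and the moment bounds are in place, Gaussianity reduces to the linear ODE analysis sketched above; the identification of the limiting covariance with a functional of three freely independent free multiplicative Brownian motions is then a separate, purely free-probabilistic computation on the limiting ODE.
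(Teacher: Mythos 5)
Your proposal takes a genuinely different route from the paper: you propose It\^{o} calculus on $\GL_N$ plus a cumulant ODE hierarchy (essentially the L\'evy--Ma\"ida strategy transplanted to the $(r,s)$ setting), whereas the paper works entirely algebraically on the intertwining space $\PP(J)$, using the heat-semigroup identity $\E[[P]_N(B^N(\mx{T}))]=[e^{\D^{\mx{T}}+\frac{1}{N^2}\L^{\mx{T}}}P](\mx{1})$ plus a Duhamel expansion of $e^{\D^{\mx{T}}+\frac{1}{N^2}\L^{\mx{T}}}$ to the appropriate order in $1/N^2$, tracking which terms survive the $N^k$ rescaling. Both approaches rest on the same structural split of $\mx{T}\cdot\Delta^N$ into a first-order (derivation) piece $\D^{\mx{T}}$ and a second-order piece $\frac{1}{N^2}\L^{\mx{T}}$, together with the associated carr\'{e} du champ. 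What the paper's route buys is that one never needs moment/operator-norm control on the unbounded matrices $B^N_{r,s}(t)$ and $B^N_{r,s}(t)^{-1}$ (or BDG-type estimates): the whole analysis happens at the level of the finite-dimensional polynomial spaces $\PP_{kd}$, where the operators are just finite matrices, and the $O(1/N)$ error rate comes out for free. What your route buys is a more probabilistic picture (martingales, brackets), but it pushes the analytic burden back onto you.

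Three concrete concerns about your sketch. First, the claim that $\D^{\mx{T}}$ (your $\L^{(1)}_{r,s}$) ``preserves single-trace polynomials'' is false: $\D^{\mx{T}}$ is a derivation with respect to the \emph{commutative} product of traces, and already on a single $\tr(X^n)$ it produces sums of products $\tr(X^j)\tr(X^{n-j})$ (this is precisely the splitting that generates the free SDE in the $N\to\infty$ limit). Similarly, $\L^{\mx{T}}$ is not a map into products of two traces; its key property is the second-order product rule, not this. This mischaracterization does not necessarily sink the cumulant argument, but the precise grading that makes your $1/N^2$ bookkeeping close up needs to be the correct one, namely the first-order/second-order split of Theorem \ref{deltaonp}. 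Second, the multi-time extension is where the real novelty over L\'evy--Ma\"ida lies, and ``iterate the Markov property and freeze earlier coordinates'' is exactly the partial argument that got \cite{LevyMaida2010} only a restricted multi-time result; the paper's device is to set up the generator $\mx{T}\cdot\Delta^N$ on $\GL_N^J$ from the outset so the entire time vector is handled uniformly, and your proposal does not address why iterating avoids the coupling difficulties. Third, the a priori $L^p$ bounds you flag are genuinely needed in your route (to justify the martingale property and the bracket identity with polynomial integrands on a non-compact group); you should not wave this away, but note that it can be avoided entirely by working algebraically as the paper does.
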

As mentioned, Theorem \ref{t.main.summ} generalizes the main theorem \cite[Theorem 2.6]{LevyMaida2010} to general $r,s>0$ from the $(r,s)=(1,0)$ case considered there. In fact, even when $(r,s)=(1,0)$ this is a significant generalization, as the fluctuations proved in \cite{LevyMaida2010} were for a single time $t$ -- i.e.\ for a heat-kernel distributed random matrix -- while we prove the optimal result for the full process -- i.e.\ for all finite-dimensional distributions.

\begin{remark} \begin{itemize} \item[(1)] In fact, \cite[Theorem 8.2]{LevyMaida2010} does give a partial generalization to multiple times, in the sense that the argument of $P_j$ in \eqref{e.fluctuations} is allowed to depend on $B^N_{1,0}(t_j)$ for a $j$-dependent time; however, it must still be a function of Brownian motion at a single time.  Our generalization allows full consideration of all finite-dimensional distributions.
\item[(2)] To be fair, \cite{LevyMaida2010} yields Gaussian fluctuations for a larger class of test functions.  In the case of a single time $t$, the random matrix $U^N(t)$ is normal, and hence ordinary functional calculus makes sense; the fluctuations in \cite{LevyMaida2010} extend beyond polynomial test functions to $C^1$ functions with Lipschitz derivative on the unit circle.  Such a generalization is impossible for the generically non-normal matrices in $\GL_N$.\end{itemize} \end{remark}

Theorem \ref{main} actually gives a further generalization of Theorem \ref{t.main.summ}, as the class of test functions is not just restricted to traces of polynomials, but the much larger algebra of {\em trace polynomials}, cf.\ Section \ref{s.C{J}}.  That is, we may consider more general functions of the form $Y_j = \tr(P_j^1)\cdots \tr(P_j^n)$ (or linear combinations thereof); then the result of Theorem \ref{t.main.summ} applies to the fluctuations $X_j = N[Y_j-\E(Y_j)]$ as well.

% states that $N\left(\tr(P(B^N_{r,s}(t),{B^N}^*_{r,s}(t)))-\E[\tr(P(B^N_{r,s}(t),{B^N}^*_{r,s}(t)))]\right)$ is asymptotically Gaussian for each noncommutative polynomial $P$ and for each time $t>0$.
%
%In fact, Theorem~\ref{main} is more general with respect to two facts. Firstly, it deals with a whole family of random matrices arising from $(r,s)$-Brownian motions stopped at different times. Secondly, we consider the general case of product of traces instead of one trace. More precisely, we consider a family of independent $(r,s)$-Brownian motion $(B^{N,j}_{r,s})_{j\in J}$ indexed by a set $J$, which are copies of a same Brownian motions $B^N_{r,s}$ on $\GL_N$. We fix a family of time $\mx{T}=(t_j)_{j\in J}$, and we consider the family $B^N(\mx{T})=(B^{N,j}_{r,s}(t_j))_{j\in J}$. Let $Y^N_1,\ldots,Y^N_n$ be random variables of the form
%$$Y_i^N=\tr(P_1)\cdots \tr(P_k)$$ where each $P_l$ is a noncommutative polynomial in the elements of $B^N(\mx{T})$ and their adjoints. Theorem~\ref{main} says that the vector
%$$N(Y^N_i-\E[Y^N_i])_{1\leq i\leq n}$$
%is asymptotically Gaussian, with explicit covariances.

\begin{remark} Moreover, Theorem \ref{main} shows that the difference between any mixed moment in $X_1,\ldots,X_k$ and the corresponding mixed moment of the limit Gaussian distribution is $O(\frac{1}{N})$.  This implies that, in the language of \cite{MingoSpeicher2006}, the random matrices $B_{r,s}^N(\mx{T})$ possess a  second order distribution.  (Note we normalize the trace, while \cite{MingoSpeicher2006} uses the unnormalized trace, which accounts for the apparent discrepancy in normalizations.)  Since the random matrices $B_{r,s}^N(t)$ are unitarily invariant for each $t$, it then follows from \cite[Theorem 1]{MingoSpeicher2007} that the increments of $(B_{r,s}(t))_{t\ge 0}$ are {\em asymptotically free of second order}. \end{remark}

We can also explicitly describe the covariance of the fluctuations, and thus completely characterize them.  The full result is spelled out in Theorem \ref{t.main.cov}.  Here we state only one result of Corollary \ref{c.main.cov} (which already elucidates how the covariance extends from the unitary $(r,s)=(1,0)$ case).

\begin{theorem} \label{t.summ.cov} Let $(b_t)_{t\ge 0}$, $(c_t)_{t\ge 0}$, and $(d_t)_{t\ge 0}$ be freely independent free multiplicative $(r,s)$-Brownian motions in a tracial noncommutative probability space $(\mathscr{A},\tau)$ (for definitions, see Section \ref{s.fmbm}).  As in Theorem \ref{t.main.summ}, let $n=1$ and $\mx{T}=T$, and let $P_1,\ldots,P_k\in\C[X]$ be ordinary one-variable polynomials, with $X_1,\ldots,X_k$ denoting the fluctuations associated to $\tr P_1,\ldots,\tr P_n$.  Then their asymptotic Gaussian distribution has covariance $[\sigma_T(i,j)]_{1\le i,j\le k}$, where
\begin{equation} \label{e.ext.Levy} \sigma_T(i,j) = (r+s)\int_0^T \tau[P'(b_tc_{T-t})(Q'(b_td_{T-t}))^*]\,dt. \end{equation}
Here $P'$ is the derivative of $P$ relative to the unit circle:
\[ P'(z) = \lim_{h\to 0}\frac{f(ze^{ih})-f(z)}{h}. \]
\end{theorem}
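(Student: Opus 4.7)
The strategy is to specialize the general covariance formula of Theorem~\ref{t.main.cov} (via Corollary~\ref{c.main.cov}) to the single-time, single-trace, holomorphic case. To reconstruct the computation in this setting, I would begin with the It\^o expansion along the backward heat flow. Setting $u_j(t,B) := (e^{(T-t)\L_{r,s}}\tr P_j)(B)$, so that $u_j(T,\cdot)=\tr P_j$ and $u_j(0,I)=\E[\tr P_j(B^N_T)]$, the identity $(\partial_t+\L_{r,s})u_j=0$ combined with It\^o's formula yields
\[
N[\tr P_j(B^N_T) - \E \tr P_j(B^N_T)] = N\int_0^T \sum_\xi (\xi u_j)(t, B^N_t)\,dW^\xi_t,
\]
where $\{\xi\}$ is an $(r,s)$-orthonormal basis of $\gl_N$, viewed as left-invariant vector fields, and the $W^\xi$ are the independent driving Brownian motions. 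Taking quadratic covariation then gives
\[
\Cov(X_i,X_j) = N^2 \int_0^T \E\!\left[\sum_\xi (\xi u_i)(t,B^N_t)\,\overline{(\xi u_j)(t,B^N_t)}\right]dt.
\]

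The next step is to identify the $N\to\infty$ limit of the integrand. For a one-variable polynomial, cyclicity gives $(\xi\tr P)(b)=\tr(P'(b)\,b\,\xi)$, with $P'$ the ordinary derivative; the factor of $b$ on the right is exactly what converts $P'(b)b$ into the unit-circle derivative of the statement (up to a factor of $i$). Using the convergence of the matricial $(r,s)$-heat semigroup on trace polynomials to the free multiplicative $(r,s)$-Brownian motion semigroup, recalled in Section~\ref{s.fmbm} from~\cite{Kemp2013b}, one has $u_j(t,b)\longrightarrow \tau[P_j(b\,c^{(j)}_{T-t})]$, where $c^{(j)}_{T-t}$ is a free multiplicative $(r,s)$-Brownian motion freely independent of $b$. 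Crucially, for $i\ne j$ the two pieces $c^{(i)}_{T-t}$ and $c^{(j)}_{T-t}$ are also mutually freely independent, because they come from independent copies of the driving noise used inside the two separate heat semigroups. Differentiating, then summing against the $(r,s)$-ONB, the standard Casimir identity on $\gl_N$,
\[
N^2\sum_\xi \tr(A\xi)\,\overline{\tr(B\xi)} = (r+s)\,\tr(AB^*) + O(N^{-1})
\]
on matrices of bounded operator norm, contracts the two derivatives along a single trace and produces precisely the scalar factor $(r+s)$. Combining, and identifying $B^N_t$ with the limit $b_t$ while writing $c=c^{(i)},\,d=c^{(j)}$, yields exactly the integrand $(r+s)\,\tau[P'_i(b_t c_{T-t})(P'_j(b_t d_{T-t}))^*]$ in the statement.

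The principal obstacle is the rigorous passage to the $N\to\infty$ limit inside the time integral. Two ingredients are needed: the pointwise convergence of $N^2\,\E[\sum_\xi (\xi u_i)(\xi u_j)^*(t,B^N_t)]$ to the advertised free-probabilistic trace, and sufficient uniformity in $t\in[0,T]$ to interchange limit and integral. The pointwise statement reduces to the asymptotic freeness of $B^N_t$ from the two auxiliary independent copies inserted by the two separate heat semigroups, together with control that the $O(N^{-1})$ remainder in the Casimir identity survives the $N^2$ renormalization as $o(1)$ in the relevant matrix arguments. The uniformity is supplied by the uniform control on the action of $e^{(T-t)\L_{r,s}}$ on trace polynomials that is the main technical engine of the paper and that underlies Theorem~\ref{t.main.cov}. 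Once these are in place the claimed identity~\eqref{e.ext.Levy} follows by passing to the limit under the integral.
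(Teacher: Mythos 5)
Your argument is correct in outline, but it takes a genuinely different route from the paper. The paper never invokes It\^o's formula or the martingale representation theorem to identify the covariance: instead, Proposition \ref{covn} derives the three-Brownian-motion carr\'{e} du champ representation of $\sigma_{\mx{T}}$ algebraically, starting from the \emph{abstract} definition $\sigma_{\mx{T}}(P,Q) = 2\int_0^1 [e^{t\D^{\mx{T}}}\Gamma^{\mx{T}}(e^{(1-t)\D^{\mx{T}}}P,e^{(1-t)\D^{\mx{T}}}Q)](\mx{1})\,dt$, and the key step is the commutation of left- and right-invariant vector fields (not a stochastic calculus argument), which lets the heat semigroup be represented by inserting an auxiliary independent Brownian motion between $B^N_{t\mx{T}}$ and the identity. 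Your approach --- backward heat flow, martingale representation via It\^o, quadratic covariation --- is the strategy of L\'evy and Ma\"ida in \cite{LevyMaida2010}, adapted to $\GL_N$, and it lands on exactly the same carr\'{e} du champ integral that Proposition \ref{covn} produces (after recognizing $\frac12\sum_\xi (\partial_\xi u_i)\overline{(\partial_\xi u_j)} = \Gamma^{\mx{T}}_N(u_i,\overline{u_j})$ and using the Fubini/conditional-independence argument to push the two auxiliary heat-kernel averages outside). Both paths then finish identically: the magic formula of Proposition \ref{magicformula} contracts the derivative pairing to a single trace with the scalar $(r+s)$, and the convergence of trace polynomials in independent $(r,s)$-Brownian motions to freely independent free multiplicative Brownian motions (Remark \ref{r.copout}) lets $N\to\infty$. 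What the paper's algebraic route buys is that it operates entirely inside the finite-dimensional intertwining space $\PP(J)$, so all bounds are uniform for free, and the same machinery simultaneously proves the full Wick formula of Theorem \ref{main} (all mixed moments, not just the second); your It\^o route is more probabilistically transparent for the covariance alone but would need a separate argument for Gaussianity.

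Two small technical corrections. First, the contraction identity you call the ``Casimir identity'' is exact, not $O(N^{-1})$: the magic formula gives $N^2\sum_{\xi\in\beta^N_{r,s}}\tr(A\xi)\overline{\tr(B\xi)} = (r+s)\tr(AB^*)$ with no error term (the error in the covariance formula comes entirely from replacing $\D^{\mx{T}}+\frac{1}{N^2}\L^{\mx{T}}$ by $\D^{\mx{T}}$ and from the $N\to\infty$ concentration, not from the Lie-algebra pairing). Second, your phrasing ``asymptotic freeness of $B^N_t$ from the two auxiliary copies'' is slightly off: the auxiliary Brownian motions $C^N,D^N$ representing the two backward heat flows are chosen independent by construction, so what is needed is simply that the joint noncommutative distribution of the independent triple $(B^N_{t\mx{T}}, C^N_{(1-t)\mx{T}}, D^N_{(1-t)\mx{T}})$ converges to that of a freely independent triple $(b,c,d)$; this is the content of \cite[Corollary 5.6]{Kemp2013b}, as noted in Remark \ref{r.copout}, and it is uniform enough in $t$ to pass the limit through the time integral.
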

Eq. \eqref{e.ext.Levy} generalizes \cite[Theorem 2.6]{LevyMaida2010}. As pointed out there, in this special case the covariances converge as $T\to\infty$ to the Sobolev $H_{1/2}$ inner-product of the involved polynomials, reproducing the main result of \cite{Diaconis2001} (as it must, since the heat kernel measure on $\U_N$ converges to the Haar measure in the large time limit).  For more general trace polynomial test functions, the covariance can always be described by such an integral, involving three freely independent free multiplicative Brownian motions in an input function built out of the {\em carr\'{e} du champ} intertwining operator determined by the $(r,s)$-Laplacian on $\GL_N$, cf.\ Section \ref{s.carreduchamp}.

Let us say a few words about the notation used in the formulation of Theorem~\ref{main}. In~\cite{Cebron2013} and in~\cite{Driver2013,Kemp2013b}, two different formalism were developed to handle general trace polynomial functions. Concretely, two different spaces were defined, namely $\C\{X_j,X_j^\ast\colon j\in J\}$ and $\PP(J)$; in Theorem \ref{t.main.summ}, $J=\{1,\ldots,k\}$. Each space leads to a functional calculus adapted to linear combinations of functions from $\GL_N^J$ to $\C$ of the form
$$(G_j)_{j\in J}\mapsto \tr (P_1(G_j,G_j^*:j\in J))\cdots \tr (P_k(G_j,G_j^*:j\in J)),$$
where $P_1,\ldots, P_k$ are noncommutative polynomials in elements of $(G_j)_{j\in J}$ and their adjoints.  In  Appendix \ref{appendix}, we investigate the relationship between these two spaces, demonstrating an explicit algebra isomorphism between a subspace of $\C\{X_j,X_j^\ast\colon j\in J\}$ and $\PP(J)$ for a given index set $J$. For notational convenience, most of the calculations throughout this paper (in particular in the proof of Theorem~\ref{main}) are expressed using the space $\PP(J)$, but all the results and proofs of this article can be transposed from $\PP(J)$ to $\C\{X_j,X_j^\ast\colon j\in J\}$ without major modifications.

The rest of the paper is organized as follows. In Section~\ref{background}, we give the definition of the $(r,s)$-Brownian motion as well as the definitions of $\PP(J)$, and we recall some results from \cite{Cebron2013,Kemp2013b}. Section~\ref{secmain} provides the statement of our main result Theorem~\ref{main}, an abstract description of the limit covariance matrix, and the proof of Theorem~\ref{main}. In Section~\ref{studycov}, we give an alternative description of the limit covariance, using three noncommutative processes in the framework of free probability, extending the results in~\cite{LevyMaida2010} from the unitary case to the general linear case, and beyond to all $(r,s)$. Finally, Appendix \ref{appendix} defines the equivalent abstract space $\C\{X_j,X_j^\ast\colon j\in J\}$ to encode trace polynomial functional calculus, and investigates the relationship between $\C\{X_j,X_j^\ast\colon j\in J\}$ and $\PP(J)$.

\section{Background}\label{background}

In this section, we briefly describe the basic definitions and tools used in this paper.  Section \ref{not} discusses  Brownian motions on $\GL_N$ (including the Brownian motion on $\U_N$ as a special case).  Section \ref{s.C{J}} addresses trace polynomials functions, and the two (equivalent) abstract intertwining spaces used to compute with them.  Section \ref{s.heat kernel} states the main structure theorem for the Laplacian that is used to prove the optimal asymptotic results herein.  Finally, Section \ref{s.fmbm} gives a brief primer on free multiplicative Brownian motion.  For greater detail on these topics, the reader is directed to the authors' previous papers \cite{Cebron2013,Driver2013,Kemp2013a,Kemp2013b}.

\subsection{Brownian motions on $\GL_N$}\label{not}

Fix $r,s>0$ throughout this discussion. Define the real inner product $\langle\cdot,\cdot\rangle_{r,s}^N$ on $\M_N$ by
$$\langle A,B\rangle_{r,s}^N=\frac{1}{2}\left(\frac{1}{s}+\frac{1}{r}\right)N\Re \Tr(AB^*)+\frac{1}{2}\left(\frac{1}{s}-\frac{1}{r}\right)N\Re \Tr(AB) .$$
As discussed in \cite{Kemp2013b}, this two-parameter family of metrics encompasses all real inner products on $\M_N = \mathrm{Lie}(\GL_N)$ that are invariant under conjugation by $\U_N$ in a strong sense that is natural in our context, and so we restrict our attention to diffusion processes adapted to these metrics.  An {\em $(r,s)$-Brownian motion} on $\GL_N$ is a diffusion process starting at the identity and with generator $\frac{1}{2}\Delta_{r,s}^N$, where $\Delta_{r,s}^N$ is the Laplace-Beltrami operator on $\GL_N$ for the left-invariant metric induced by $\langle\cdot,\cdot\rangle_{r,s}^N$. More concretely, if we fix a orthonormal basis $\beta_{r,s}^N$ of $\M_N$ for the inner-product $\langle\cdot,\cdot\rangle_{r,s}^N$, we have
\[ \Delta_{r,s}^N=\sum_{\xi\in\beta_{r,s}^N}\del_{\xi}^2, \]
where, for $\xi\in \M_N$, $\partial_\xi$ denotes the induced left-invariant vector field on $\GL_N$:
\[ (\partial_\xi f)(g)=\left.\frac{d}{dt}\right|_{t=0}f(g e^{t\xi}),\hspace{1cm} f\in C^{\infty}(\GL_N). \]

The $(r,s)$-Brownian motion $B^N_{r,s}(t)$ may also be seen as the solution of a stochastic differential equation, cf.\ \cite[Section 2.1]{Kemp2013b}.  Let $W_{r,s}^N(t)$ denote the diffusion on $\M_N$ determined by the $(r,s)$-metric; in other words, let $W_\xi(t)$ be i.i.d.\ standard $\R$-valued Brownian motions for $\xi\in\beta^N_{r,s}$, and take
\[ W_{r,s}^N(t) = \sum_{\xi\in\beta^N_{r,s}} W_\xi(t)\xi. \]
This can also be expressed in terms of standard $\mathrm{GUE}^N$-valued Brownian motions:
\begin{equation}\label{e.WrsSDE} W_{r,s}^N(t) = \sqrt{r}iX^N(t) + \sqrt{s}Y^N(t) \end{equation}
where $X^N(t)$ and $Y^N(t)$ are independent Hermitian matrices, with all i.i.d.\ upper triangular entries that are complex Brownian motions of variance $\frac{t}{N}$ above the main diagonal and real Brownian motions of variance $\frac{t}{N}$ on the main diagonal.  Then the $(r,s)$-Brownian motion on $\GL_N$ is the unique solution of the stochastic differential equation
\begin{equation} \label{e.BrsSDE} dB_{r,s}^N(t) = B_{r,s}^N(t)\,dW^N_{r,s}(t) - \frac12(r-s)B^N_{r,s}(t)\,dt, \end{equation}
cf.\ \cite[Equation (2.10)]{Kemp2013b}.

Fix an index set $J$; in this paper $J$ will usually be finite. For all $j\in J$, let $B^{j,N}_{r,s}=(B^{j,N}_{r,s}(t))_{t\geq 0}$ be independent $(r,s)$-Brownian motions\footnote{We could vary the parameters $r,s$ with $j\in J$ as well, with only trivial modifications to the following; at present, we do not see any advantage in doing so.} on $\GL_N$. Set $B^N=(B^{j,N}_{r,s})_{j\in J}$, which is the family of independent $(r,s)$-Brownian motions on $\GL_N$ indexed by $J$. The process $(B^N(t))_{t\geq 0}$ is therefore a diffusion process on $\GL_N^J$. More precisely, $(B^N(t))_{t\geq 0}$ is a Brownian motion on the Lie group $\GL_N^J$ for the metric $(\langle\cdot,\cdot\rangle_{r,s}^N)^{\tensor J}$. The reader is directed to \cite[Section 3.1]{Kemp2013b} for a discussion of the Laplace operators on $\GL_N^J$ for the metric $(\langle\cdot,\cdot\rangle_{r,s}^N)^{\tensor J}$.  The degenerate $(r,s)=(1,0)$ case gives the usual Laplacian on $\U_N^J$, while $(r,s)=(\frac12,\frac12)$ yields the canonical Laplacian on $\GL_N^J$ (induced by the scaled Hilbert-Schmidt inner product on $\M_N=\mathrm{Lie}(\GL_N)$).

%For all $\mx{T}=(t_j)_{j\in J}$, we denote by $B^N(\mx{T})$ the family $(B^{j,N}_{r,s}(t_j))_{j\in J}$. How to describe the joint law of the random vector $B^N(\mx{T})$ as the law of a process on $\GL_N^J$ stopped at a particular time? We fix now $\mx{T}=(t_j)_{j\in J}$ with $t_j\ge 0$. 
For each $j\in J$, let $\Delta^N_j$ denote the Laplacian on the $j$th factor of $\GL_N$ in $\GL_N^J$.  That is to say,
\[ \Delta^N_j= \sum_{\xi_j\in\beta^N_{r,s}} \del_{\xi_j}^2 \]
 where $\beta_{r,s}^N$ is an orthonormal basis of $\M_N$ for the inner product $\langle\cdot,\cdot\rangle_{r,s}^N$, and for all $\xi_j\in \beta_{r,s}^N$, $\del_{\xi_j}$ is the left-invariant vector field which acts only on the $j$th component of $\GL_N^J$. For $j\in J$, let $t_j\ge 0$, and set $\mx{T}=(t_j)_{j\in J}$. We consider the operator
\begin{equation} \label{e.T} \mx{T}\cdot\Delta^N = \sum_{j\in J} t_j\Delta^N_j. \end{equation}
%For $J$ finite and all $t>0$, it follows from standard theory that this operator is elliptic, essentially self-adjoint on $C_c^\infty(\GL_N^J)$, and non-positive.  We may therefore use the spectral theorem to define the bounded operator
%\[ e^{\frac12\mx{T}\cdot\Delta^N} = \prod_{j\in J}e^{\frac{t_j}{2}\Delta^N_j}. \]
\begin{definition}For $J$ finite, denote by  $(B^N(t\mx{T}))_{t\geq 0}$ the diffusion process on $\GL_N^J$ with generator $\mx{T}\cdot\Delta^N$.\label{genbnt}
\end{definition}
\noindent We could write down a stochastic differential equation for $B^N(t\mx{T})$ similar to \eqref{e.BrsSDE}; for our purposes, we only need the fact that it is a diffusion process.

A common computational tool used throughout \cite{Cebron2013,Driver2013,Kemp2013a,Kemp2013b} is the collection of so-called ``magic formulas."  In the present context, the form needed is as follows; cf.\ \cite[Equations~(2.7)~and~(3.6)]{Kemp2013b}.

\begin{proposition}Let $\beta^N_{r,s}$ be any orthonormal basis of $\M_N$ for the inner product $\langle\cdot,\cdot\rangle_{r,s}^N$. Then, for any $A,B\in \M_N$, we have\label{magicformula}
\[  \sum_{\xi\in \beta^N_{r,s}}\tr(\xi A)\tr(\xi B)=\sum_{\xi\in \beta^N_{r,s}}\tr(\xi^* A)\tr(\xi^* B)=\frac{1}{N^2}(s-r)\tr(AB), \]
and
\[ \sum_{\xi\in \beta^N_{r,s}}\tr(\xi^* A)\tr(\xi B)=\frac{1}{N^2}(s+r)\tr(AB). \]
\end{proposition}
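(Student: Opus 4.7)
The plan is to exploit the $\langle\cdot,\cdot\rangle_{r,s}^N$-orthogonal decomposition $\M_N = \mathrm{Herm}(N)\oplus i\,\mathrm{Herm}(N)$ coming from $A = \tfrac{A+A^*}{2} + i\cdot\tfrac{A-A^*}{2i}$. A direct unwinding of the definition of $\langle\cdot,\cdot\rangle_{r,s}^N$ shows that the cross terms vanish and that the form restricts to $\tfrac{N}{s}\Re\Tr(\cdot\,\cdot)$ on the Hermitian factor and to $\tfrac{N}{r}\Re\Tr(\cdot\,\cdot)$ (under the natural identification) on $i\,\mathrm{Herm}(N)$. Consequently, starting from any Hilbert--Schmidt orthonormal basis $\{H^\alpha\}_\alpha$ of $\mathrm{Herm}(N)$ (that is, $\Tr(H^\alpha H^\beta)=\delta_{\alpha\beta}$), the concatenation
$$ \beta_{r,s}^N \;=\; \bigl\{\sqrt{\tfrac{s}{N}}\,H^\alpha\bigr\}_\alpha \,\sqcup\, \bigl\{i\sqrt{\tfrac{r}{N}}\,H^\alpha\bigr\}_\alpha $$
is orthonormal for $\langle\cdot,\cdot\rangle_{r,s}^N$. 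Since each sum in the proposition has the shape $\sum_\xi \varphi(\xi)\psi(\xi)$ with $\varphi,\psi\colon\M_N\to\C$ real-linear, and such sums over a real orthonormal basis are basis-independent (a Riesz-type computation on the real inner-product space $(\M_N,\langle\cdot,\cdot\rangle_{r,s}^N)$), it suffices to verify the three identities for this one explicit $\beta_{r,s}^N$.

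The only non-trivial input I will then invoke is the standard scalar Hermitian magic formula
$$ \sum_\alpha \Tr(H^\alpha A)\,\Tr(H^\alpha B) \;=\; \Tr(AB), \qquad A,B\in\M_N, $$
equivalently $\sum_\alpha (H^\alpha)_{ij}(H^\alpha)_{kl} = \delta_{il}\delta_{jk}$. This can be verified in one line on the ``Gell--Mann'' basis $\{E_{ii}\}_i\cup\{\tfrac{1}{\sqrt 2}(E_{ij}+E_{ji}),\,\tfrac{i}{\sqrt 2}(E_{ij}-E_{ji})\}_{i<j}$ of $\mathrm{Herm}(N)$, or recognized as Parseval for the real Hilbert--Schmidt inner product applied to the pairings $H\mapsto\Tr(HA)$, $H\mapsto\Tr(HB)$. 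In normalized trace this reads $\sum_\alpha \tr(H^\alpha A)\tr(H^\alpha B) = \tfrac{1}{N}\tr(AB)$.

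Substituting $\beta_{r,s}^N$ into each of the three sums now reduces everything to bookkeeping of $i$-factors from the skew-Hermitian half. On the Hermitian half $\xi^*=\xi$ and one picks up a factor $\tfrac{s}{N}$; on the skew-Hermitian half, the products $\xi\xi$ and $\xi^*\xi^*$ each absorb $(\pm i)^2=-1$ and pick up $-\tfrac{r}{N}$, whereas the mixed product $\xi^*\xi$ absorbs $(-i)(i)=+1$ and picks up $+\tfrac{r}{N}$. Combining with the magic formula yields $\tfrac{s-r}{N^2}\tr(AB)$ for the first two identities and $\tfrac{s+r}{N^2}\tr(AB)$ for the third. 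The main (mild) obstacle is simply tracking these signs and $i$'s; conceptually the proof is just that the $(r,s)$-form is Hilbert--Schmidt rescaled differently on the two blocks, plus the scalar magic formula for Hermitian bases.
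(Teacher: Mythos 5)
Your proof is correct. The paper itself gives no proof of this proposition; it simply cites \cite[Equations (2.7) and (3.6)]{Kemp2013b}. Your argument is the natural self-contained one, and all the steps check out: the cross terms between $\mathrm{Herm}(N)$ and $i\,\mathrm{Herm}(N)$ vanish because $\Tr(H_1H_2)$ is real for Hermitian $H_1,H_2$, so $\Re(\pm i\Tr(H_1H_2))=0$; the two rescaled Hermitian families you write down are indeed $\langle\cdot,\cdot\rangle^N_{r,s}$-orthonormal; basis-independence of $\sum_\xi\varphi(\xi)\psi(\xi)$ for $\R$-linear $\varphi,\psi$ follows by Parseval exactly as you indicate (decompose $\varphi,\psi$ into real and imaginary parts and apply Riesz to each, noting $\sum_\xi\langle a,\xi\rangle\langle b,\xi\rangle=\langle a,b\rangle$); and the final sign-and-factor bookkeeping gives $s-r$, $s-r$, $s+r$ as required. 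The scalar Hermitian identity $\sum_\alpha\Tr(H^\alpha A)\Tr(H^\alpha B)=\Tr(AB)$ (equivalently $\sum_\alpha H^\alpha\otimes H^\alpha=\text{SWAP}$) is the right black box to invoke, and reduces the $(r,s)$-formula to linear algebra over the two blocks exactly as you intended.
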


\subsection{The Space $\PP(J)$\label{s.C{J}}}

Let $J$ be an index set as above, and let $\mx{A}=(A_j)_{j\in J}$ be a collection of matrices in $\M_N$. A {\em trace polynomial} function on $\M_N$ is a linear combination of functions of the form
\[ \mx{A}\mapsto P_0(\mx{A})\tr(P_1(\mx{A}))\tr(P_2(\mx{A}))\cdots \tr(P_m(\mx{A})) \]
for some finite $m$, where $P_1,\ldots,P_m\in \C$ are noncommutative polynomials in $J\times\{1,\ast\}$ variables (i.e.\ the polynomials may depend explicitly on $A_j$ and $A_j^\ast$ for all $j\in J$).  Such functions arise naturally in our context: applying the operator $\mx{T}\cdot\Delta^N$ to the smooth function $\mx{A}\mapsto Q(\mx{A})$ for any noncommutative polynomial $Q$ generally results in a trace polynomial function.
%Let $P$ a noncommutative polynomial in $\C\langle J\rangle \equiv \C\langle X_j,X_j^\ast\colon j\in J\rangle$. When applying the operator $\mx{T}\cdot\Delta^N$ on the smooth function
%$(G_1,\cdots, G_j)\mapsto P(G_1,\ldots,G_j),$
%it makes appear a linear combination of functions of the form
%$$(G_1,\cdots, G_j)\mapsto P_0(G_1,\ldots,G_j)\tr (P_1(G_1,\ldots,G_j))\cdots \tr (P_k(G_1,\ldots,G_j)),$$
%where $P_0,\ldots, P_k\in \C\langle J\rangle$. 
The vector space of trace polynomial functions is closed under the action of $\mx{T}\cdot\Delta^N$, and this is a motivation for defining abstract spaces which encodes them. In~\cite{Cebron2013} and in~\cite{Driver2013,Kemp2013b}, two different spaces are defined, namely $\C\{X_j,X_j^\ast\colon j\in J\}$ and $\PP(J)$. In this section, we present the space $\PP(J)$, and the relation between $\C\{X_j,X_j^\ast\colon j\in J\}$ and $\PP(J)$ can be found in Appendix~\ref{appendix}.

First we give the definition of the space $\PP(J)$.  Let $\EX(J) = \bigcup_{n\geq 1} (J\times\{ 1,\ast\})^n$ be the set of all words whose letters are pairs of the form $(j,1)$, or $(j,\ast)$ for some $j\in J$.  Let $\mx{v}_J = \{v_\ex\colon \ex\in\EX(J)\}$ be commuting variables indexed by all such words, and let
\[ \PP(J) \equiv \C[\mx{v}_J] \]
be the algebra of \emph{commutative} polynomials in these variables.  That is, $\PP(J)$ is the vector space with basis $1$ together with all monomials
\begin{equation*} v_{\ex^{(1)}}\cdots v_{\ex^{(k)}}, \quad k\in\N, \quad \ex^{(1)},\ldots,\ex^{(k)}\in \EX(J), \end{equation*}
and the (commutative) product on $\PP(J)$ is the standard polynomial product.
\begin{remark}One can think of $\PP(J)$ as a particular framework of noncommutative functional calculus. Instead of considering tensor products of $\C\langle X_j,X^*_j:j\in J \rangle$ as usually in free probability, we consider symmetric tensor product of $\C\langle X_j,X^*_j:j\in J \rangle$, or equivalently, commutative polynomials in words in $(j,1)$, or $(j,\ast)$. It turns out that the commutativity of the product in $\PP(J)$ is very convenient in the forthcoming computations.\end{remark}
We present now the following notions of degree, evaluation and conjugation (see~\cite{Kemp2013b} for a detailed presentation):
\begin{itemize}\item
In \cite[Definition 3.2]{Kemp2013b}, the notion of {\em degrees} of elements in the space $ \PP(J) $ is defined:
$$\deg( v_{\ex^{(1)}}\cdots v_{\ex^{(k)}}) = |\ex^{(0)}|+\cdots+|\ex^{(k)}|,$$
% \\
%\deg_{\C\{J\}}(M_0\tr(M_1)\cdots\tr(M_k)) &= \deg_{\C\langle J\rangle} M_0+\cdots+\deg_{\C\langle J\rangle} M_k, \end{align*}
where $|\ex|$ is the length of the string $\ex$.

\item Let $(\mathscr{A},\tau)$ be a noncommutative probability space (cf.\ Section \ref{s.fmbm}). For each $\ex = ((j_1,\ex_1),\ldots,(j_n,\ex_n))$, there is an evaluation function
$[v_\ex]_{(\mathscr{A},\tau)}\colon \mathscr{A}^J\to \C$
given, for each $\mx{a}=(a_j)_{j\in J}\in\A^J$, by
%the function $\mx{V}^J_{(\mathscr{A},\tau)}(\mx{a})\colon\EX(J)\to\C$ is given by
\[ [v_\ex]_{(\mathscr{A},\tau)}(\mx{a}) = \tau(a_{j_1}^{\ex_1}\cdots a_{j_n}^{\ex_n}).\]Note, the $\ast$ is no longer a formal symbol here: $a_j^*$ means the adjoint of $a_j$ in $\mathscr{A}$. More generally, for all $ P\in\PP(J)=\C[\mx{v}_J]$, we define $[P]_{(\mathscr{A},\tau)}: \A^J\to \C$ by saying that, for all $\mx{a}\in\A^J$, the maps $P\mapsto [P]_{(\mathscr{A},\tau)}(\mx{a})$ are algebra homomorphisms from $\PP(J)=\C[\mx{v}_J]$ to $\C$. Let us emphasize that it implies the following commutativity between the evaluation and the product: for all polynomials $P,Q\in\PP(J)$ and $\mx{a}\in\A^J$, we have
$$[PQ]_{(\mathscr{A},\tau)}(\mx{a})=[P]_{(\mathscr{A},\tau)}(\mx{a})\cdot [Q]_{(\mathscr{A},\tau)}(\mx{a}).$$ 
%
%may be thought of as a function on $\C^{\EX(J)}$; indeed, these are the only functions on the abstract space $\C^{\EX(J)}$ that can be made sense of without additional analytic or metric structure.  We may therefore compose $\mx{V}^J_{(\mathscr{A},\tau)}$ with $P$, to give a $\C$-valued function $[P]_{(\mathscr{A},\tau)}=P\circ\mx{V}^J_{(\mathscr{A},\tau)}$ on $\mathscr{A}^J$. Note that more explicitly, for all 
In the particular case where $(\mathscr{A},\tau)=(\GL_N,\tr)$, we will simply denote the map $[P]_{(\mathscr{A},\tau)}$ by $[P]_N$.
We finally remark that if $\mx{a}=(a_j)_{j\in J}$ with $a_j=1_{\mathscr{A}}$ for all $j\in J$, then $[P]_{(\mathscr{A},\tau)}(\mx{a})$ does not depend on the space $(\mathscr{A},\tau)$, and we will simply denote it by
$$P(\mx{1})\equiv[P]_{(\mathscr{A},\tau)}(\mx{a}). $$
\item There is a natural notion of conjugation on $\PP(J)$: $P^\ast$ is the result of taking complex conjugates of all coefficients, and reversing $1\leftrightarrow\ast$ in all indices.  In terms of evaluation as a trace polynomial function, we have $[P^\ast]_N = \overline{[P]_N}$, cf.\ \cite[Lemma 3.17]{Kemp2013b}.
\end{itemize}

\subsection{Computation of the heat Kernel \label{s.heat kernel}}
We are now able to see how the Laplacian acts on the space of trace polynomial functions (i.e.\ functions on $\M_N$ given by evaluations $[P]_N$ of $P\in\PP(J)$).
%\begin{theorem}Let $j\in J$. There are collections $\{Q^j_\epsilon:\epsilon \in \EX\}$ and $\{R^j_{\epsilon,\delta}:\epsilon, \delta \in \EX\}$ such that:
%\begin{enumerate}
%\item 
%\item
%\end{enumerate}
%\end{theorem}
%We define the following operators on $\PP$: $$\D^{\mx{T}}=\frac{1}{2}\sum_{\substack{j\in J\\ \epsilon \in \EX}}t_jQ_{\epsilon}^j\frac{\partial}{\partial v_\epsilon}, \hspace{2cm}\L^{\mx{T}}=\frac{1}{2}\sum_{\substack{j\in J\\ \epsilon,\delta \in \EX}}t_jR_{\epsilon,\delta }^j\frac{\partial^2}{\partial v_\epsilon \partial v_\delta} ,$$
%and we define also the symmetric bilinear form on $\PP\times \PP$
%$$\Gamma^{\mx{T}}(P,Q )=\frac{1}{2}\left(\L^{\mx{T}}(PQ)-\L^{\mx{T}}(P)Q-P\L^{\mx{T}}(Q)\right)=\frac{1}{2}\sum_{\substack{j\in J\\ \epsilon,\delta \in \EX}}t_jR_{\epsilon,\delta }^j\frac{\partial}{\partial v_\epsilon}(P)\frac{\partial}{ \partial v_\delta}(Q). $$
\begin{theorem}{\cite[Theorems 3.8 and 3.9]{Kemp2013a}\label{deltaonp}} Let $\mx{T}$ be as in \eqref{e.T} above. There exist two linear operators $\D^{\mx{T}}$ and $\L^{\mx{T}}$ on $\PP(J)$, independent from $N$, such that:
\begin{enumerate}
\item $\D^{\mx{T}}$ is a first-order operator, i.e. for all $P,Q\in \PP(J)$, $\D^{\mx{T}}(PQ)=\D^{\mx{T}}(P)Q+P\D^{\mx{T}}(Q)$;
\item $\L^{\mx{T}}$ is a second-order operator, i.e. for all $P,Q,R\in \PP(J)$, $$\L^{\mx{T}}(PQR)=\L^{\mx{T}}(PQ)R+P\L^{\mx{T}}(QR)+\L^{\mx{T}}(PR)Q-\L^{\mx{T}}(P)QR-P\L^{\mx{T}}(Q)R-PQ\L^{\mx{T}}(R);$$
\item For all $P\in \PP(J)$,  $(\mx{T}\cdot\Delta^N)([P]_N)=\left[(\D^{\mx{T}}+\frac{1}{N^2}\L^{\mx{T}})(P)\right]_N$.
\end{enumerate}
\end{theorem}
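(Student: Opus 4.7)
The plan is to construct $\D^{\mx{T}}$ and $\L^{\mx{T}}$ first on the generators $v_\ex$ of $\PP(J)=\C[\mx{v}_J]$ by a direct computation of the Laplacian on a single trace, then extend them to all of $\PP(J)$ using the required algebraic properties (1) and (2), and finally verify (3) by induction on the number of factors of a monomial. Since $\PP(J)$ is a free commutative algebra on $\{v_\ex\colon\ex\in\EX(J)\}$, the derivation property (1) extends $\D^{\mx{T}}$ uniquely once its values on generators are fixed; the analogous extension of $\L^{\mx{T}}$ will be carried out through its carr\'e du champ bi-derivation $\Gamma$.

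For the base computation, fix $\ex=((j_1,\ex_1),\ldots,(j_n,\ex_n))$, so $[v_\ex]_N(\mx{A})=\tr(A_{j_1}^{\ex_1}\cdots A_{j_n}^{\ex_n})$. Expanding via Leibniz,
\[
\partial_{\xi_j}^2[v_\ex]_N(\mx{A})=\sum_{\substack{k,\ell=1\\ j_k=j_\ell=j}}^{n}\tr\bigl(\cdots\xi_j^{?_k}\cdots\xi_j^{?_\ell}\cdots\bigr),
\]
a double sum over pairs of positions with two insertions of $\xi_j$ or $\xi_j^\ast$ (the pattern $?_k,?_\ell$ being determined by $\ex_k,\ex_\ell$). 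Summing over $\xi\in\beta_{r,s}^N$ and using the single-trace magic formulas expanding $\sum_\xi\xi A\xi$ and $\sum_\xi\xi^\ast A\xi$ in terms of $A$ and $\tr(A)I_N$ (companions of Proposition \ref{magicformula} reflecting the same inner-product structure), each such $\xi$-sum decomposes as a single-trace contribution of order $1$ in $N$ plus a product-of-two-traces contribution of order $1/N^2$; the coefficients $(s\pm r)$ distinguish the $(\xi,\xi)$ from the $(\xi^\ast,\xi)$ insertion patterns. Multiplying by $t_j$ and summing over $j\in J$ (no cross-$j$ terms appear because $\partial_{\xi_j}$ touches only the $j$-th coordinate), one obtains
\[
(\mx{T}\cdot\Delta^N)([v_\ex]_N)=\bigl[\D^{\mx{T}}(v_\ex)\bigr]_N+\frac{1}{N^2}\bigl[\L^{\mx{T}}(v_\ex)\bigr]_N,
\]
where $\D^{\mx{T}}(v_\ex)\in\PP(J)$ is a linear combination of single generators $v_{\ex'}$ (the cyclic joins of $\ex$ at pair positions) and $\L^{\mx{T}}(v_\ex)\in\PP(J)$ is a linear combination of products $v_{\ex'}v_{\ex''}$ (the cyclic splits of $\ex$), both manifestly $N$-independent.

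To extend to all of $\PP(J)$, declare $\D^{\mx{T}}$ a derivation. For $\L^{\mx{T}}$, define the symmetric carr\'e du champ $\Gamma\colon\PP(J)\times\PP(J)\to\PP(J)$ on pairs of generators by letting $\Gamma(v_{\ex^{(1)}},v_{\ex^{(2)}})$ be the $N$-independent trace polynomial representing
\[
2N^2\sum_{j\in J} t_j\sum_{\xi\in\beta_{r,s}^N}\partial_{\xi_j}[v_{\ex^{(1)}}]_N\cdot\partial_{\xi_j}[v_{\ex^{(2)}}]_N,
\]
a sum of single generators by direct application of Proposition \ref{magicformula}. Extend $\Gamma$ symmetrically as a bi-derivation on $\PP(J)$; then, starting from $\L^{\mx{T}}(v_\ex)$ as defined by the base computation, extend $\L^{\mx{T}}$ recursively to monomials by $\L^{\mx{T}}(PQ)=\L^{\mx{T}}(P)Q+P\L^{\mx{T}}(Q)+\Gamma(P,Q)$. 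That this recursion is associative (well-defined) and that the resulting operator satisfies (2) are algebraic consequences of $\Gamma$ being a bi-derivation. Property (3) then follows by induction on the number $k$ of factors of $P=v_{\ex^{(1)}}\cdots v_{\ex^{(k)}}$: diagonal Leibniz contributions $\partial_{\xi_j}^2[v_{\ex^{(i)}}]_N\prod_{l\ne i}[v_{\ex^{(l)}}]_N$ match the derivation-type expansions of $\D^{\mx{T}}$ and the per-factor part of $\L^{\mx{T}}$, while each cross term $2\partial_{\xi_j}[v_{\ex^{(i)}}]_N\,\partial_{\xi_j}[v_{\ex^{(i')}}]_N\prod_{l\ne i,i'}[v_{\ex^{(l)}}]_N$, after summing over $\xi$, contributes $\frac{1}{N^2}\bigl[\Gamma(v_{\ex^{(i)}},v_{\ex^{(i')}})\prod_{l\ne i,i'}v_{\ex^{(l)}}\bigr]_N$, matching precisely the extra $\Gamma$-piece produced by the recursive definition of $\L^{\mx{T}}$.

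The main obstacle is algebraic rather than analytic: one must confirm that the extension of $\L^{\mx{T}}$ through $\Gamma$ is consistent, i.e.\ that the bilinear map $\Gamma$ defined on pairs of generators really is a bi-derivation on $\PP(J)$. This is automatic from the formula for $\Gamma$ on generators, which arises as a sum over pairs of positions with each position occurring linearly, so joint linearity in sub-words is built in. The remaining work is combinatorial bookkeeping: tracking the cyclic permutations of words in $\EX(J)$ produced by each pair of positions in the magic-formula computation, together with the four insertion patterns $(\xi^{?_k},\xi^{?_\ell})$ with $(s\pm r)/N^2$ coefficients dictated by Proposition \ref{magicformula}.
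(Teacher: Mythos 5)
The paper does not prove this statement; it cites it directly from Kemp's earlier work (Theorems 3.8 and 3.9 of the reference \texttt{Kemp2013a}), where the operators are built inductively, and refers to explicit formulas for the special cases in the reference \texttt{Cebron2013}. Your overall strategy --- define $\D^{\mx{T}}$ and $\L^{\mx{T}}$ on generators $v_\ex$, extend to monomials via the derivation property for $\D^{\mx{T}}$ and a carr\'e-du-champ recursion for $\L^{\mx{T}}$, then verify property (3) by Leibniz induction --- is indeed the right shape and mirrors how the cited references proceed.

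However, the base computation on a single generator is wrong in a way that reverses the roles of $\D^{\mx{T}}$ and $\L^{\mx{T}}$. You claim that each two-insertion $\xi$-sum yields ``a single-trace contribution of order $1$ in $N$ plus a product-of-two-traces contribution of order $1/N^2$,'' and that consequently $\D^{\mx{T}}(v_\ex)$ consists of single generators (cyclic joins) while $\L^{\mx{T}}(v_\ex)$ consists of products (cyclic splits). In fact, the first-kind magic formulas for an orthonormal basis $\beta_{r,s}^N$ of $(\M_N,\langle\cdot,\cdot\rangle_{r,s}^N)$ are exactly
\[
\sum_{\xi\in\beta_{r,s}^N}\xi A\xi = (s-r)\,\tr(A)\,I_N, \qquad \sum_{\xi\in\beta_{r,s}^N}\xi^\ast A\xi = (s+r)\,\tr(A)\,I_N,
\]
with \emph{no} $A$-term and \emph{no} $1/N^2$ correction (verify, say, for $(r,s)=(\tfrac12,\tfrac12)$ and $(r,s)=(1,0)$ with the explicit bases; the general case follows from $\beta_{r,s}^N=\{\sqrt{s}\,\eta_k\}\cup\{i\sqrt{r}\,\eta_k\}$ for an $N\Tr$-orthonormal Hermitian basis $\{\eta_k\}$). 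Consequently, $\Delta^N\bigl[\,v_\ex\,\bigr]_N$ is $N$-independent; for example $\Delta^N\tr(A^2) = 2(s-r)\bigl[\tr(A)^2+\tr(A^2)\bigr]$, with the product of traces already present at order $1$. This forces $\L^{\mx{T}}(v_\ex)=0$ on every generator, while $\D^{\mx{T}}(v_\ex)$ is the full (generically quadratic-in-$\mx{v}$) polynomial produced by the cyclic splits. The only source of the $1/N^2$ factor is the cross (carr\'e-du-champ) term between two distinct generators, governed by the second-kind magic formulas of Proposition~\ref{magicformula}: $\sum_\xi\tr(\xi A)\tr(\xi B)=\tfrac{1}{N^2}(s-r)\tr(AB)$, which produces the cyclic \emph{joins}. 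Thus it is $\Gamma^{\mx{T}}$ (and hence $\L^{\mx{T}}$ on products) that produces single generators, and $\D^{\mx{T}}$ that produces the splits --- exactly opposite to what you asserted. With this corrected base case the rest of your argument (extending $\D^{\mx{T}}$ as a derivation, building $\L^{\mx{T}}$ from $\Gamma^{\mx{T}}$ by the recursion $\L^{\mx{T}}(PQ)=\L^{\mx{T}}(P)Q+P\L^{\mx{T}}(Q)+2\Gamma^{\mx{T}}(P,Q)$, and verifying (3) by Leibniz on the Laplacian) goes through as you describe.
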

\begin{remark}In \cite[Section 3.3]{Kemp2013a}, there is an inductive definition of $\D^{\mx{T}}$ and $\L^{\mx{T}}$ which are denoted similarly. In \cite[Sections 4.1 and 4.2]{Cebron2013}, there is an explicit definition of $\D^{\mx{T}}$ in the simple cases of $J=\{1\}$ and $(r,s)=(1,0)$ or $(r,s)=(\frac12,\frac12)$, which corresponds respectively to $\Delta_U$ and $\Delta_{GL}$, and of $\L^{\mx{T}}$ in the same simple cases, which corresponds respectively to $\tilde{\Delta}_U$ and $\tilde{\Delta}_{GL}$. Since we don't need any more details about $\D^{\mx{T}}$ and $\L^{\mx{T}}$, we refer to \cite{Cebron2013,Driver2013,Kemp2013a,Kemp2013b} for further informations about those operators.
\end{remark}
Using Definition~\ref{genbnt}, we deduce the following result from Theorem~\ref{deltaonp}.
\begin{corollary} \label{c.deltaonp} Let $B^N=(B^{j,N}_{r,s})_{j\in J}$ be a collection of independent $(r,s)$-Brownian motions on $\GL_N$. Let $P\in \PP(J)$. We have\label{exbt}
$$\E\left([P]_N(B^N(\mx{T}))\right)=\left[e^{\D^{\mx{T}}+\frac{1}{N^2}\L^{\mx{T}}}(P)\right](\mx{1}). $$
\end{corollary}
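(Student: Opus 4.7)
My approach is to combine the semigroup interpretation of the Markov process $(B^N(t\mx{T}))_{t\geq 0}$ with the intertwining identity of Theorem~\ref{deltaonp}, and then evaluate at the identity.

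\emph{Step 1: Semigroup interpretation.} By Definition~\ref{genbnt}, $(B^N(t\mx{T}))_{t\geq 0}$ is a Markov diffusion on $\GL_N^J$ starting at the identity element $\mx{1}$ with generator $\mx{T}\cdot\Delta^N$. Standard semigroup theory (Kolmogorov's backward equation) says that for any smooth test function $f:\GL_N^J\to\C$ with controlled growth, the function $u(t,g) = \E^g[f(B^N(t\mx{T}))]$ solves $\del_t u = (\mx{T}\cdot\Delta^N) u$ with $u(0,\cdot) = f$, so $\E[f(B^N(\mx{T}))] = u(1,\mx{1}) = \left(e^{\mx{T}\cdot\Delta^N}f\right)(\mx{1})$. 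The trace polynomial functions $f = [P]_N$ are polynomial in the matrix entries, so this applies.

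\emph{Step 2: Iterated intertwining.} Theorem~\ref{deltaonp}(3) states that the map $P\mapsto[P]_N$ intertwines the action of $\D^{\mx{T}}+\frac{1}{N^2}\L^{\mx{T}}$ on $\PP(J)$ with the action of $\mx{T}\cdot\Delta^N$ on trace polynomial functions. Because the output $(\D^{\mx{T}}+\frac{1}{N^2}\L^{\mx{T}})(P)$ again lies in $\PP(J)$, we can iterate this to obtain, for every integer $k\geq 0$,
\[
(\mx{T}\cdot\Delta^N)^k\bigl([P]_N\bigr) = \left[\bigl(\D^{\mx{T}}+\tfrac{1}{N^2}\L^{\mx{T}}\bigr)^k(P)\right]_N.
\]
Moreover, one checks from the inductive construction of $\D^{\mx{T}}$ and $\L^{\mx{T}}$ in~\cite{Kemp2013a,Kemp2013b} that each of them preserves the finite-dimensional subspace $\PP_{\leq m}(J)\subset\PP(J)$ of polynomials of degree at most $m = \deg(P)$. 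Consequently $e^{\D^{\mx{T}}+\frac{1}{N^2}\L^{\mx{T}}}$ is a well-defined operator on this finite-dimensional space (the exponential series terminating as a convergent series of bounded operators on $\PP_{\leq m}(J)$), and its action on $P$ is unambiguous.

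\emph{Step 3: Assembling the identity.} Summing the iterated intertwining identity as a convergent Taylor series in $t$ at $t=1$ gives
\[
e^{\mx{T}\cdot\Delta^N}\bigl([P]_N\bigr)
  = \sum_{k\geq 0} \frac{1}{k!}\left[\bigl(\D^{\mx{T}}+\tfrac{1}{N^2}\L^{\mx{T}}\bigr)^k(P)\right]_N
  = \left[e^{\D^{\mx{T}}+\frac{1}{N^2}\L^{\mx{T}}}(P)\right]_N,
\]
where the interchange of sum and evaluation map is valid because evaluation is linear and the sum is genuinely finite on each monomial basis of $\PP_{\leq m}(J)$. Evaluating both sides at the identity element $\mx{1}\in\GL_N^J$ and combining with Step~1, and using the observation from Section~\ref{s.C{J}} that $[Q]_N(\mx{1}) = Q(\mx{1})$ for any $Q\in\PP(J)$ (evaluation at the identity is independent of $N$), yields the desired formula.

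\emph{Main obstacle.} The one genuinely nontrivial point is ensuring that the semigroup $e^{\mx{T}\cdot\Delta^N}$ really is well defined on $[P]_N$ and that the exponential series on the $\PP(J)$-side makes sense. Both issues are resolved by the degree-preservation property of $\D^{\mx{T}}$ and $\L^{\mx{T}}$: the dynamics effectively reduces to the finite-dimensional invariant subspace $\PP_{\leq m}(J)$, on which the exponential, the Taylor expansion, and the intertwining all manipulate only finitely many real-analytic functions and hence require no further analytic justification.
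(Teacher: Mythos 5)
Your proposal is correct and takes essentially the same route as the paper, which gives only a one-sentence justification (the heat-semigroup identity for diffusions) and leaves the iteration of Theorem~\ref{deltaonp} and the finite-dimensionality argument implicit; you have simply spelled out those implicit steps, including the degree-preservation that makes the exponential series legitimate. The only place worth tightening is the passage from the probabilistic semigroup in Step~1 to the power-series exponential in Step~3: these agree because, on the finite-dimensional invariant subspace, the matrix exponential produces the unique polynomial-growth solution of the backward heat equation, and a martingale argument (It\^o's formula applied to $u(t-s, B^N(s\mx{T}))$, using that $B^N$ has finite moments of all orders) identifies that solution with the probabilistic expectation—but this is a standard point and your appeal to the reduction to a finite-dimensional invariant subspace correctly captures why there is nothing more to check.
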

\noindent This is merely the statement, in the present language, of the fact that the expectation of any function of a diffusion can be computed by applying the associated heat semigroup to the function and evaluating at the starting point.

\subsection{Free Multiplicative Brownian Motion\label{s.fmbm}}

Here we give a very brief description of free stochastic processes, and free probability in general.  For a complete introduction to the tools of free probability, the best source is the \cite{Nica2006}.  For brief summaries of central ideas and tools from free stochastic calculus, the reader is directed to \cite[Section 1.2-1.3]{CollinsKemp2014},  \cite[Section 2.4-2.5]{Kemp2013a}, \cite[Section 2.7]{Kemp2013b}, and \cite[Section 1.1-1.2]{KNPS2012}.

A {\em noncommutative probability space} is a pair $(\mathscr{A},\tau)$ where $\mathscr{A}$ is a unital algebra of operators on a (complex) Hilbert space, and $\tau$ is a (usually tracial) state on $\mathscr{A}$: a linear functional $\tau\colon\mathscr{A}\to\C$ such that $\tau(1)=1$ and $\tau(ab)=\tau(ba)$.  Typical examples are $\mathscr{A}=\M_N, \tau=\tr$ (deterministic matrices), or $\mathscr{A}=\M_N\tensor L^{\infty-}(\P), \tau = \tr\tensor \E_\P$ (random matrices with entries having moments of all orders).  In infinite-dimensional cases, it is typical to add other topological and continuity properties to the pair $(\mathscr{A},\tau)$ that we will not elaborate on presently.  Elements of the algebra $\mathscr{A}$ are generally called random variables.  In any noncommutative probability space, one can speak of the {\em noncommutative distribution} of a collection of random variables $a_1,\ldots,a_n\in\mathscr{A}$: it is simply the collection of all mixed moments in $a_1,\ldots,a_n,a_1^\ast,\ldots,a_n^\ast$; that is the collection $\tau[P(a_j,a_j^\ast)_{1\le j\le n}]$ for all noncommutative polynomials $P$ in $2n$ variables.  We then speak of convergence in noncommutative distribution: if $(\mathscr{A}_N,\tau_N)$ are noncommutative probability spaces, a sequence $(a_1^N,\ldots,a_n^N)\in\mathscr{A}_N^n$ converges in distribution to $(a_1,\ldots,a_n)\in\mathscr{A}^n$ if
\[ \tau[P(a^N_j,(a^N_j)^\ast)_{1\le j\le n}] \to \tau[P(a_j,a_j^\ast)_{1\le j\le n}] \; \text{as} \;N\to\infty, \quad \text{for each }P. \]

{\em Free independence} (sometimes just called freeness) is an independence notion in any noncommutative probability space.  Two random variables $a,b\in\mathscr{A}$ are freely independent if, given any $n\in\N$ and any noncommutative polynomials $P_1,\ldots,P_n,Q_1,\ldots,Q_n$ each in two variables which are such that $\tau(P_j(a,a^\ast))=\tau(Q_j(b,b^\ast))=0$ for each $j$, it follows that $\tau(P_1(a,a^\ast)Q_1(b,b^\ast)\cdots P_n(a,a^\ast)Q_n(b,b^\ast)) =0$.  This gives an algorithm for factoring moments: it implies that $\tau(a^nb^m)=\tau(a^n)\tau(b^m)$ for any $n,m\in\N$, just as holds for classically independent random variables, but it also includes higher-order noncommutative polynomial factorizations; for example $\tau(abab) = \tau(a^2)\tau(b)^2 + \tau(a)^2\tau(b^2)-\tau(a)^2\tau(b)^2$.  One finds freely independent random variables typically only in infinite-dimensional noncommutative probability spaces, although random matrices often exhibit asymptotic freeness (i.e.\ they converge in noncommutative distribution to free objects).

In \cite{Voiculescu1991}, Voiculescu showed that there exists a noncommutative probability space (any free group factor, for example) that possesses limits $x(t),y(t)$ of the matrix-valued diffusion processes $X^N(t),Y^N(t)$ of \eqref{e.WrsSDE} that are freely independent.  Note that this convergence is not just for each $t$ separately, but for the whole process: convergence of the finite-dimensional noncommutative distributions.  The one-parameter families $x(t),y(t)$ are known as (free copies of) {\em additive free Brownian motion}.  We refer to them as free stochastic processes, although they are deterministic in the classical sense.

There is an analogous theory of stochastic differential equations in free probability, cf.\ \cite{Biane1998a,BianeSpeicher2001}.  One may construct stochastic integrals with respect to free additive Brownian motion, precisely mirroring the classical construction.  In sufficiently rich noncommutative probability spaces (such as the one Voiculescu dealt with in \cite{Voiculescu1991}), free It\^o stochastic differential equations of the usual form
\[ dm(t) = \mu(t,m(t))\,dt + \sigma(t,m(t))\,dx(t), \]
have unique long-time solutions with a given initial condition, assuming standard continuity and growth conditions on the drift and diffusion coefficient functions $\mu,\sigma$.  In particular, letting $w_{r,s}(t) = \sqrt{r}ix(t)+\sqrt{s}y(t)$ (mirroring \eqref{e.WrsSDE}), the free stochastic differential equation analogous to \eqref{e.BrsSDE},
\[ db_{r,s}(t) = b_{r,s}(t)\,dw_{r,s}(t) - \frac12(r-s)b_{r,s}(t)\,dt, \quad b_{r,s}(0)=1, \]
has a unique solution which we call {\em free multiplicative $(r,s)$-Brownian motion}.  In the special case $(r,s)=(1,0)$, the resulting process takes values in unitary operators and is known as {\em free unitary Brownian motion}; when $(r,s)=(\frac12,\frac12)$, it is known as (standard) {\em free multiplicative Brownian motion}.  Both were introduced in \cite{Biane1997a}, where it was proven that the process $(B^N_{1,0}(t))_{t\ge 0}$ converges to the process $(b_{1,0}(t))_{t\ge 0}$.  The main theorem of \cite{Kemp2013b} is the corresponding convergence result for the general processes $(B^N_{r,s}(t))_{t\ge 0}$ to $(b_{r,s}(t))_{t\ge 0}$.

\section{Gaussian Fluctuations}\label{secmain}

In this section, we prove our main Theorem \ref{main}, which is summarized in the slightly weaker form of Theorem \ref{t.main.summ} in the Introduction.  To begin, in Section \ref{s.carreduchamp} we set the stage with the main tool involved in the computation: the {\em carr\'{e} du champ} operator associated to the Laplacian on $\GL_N^J$, which measures the defect of this second order operator from satisfying the product rule.  Section \ref{s.maintheorem} then gives the statement of our Main Theorem \ref{main} and associated results that together yield the Gaussian fluctuations of the $\GL_N$ Brownian motions.  Finally, Section \ref{s.main.proof} is devoted to the proof of Theorem \ref{main}.

\subsection{The carr\'{e} du champ of $\mx{T}\cdot\Delta^N$}\label{s.carreduchamp}
 We define the  {\em carr\'{e} du champ} operator of $\mx{T}\cdot\Delta^N$ for all twice continuously differentiable $f,g:\GL_N^J \to \C$ by
$$\Gamma^{\mx{T}}_N(f,g)=\frac{1}{2}\left((\mx{T}\cdot\Delta^N)(fg)-(\mx{T}\cdot\Delta^N)(f)g-f(\mx{T}\cdot\Delta^N)(g)\right),$$
or equivalently by
$$\Gamma^{\mx{T}}_N(f,g)=\frac{1}{2}\sum_{\xi\in \beta_{r,s}^N, j\in J}t_j \cdot (\partial_{\xi_j} f )(\partial_{\xi_j} g).$$
As with the operator $\mx{T}\cdot\Delta^N$ in Theorem~\ref{deltaonp}, the operator $\Gamma^{\mx{T}}_N$ is the push forward of an operator on $\PP(J)$ as follows. Let us define the symmetric bilinear form on $\PP(J)\times \PP(J)$ by
$$\Gamma^{\mx{T}}(P,Q )=\frac{1}{2}\left(\L^{\mx{T}}(PQ)-\L^{\mx{T}}(P)Q-P\L^{\mx{T}}(Q)\right)
%=\frac{1}{2}\sum_{\substack{j\in J\\ \epsilon,\delta \in \EX}}t_jR_{\epsilon,\delta }^j\frac{\partial}{\partial v_\epsilon}(P)\frac{\partial}{ \partial v_\delta}(Q)
.$$
\begin{proposition}\label{p.carre}
For all $P,Q\in \PP(J)$, we have $N^2\Gamma^{\mx{T}}_N([P]_N,[Q]_N )=\left[\Gamma^{\mx{T}}(P,Q)\right]_N.$
\end{proposition}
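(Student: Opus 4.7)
The plan is to unwind the definitions and exploit two facts: that $P \mapsto [P]_N$ is an algebra homomorphism (so $[PQ]_N = [P]_N [Q]_N$), and that Theorem~\ref{deltaonp} identifies the action of $\mx{T}\cdot\Delta^N$ on trace polynomials as the pushforward of $\D^{\mx{T}} + \frac{1}{N^2}\L^{\mx{T}}$, where $\D^{\mx{T}}$ is a derivation.

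First, I would substitute $f = [P]_N$ and $g = [Q]_N$ into the definition of $\Gamma^{\mx{T}}_N$. Using multiplicativity of evaluation, the product $[P]_N [Q]_N$ equals $[PQ]_N$, so all three terms $(\mx{T}\cdot\Delta^N)([PQ]_N)$, $(\mx{T}\cdot\Delta^N)([P]_N)$, and $(\mx{T}\cdot\Delta^N)([Q]_N)$ can be rewritten via Theorem~\ref{deltaonp} as evaluations of $(\D^{\mx{T}} + \frac{1}{N^2}\L^{\mx{T}})$ applied to $PQ$, $P$, and $Q$ respectively. Again using that $[\cdot]_N$ is an algebra homomorphism, I can pull the $[Q]_N$ and $[P]_N$ factors back inside the bracket, yielding
\[
\Gamma^{\mx{T}}_N([P]_N,[Q]_N) = \tfrac{1}{2}\bigl[(\D^{\mx{T}} + \tfrac{1}{N^2}\L^{\mx{T}})(PQ) - (\D^{\mx{T}} + \tfrac{1}{N^2}\L^{\mx{T}})(P)\,Q - P\,(\D^{\mx{T}} + \tfrac{1}{N^2}\L^{\mx{T}})(Q)\bigr]_N.
\]

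Next, I would split this expression into its $\D^{\mx{T}}$-part and its $\L^{\mx{T}}$-part. Since $\D^{\mx{T}}$ satisfies the Leibniz rule by part (1) of Theorem~\ref{deltaonp}, the $\D^{\mx{T}}$-part vanishes identically inside the bracket. What remains is precisely $\frac{1}{2N^2}[\L^{\mx{T}}(PQ) - \L^{\mx{T}}(P)Q - P\L^{\mx{T}}(Q)]_N$, which by definition equals $\frac{1}{N^2}[\Gamma^{\mx{T}}(P,Q)]_N$. Multiplying through by $N^2$ yields the claim.

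There is no real obstacle here; the proposition is essentially a bookkeeping identity expressing that $\Gamma^{\mx{T}}$ is the intrinsic carré du champ associated to the second-order part $\L^{\mx{T}}$ of the pushed-forward operator, and the factor $N^2$ exactly compensates the $\frac{1}{N^2}$ scaling in Theorem~\ref{deltaonp}. The only care needed is to check that the evaluation bracket is compatible with the algebra structure of $\PP(J)$, which is the commutativity identity $[PQ]_{(\mathscr{A},\tau)}(\mx{a}) = [P]_{(\mathscr{A},\tau)}(\mx{a}) \cdot [Q]_{(\mathscr{A},\tau)}(\mx{a})$ already stated in Section~\ref{s.C{J}}.
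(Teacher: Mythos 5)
Your proof is correct and uses exactly the same ingredients as the paper's: Theorem~\ref{deltaonp}, the derivation property of $\D^{\mx{T}}$, and multiplicativity of the evaluation map $[\cdot]_N$. The only difference is the direction of the computation (you start from $\Gamma^{\mx{T}}_N([P]_N,[Q]_N)$ and reduce to $[\Gamma^{\mx{T}}(P,Q)]_N$, whereas the paper starts from $\Gamma^{\mx{T}}(P,Q)$ and pushes forward), which is cosmetic.
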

\begin{proof}Let us denote by $\D^{\mx{T}}_N$ the operator $\D^{\mx{T}}+\frac{1}{N^2}\L^{\mx{T}}$. We have $\D^{\mx{T}}(PQ)-\D^{\mx{T}}(P)Q-P\D^{\mx{T}}(Q)=0$. As a consequence,
$\Gamma^{\mx{T}}(P,Q )=\frac{N^2}{2}\left(\D_N^{\mx{T}}(PQ)-\D_N^{\mx{T}}(P)Q-P\D_N^{\mx{T}}(Q)\right)$. Using $(\mx{T}\cdot\Delta^N)([P]_N)=\left[\D^{\mx{T}}_N(P)\right]_N$, we obtained that
$$ \left[\Gamma^{\mx{T}}(P,Q)\right]_N=\frac{N^2}{2}\left((\mx{T}\cdot\Delta^N)([PQ]_N)-(\mx{T}\cdot\Delta^N)([P]_N)\cdot[Q]_N-[P]_N\cdot (\mx{T}\cdot\Delta^N)([Q]_N)\right),$$
which is the {\em carr\'{e} du champ} of $\mx{T}\cdot\Delta^N$, as wanted.
\end{proof}
Since $\L^{\mx{T}}$ is a second-order differential operator, we have the following.
\begin{lemma}For all $P,Q,R\in \PP(J)$,
$$\Gamma^{\mx{T}}(PQ,R)=\Gamma^{\mx{T}}(P,R)\cdot Q+P\cdot \Gamma^{\mx{T}}(Q,R) .$$
Additionally, for all $P_1,\ldots,P_k\in \PP(J)$, \label{landg}
$$\L^{\mx{T}}(P_1\cdots P_k)=\sum_{i=1}^k P_1\cdots \widehat{P_i}\cdots P_k \L^{\mx{T}}(P_i)+2\sum_{1\leq i<j\leq k} P_1\cdots \widehat{P_i}\cdots \widehat{P}_j\cdots P_k \Gamma^{\mx{T}}(P_i,P_j),
$$
where the hats mean that we omit the correspond terms in the product.
\end{lemma}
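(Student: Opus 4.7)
My plan is to prove the two identities in sequence, with the first (a Leibniz rule for $\Gamma^{\mx{T}}$ in its first argument) serving as the key tool for the induction that establishes the second.

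For the first identity, I would simply unfold the definition of $\Gamma^{\mx{T}}$ on both sides. Writing
\[ 2\Gamma^{\mx{T}}(PQ,R) \;=\; \L^{\mx{T}}(PQR)-\L^{\mx{T}}(PQ)R-PQ\,\L^{\mx{T}}(R), \]
the key step is to eliminate the three-factor term $\L^{\mx{T}}(PQR)$ using the second-order property of $\L^{\mx{T}}$ from Theorem~\ref{deltaonp}(2). Substituting that identity, the right-hand side becomes
\[ P\L^{\mx{T}}(QR)+\L^{\mx{T}}(PR)Q-\L^{\mx{T}}(P)QR-P\L^{\mx{T}}(Q)R-2PQ\,\L^{\mx{T}}(R). \]
On the other hand, expanding $2(\Gamma^{\mx{T}}(P,R)Q+P\Gamma^{\mx{T}}(Q,R))$ directly from the definition of $\Gamma^{\mx{T}}$ yields the same expression (here one uses that $\PP(J)$ is commutative, so $QP=PQ$ and $\L^{\mx{T}}(PR)Q=Q\L^{\mx{T}}(PR)$, etc.). Matching the two completes the proof.

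For the second identity I would argue by induction on $k$. The case $k=1$ is trivial and the case $k=2$ is literally the definition of $\Gamma^{\mx{T}}$. For the inductive step, set $Q=P_2\cdots P_k$ and apply the $k=2$ case to $P_1\cdot Q$:
\[ \L^{\mx{T}}(P_1\cdots P_k) \;=\; P_1\L^{\mx{T}}(Q)+Q\L^{\mx{T}}(P_1)+2\Gamma^{\mx{T}}(P_1,Q). \]
To the term $\L^{\mx{T}}(Q)$ I apply the inductive hypothesis, producing the single-$\L^{\mx{T}}$ terms for indices $i\ge 2$ with coefficients $P_1\cdots\widehat{P_i}\cdots P_k$, together with the Gamma terms for pairs $2\le i<j\le k$. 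The summand $Q\L^{\mx{T}}(P_1)$ supplies the missing $i=1$ term. Finally, by iterating the first identity of the lemma (which is symmetric in its two arguments since $\L^{\mx{T}}$ is symmetric in its arguments inside the $\Gamma^{\mx{T}}$ definition), I obtain
\[ \Gamma^{\mx{T}}(P_1,P_2\cdots P_k) \;=\; \sum_{j=2}^{k} P_2\cdots\widehat{P_j}\cdots P_k\,\Gamma^{\mx{T}}(P_1,P_j), \]
and multiplying by $2$ produces exactly the pair contributions $(1,j)$ for $j\ge 2$. Collecting all terms gives the stated formula.

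I do not expect a serious obstacle: everything reduces to expanding the definition of $\Gamma^{\mx{T}}$ and invoking the second-order property of $\L^{\mx{T}}$, with the commutativity of $\PP(J)$ streamlining the bookkeeping. The only point that requires care is the factor of $2$ in front of the pair sum in the second identity: it appears precisely because the $k=2$ Leibniz formula contributes a $2\Gamma^{\mx{T}}(P_1,Q)$ that unfolds via the first identity into the pairs $(1,j)$ with coefficient $2$, matching the $(i,j)$ pairs with $2\le i<j\le k$ that already carry coefficient $2$ from the inductive hypothesis.
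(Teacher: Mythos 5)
Your proposal is correct and follows essentially the same route as the paper: the first identity comes from substituting the second-order expansion of $\L^{\mx{T}}(PQR)$ into the definition of $\Gamma^{\mx{T}}(PQ,R)$, and the second follows by induction using the definition of $\Gamma^{\mx{T}}$ (the $k=2$ case) together with iterated application of the Leibniz rule. The only cosmetic difference is that you peel off $P_1$ on the left while the paper peels off $P_k$ on the right; both rely on the symmetry of $\Gamma^{\mx{T}}$ in exactly the way you note.
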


\begin{proof}Using the second-order property of $\L^{\mx{T}}$, we compute
\begin{align*}2\Gamma^{\mx{T}}(PQ,R)&=\L^{\mx{T}}(PQR)-\L^{\mx{T}}(PQ)R-PQ\L^{\mx{T}}(R)\\
&=\L^{\mx{T}}(PR)Q-\L^{\mx{T}}(P)QR-PQ\L^{\mx{T}}(R)\\
&\qquad\qquad +P\L^{\mx{T}}(QR)-P\L^{\mx{T}}(Q)R-PQ\L^{\mx{T}}(R)\\
&=2\Gamma^{\mx{T}}(P,R)\cdot Q+2P\cdot \Gamma^{\mx{T}}(Q,R).
\end{align*}
By a direct induction, we deduce that
\begin{align*}\L^{\mx{T}}(P_1\cdots P_k)=&\L^{\mx{T}}(P_1\cdots P_{k-1})P_k+P_1\cdots P_{k-1}\L^{\mx{T}}(P_k)+2\Gamma^{\mx{T}}(P_1\cdots P_{k-1},P_k)\\
=&\L^{\mx{T}}(P_1\cdots P_{k-1})P_k+P_1\cdots P_{k-1}\L^{\mx{T}}(P_k)+2\sum_{1\leq i\leq k} P_1\cdots \widehat{P_i}\cdots  P_{k-1} \Gamma^{\mx{T}}(P_i,P_k)\\
=&\cdots\\
=&\sum_{i=1}^k P_1\cdots \widehat{P_i}\cdots P_k \L^{\mx{T}}(P_i)+2\sum_{1\leq i<j\leq k} P_1\cdots \widehat{P_i}\cdots \widehat{P}_j\cdots P_k \Gamma^{\mx{T}}(P_i,P_j).\qedhere
\end{align*}
\end{proof}

\subsection{Main theorem\label{s.maintheorem}}
For all $P,Q\in\PP(J)$, define \[X_P^N \equiv N\left([P]_N(B^N(\mx{T}))-\E[[P]_N(B^N(\mx{T}))]\right)\] and
\[ \sigma_{\mx{T}}(P,Q) \equiv 2\int_0^1 \left[e^{t\D^{\mx{T}}}\left(\Gamma^{\mx{T}}(e^{(1-t)\D^{\mx{T}}}P,e^{(1-t)\D^{\mx{T}}}Q)\right)\right](\mx{1})\,dt. \]
Note that $P\in\PP(J)$, and the finite-dimensional subspace of elements with degree lower than or equal to the degree of $P$ is invariant under $\D^{\mx{T}}$ (cf.\ \cite[Corollary 3.10]{Kemp2013b}).  Hence, $e^{(1-t)\D^{\mx{T}}}$ makes sense in this context.  The same argument applied twice more shows that the integrand makes sense, and the finite-dimensionality of all involved polynomials yields continuity, so the integral is perfectly well-defined.

The following theorem says that the quantities of the form $\E(X^N_{P_1}\cdots X^N_{P_k})$ satisfy a Wick formula in large dimension, with covariances given by $ \sigma_{\mx{T}}$. Let us denote by $\mathcal{P}_2(k)$ the set of (unordered) pairings of $\{1,\ldots,k\}$.
\begin{theorem} \label{main}
For any $P_1,\ldots,P_k\in\PP(J)$, we have
\[ \E(X^N_{P_1}\cdots X^N_{P_k}) = \sum_{\pi\in\mathcal{P}_2(k)}\prod_{\{i,j\}\in\pi} \sigma_{\mx{T}}(P_i,P_j) + O\left(\frac{1}{N}\right). \]
\end{theorem}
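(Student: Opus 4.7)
I would prove Theorem~\ref{main} by deriving an ODE for the centered joint moments, identifying the $N\to\infty$ limit with the Wick sum via ODE matching, and controlling the error rate by Gr\"onwall on a finite-dimensional invariant subspace of $\PP(J)$.

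\emph{Step 1 (ODE for the centered moments).} I would set $c_i(t) := \E[P_i]_N(B^N(t\mx{T}))$ and
\[\mu^N_t(P_1,\ldots,P_k) := \E\prod_{i=1}^k\bigl([P_i]_N(B^N(t\mx{T})) - c_i(t)\bigr),\]
so that $\E(X^N_{P_1}\cdots X^N_{P_k}) = N^k\mu^N_1(P_1,\ldots,P_k)$. Applying the Markov identity $\frac{d}{dt}\E f(t,B^N(t\mx{T})) = \E[\partial_t f + (\mx{T}\cdot\Delta^N)f](t,B^N(t\mx{T}))$ to $f(t,\mx{g}) = \prod_i([P_i]_N(\mx{g}) - c_i(t))$ and invoking Theorem~\ref{deltaonp}, Lemma~\ref{landg}, and Proposition~\ref{p.carre}, the boundary terms from $\partial_t c_i$ exactly cancel the uncentered components of $\E[(\prod_{j\neq i}(Y_j - c_j))[\D^N P_i]_N]$ (with $\D^N := \D^{\mx{T}}+\frac{1}{N^2}\L^{\mx{T}}$), yielding the clean formula
\begin{align*}
\frac{d\mu^N_t}{dt}(P_1,\ldots,P_k) &= \sum_{i=1}^k \mu^N_t(\ldots,\D^N P_i,\ldots) \\
&\quad + \frac{2}{N^2}\sum_{i<j}\bigl[\mu^N_t(\ldots,\widehat{P_i},\widehat{P_j},\ldots,\Gamma^{\mx{T}}(P_i,P_j)) + \gamma^N_{ij}(t)\,\mu^N_t(\ldots,\widehat{P_i},\widehat{P_j},\ldots)\bigr],
\end{align*}
where $\gamma^N_{ij}(t) := \E[\Gamma^{\mx{T}}(P_i,P_j)]_N(B^N(t\mx{T}))$.

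\emph{Step 2 (Rescaling to $m^N_t := N^k\mu^N_t$).} Proved inductively alongside the main claim, the a priori bound $\mu^N_t(Q_1,\ldots,Q_m) = O(N^{-m})$ converts the rescaled ODE into
\[\frac{dm^N_t}{dt}(P_1,\ldots,P_k) = \sum_i m^N_t(\ldots,\D^{\mx{T}} P_i,\ldots) + 2\sum_{i<j}\gamma^N_{ij}(t)\,m^N_t(\ldots,\widehat{P_i},\widehat{P_j},\ldots) + O(1/N),\]
since the $\Gamma^{\mx{T}}$-insertion term and the $\L^{\mx{T}}$-correction each contribute only $O(1/N)$ after the rescaling by $N^k$, with initial condition $m^N_0 = 0$ for $k\ge 1$.

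\emph{Step 3 (Matching the Wick formula).} For $t\in[0,1]$, let $\sigma^{(t)}_{\mx{T}}$ denote the time-$t$ analogue of $\sigma_{\mx{T}}$ (replace the upper integration limit $1$ by $t$), so $\sigma^{(1)}_{\mx{T}} = \sigma_{\mx{T}}$, and define $W^{(t)}(P_1,\ldots,P_k) := \sum_{\pi\in\mathcal{P}_2(k)}\prod_{\{i,j\}\in\pi}\sigma^{(t)}_{\mx{T}}(P_i,P_j)$. Direct differentiation yields
\[\frac{d}{dt}\sigma^{(t)}_{\mx{T}}(P,Q) = 2\gamma_{PQ}(t) + \sigma^{(t)}_{\mx{T}}(\D^{\mx{T}} P,Q) + \sigma^{(t)}_{\mx{T}}(P,\D^{\mx{T}} Q),\]
where $\gamma_{PQ}(t) := [e^{t\D^{\mx{T}}}\Gamma^{\mx{T}}(P,Q)](\mx{1})$. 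Applying Leibniz to the product over pairings and reindexing $\sum_\pi\sum_{\{a,b\}\in\pi}(\cdots) = \sum_{a<b}\sum_{\pi\ni\{a,b\}}(\cdots)$, I would show that $W^{(t)}$ satisfies exactly the limiting ODE of Step~2, with $W^{(0)} = 0$ for $k\ge 1$.

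\emph{Step 4 (Gr\"onwall and induction).} Since $\D^{\mx{T}}$, $\L^{\mx{T}}$, and $\Gamma^{\mx{T}}$ preserve the finite-dimensional subspace of $\PP(J)$ of total degree at most $\sum_i\deg P_i$ (cf.\ \cite[Corollary 3.10]{Kemp2013b}), Gr\"onwall's inequality applied inductively on $k$ to the difference $m^N_t - W^{(t)}$ yields the uniform rate $m^N_1 = W^{(1)} + O(1/N)$, which is precisely the statement of Theorem~\ref{main}. The base cases $k=0,1$ are trivial, and $k=2$ follows directly from the standard Duhamel identity together with Proposition~\ref{p.carre}.

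\emph{Main obstacle.} The principal difficulty is the combinatorial identity in Step~3: that Leibniz-differentiating $\prod_{\{i,j\}\in\pi}\sigma^{(t)}_{\mx{T}}(P_i,P_j)$ and summing over pairings reproduces both the $\D^{\mx{T}}$-flow source (using symmetry of $\sigma^{(t)}_{\mx{T}}$) and the carr\'e-du-champ source (by recognizing $\sum_{\pi\ni\{a,b\}}\prod_{\text{other pairs}}\sigma^{(t)}_{\mx{T}}$ as $W^{(t)}$ on the $k-2$ remaining polynomials). Once this identity is in hand, Step~1 is careful bookkeeping with Lemma~\ref{landg} and Proposition~\ref{p.carre}, while Step~4 is a standard Gr\"onwall estimate on a finite-dimensional invariant subspace.
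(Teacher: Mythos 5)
Your argument is correct in its essential structure but takes a genuinely different route from the paper. The paper first reduces to the diagonal case $P_1=\cdots=P_k=P$ by polarization, then applies an iterated Duhamel expansion of $e^{\D^{\mx{T}}+\frac{1}{N^2}\L^{\mx{T}}}$ (Lemma~\ref{l.Driver} pushed to order $[k/2]$), and isolates the leading term by a degree-counting argument that pins down $(e^D Q_N)^{k-2n}$ as $O(N^{-2(k-2n)})$. Your approach instead derives a transport equation for the centered joint moments, rescales by $N^k$, shows the Wick functional $W^{(t)}$ solves the limiting equation, and closes via Gr\"onwall on the difference $m^N_t-W^{(t)}$ in a finite-dimensional invariant subspace. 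Your route avoids the polarization step entirely, makes the emergence of the pairing combinatorics transparent through the Leibniz reindexing $\sum_\pi\sum_{\{a,b\}\in\pi}=\sum_{a<b}\sum_{\pi\ni\{a,b\}}$ combined with the symmetry of $\sigma$, and handles the $O(1/N)$ rate via the inductive a priori bound $m^N_t$ on $m<k$ slots. The tradeoff is that you need the algebraic verification that $W^{(t)}$ solves the flow (which the paper avoids by computing the leading Duhamel term explicitly), and your Step 4 relies on a bootstrapping argument (Gr\"onwall first giving uniform boundedness of $m^N_t$ in $k$ slots, then giving the rate) that should be spelled out.

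One genuine slip: your parenthetical definition of $\sigma^{(t)}_{\mx{T}}$ as ``replace the upper integration limit $1$ by $t$'' is incorrect and does not produce the ODE you then claim. With only the upper limit changed,
\[
\frac{d}{dt}\,2\!\int_0^t\bigl[e^{s\D}\Gamma(e^{(1-s)\D}P,e^{(1-s)\D}Q)\bigr](\mx{1})\,ds = 2\bigl[e^{t\D}\Gamma(e^{(1-t)\D}P,e^{(1-t)\D}Q)\bigr](\mx{1}),
\]
which is not of the form $2\gamma_{PQ}(t)+\sigma^{(t)}(\D P,Q)+\sigma^{(t)}(P,\D Q)$. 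The object you actually need is $\sigma^{(t)}_{\mx{T}}=\sigma_{t\mx{T}}$, which after the change of variables $u=st$ reads
\[
\sigma_{t\mx{T}}(P,Q) = 2\int_0^t \bigl[e^{u\D^{\mx{T}}}\Gamma^{\mx{T}}\bigl(e^{(t-u)\D^{\mx{T}}}P,\,e^{(t-u)\D^{\mx{T}}}Q\bigr)\bigr](\mx{1})\,du,
\]
i.e.\ both occurrences of $1$ must be replaced by $t$, not just the integration limit. Differentiating this in $t$ (Leibniz on the upper limit plus the $t$-dependence inside) does give precisely the claimed relation $\frac{d}{dt}\sigma_{t\mx{T}}(P,Q)=2\gamma_{PQ}(t)+\sigma_{t\mx{T}}(\D^{\mx{T}}P,Q)+\sigma_{t\mx{T}}(P,\D^{\mx{T}}Q)$, and then your Step~3 combinatorial identity and the Gr\"onwall closure go through as outlined. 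With that correction the proposal is sound.
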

Theorem \ref{main} is proved in the next section.  We will first reformulate this result as convergence towards a Gaussian field.
\begin{lemma} \label{l.Wick} There exists a complex Gaussian Hilbert space $K$ (cf.\ \cite{Janson1997}) with some specified random variables $(\xi_P)_{P\in \PP}\in K$ such that $P\mapsto \xi_P$ is linear, $\E(\xi_P\xi_Q)=\sigma_{\mx{T}}(P,Q)$ and $\overline{\xi_P}=\xi_{P^*}$.
\end{lemma}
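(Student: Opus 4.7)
The plan is to construct the Gaussian Hilbert space abstractly from the prescribed covariance, with positivity supplied by Theorem \ref{main}. The central object is the Hermitian form $H(P, Q) := \sigma_{\mx{T}}(P, Q^*)$ on $\PP(J) \times \PP(J)$; the bulk of the work is to show that $H$ is nonnegative and Hermitian.

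First I would record that $\sigma_{\mx{T}}$ is $\C$-bilinear and symmetric, inherited directly from the symmetry of $\Gamma^{\mx{T}}$. Next, observing that $P \mapsto X_P^N$ is $\C$-linear and that $X_{P^*}^N = \overline{X_P^N}$ (because $[P^*]_N = \overline{[P]_N}$), we have $\E(X_P^N X_{P^*}^N) = \E(|X_P^N|^2) \ge 0$ for every $N$. Passing to the limit via Theorem \ref{main} with $k=2$ yields $H(P,P) = \sigma_{\mx{T}}(P, P^*) \ge 0$. Hermiticity of $H$ then reduces, using symmetry of $\sigma_{\mx{T}}$, to the identity $\sigma_{\mx{T}}(P^*, Q^*) = \overline{\sigma_{\mx{T}}(P,Q)}$, which I would obtain from the same limit description together with $X_{P^*}^N = \overline{X_P^N}$.

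With $H$ in hand, I would construct the family via the standard real-to-complex detour. Let $\PP(J)_{\R} := \{P \in \PP(J) : P^* = P\}$, so that $\PP(J) = \PP(J)_\R \oplus i\PP(J)_\R$ as real vector spaces. The restriction of $\sigma_{\mx{T}}$ to $\PP(J)_\R \times \PP(J)_\R$ is a real symmetric nonnegative bilinear form, so quotienting by its kernel and completing produces a real Hilbert space carrying a centered isonormal Gaussian family $(\eta_P)_{P \in \PP(J)_\R}$ with $\E(\eta_P \eta_Q) = \sigma_{\mx{T}}(P, Q)$ on some probability space (see for instance \cite{Janson1997}). Defining $\xi_{P_1 + iP_2} := \eta_{P_1} + i\eta_{P_2}$ for $P_1, P_2 \in \PP(J)_\R$ gives a $\C$-linear map $P \mapsto \xi_P$ with $\overline{\xi_P} = \xi_{P^*}$ by construction, and a short bilinear expansion using $\C$-bilinearity of $\sigma_{\mx{T}}$ confirms $\E(\xi_P \xi_Q) = \sigma_{\mx{T}}(P, Q)$. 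The space $K$ is the closed complex linear span of $(\xi_P)_P$ in $L^2$.

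The main (mild) obstacle is the simultaneous consistency of the pseudo-covariance $\E(\xi_P \xi_Q) = \sigma_{\mx{T}}(P,Q)$ with the reality constraint $\overline{\xi_P} = \xi_{P^*}$; this is a genuine compatibility condition that can fail for arbitrary covariances, but it is precisely encoded by the Hermiticity of $H$ established in the first step. Alternatively, one can bypass the explicit construction altogether by invoking the method of moments: Theorem \ref{main} shows that the joint moments of $(X_P^N)_P$ converge to the Wick moments associated to $\sigma_{\mx{T}}$, which are Gaussian moments and hence distribution-determining, so $(\xi_P)_P$ exists as the distributional limit and automatically inherits the reality $\overline{\xi_P} = \xi_{P^*}$.
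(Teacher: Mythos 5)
Your proof is correct and follows the same strategy as the paper's: restrict $\sigma_{\mx{T}}$ to self-adjoint elements, obtain a real Gaussian Hilbert space realizing it there, and then complexify via $\xi_{P_1+iP_2}=\eta_{P_1}+i\eta_{P_2}$ (which is precisely the paper's $\xi_P=\xi_{(P+P^*)/2}+i\xi_{(P-P^*)/2i}$). The one place you add useful content the paper elides is the explicit justification, via Theorem~\ref{main} with $k=2$ and the relation $X^N_{P^*}=\overline{X^N_P}$, that $\sigma_{\mx{T}}$ is real-valued and nonnegative on $\PP_{sa}$; the paper asserts this without proof, and this is precisely the compatibility needed for the real Gaussian Hilbert space construction to apply.
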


\begin{proof}Firstly, the map $ \sigma_{\mx{T}}$ is symmetric, non-negative and bilinear on the subspace $\PP_{sa}$ of self-adjoint elements of $\PP(J)$, and therefore there exists a real Gaussian Hilbert space $H$ and a linear map $P\mapsto \xi_P$ from $\PP_{sa}$ to $H$ such that $\E(\xi_P\xi_Q)= \sigma_{\mx{T}}(P,Q)$. Let $K=H_{\mathbb{C}}$, the complexification of $H$. For all $P\in \PP$, we set $\xi_P=\xi_{(P+P^*)/2}+i\xi_{(P-P^*)/2i}$ which is linear in $P$. By bilinearity of $ \sigma_{\mx{T}}$, $\E(\xi_P\xi_Q)= \sigma_{\mx{T}}(P,Q)$. Finally,
\[ \overline{\xi_P}=\xi_{(P+P^*)/2}-i\xi_{(P-P^*)/2i}=\xi_{(P^*+P)/2}+i\xi_{(P^*-P)/2i}=\xi_{P^*}.\qedhere \]
\end{proof}

\begin{corollary} \label{c.main} As $N\to\infty$, $(X^N_P)_{P\in \PP(J)}$ converges to $(\xi_P)_{P \in \PP(J)}$ in finite-dimensional distribution: for all $P_1,\ldots,P_k\in \PP(J)$,
$$(X^N_{P_1},\ldots , X^N_{P_k})\xrightarrow[N\rightarrow\infty]{(d)} (\xi_{P_1},\ldots , \xi_{P_k}) .$$
Otherwise stated, in the dual space $\PP(J)^*$ endowed with the topology of pointwise convergence, the random linear map $X^N:P\mapsto X^N_P$ converge to the random linear map $\xi:P\mapsto \xi_P$ in distribution:
$$X^N \xrightarrow[N\rightarrow\infty]{(d)} \xi .$$
\end{corollary}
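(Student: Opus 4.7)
The plan is to deduce this corollary directly from Theorem \ref{main} via the method of moments. The first observation is that since $[P^{\ast}]_N = \overline{[P]_N}$, we have $\overline{X^N_P} = X^N_{P^{\ast}}$, and by Lemma \ref{l.Wick} we also have $\overline{\xi_P} = \xi_{P^{\ast}}$. Hence for any $P_1,\ldots,P_k \in \PP(J)$ and any multi-indices $(a_j),(b_j) \in \N^k$, the joint moment in coordinates and conjugates
\[
\E\Bigl(\prod_{j=1}^k (X^N_{P_j})^{a_j} (\overline{X^N_{P_j}})^{b_j}\Bigr) = \E\Bigl(\prod_{j=1}^k (X^N_{P_j})^{a_j} (X^N_{P_j^{\ast}})^{b_j}\Bigr)
\]
is a mixed moment of the enlarged family $\{P_1,\ldots,P_k,P_1^{\ast},\ldots,P_k^{\ast}\}$ without conjugates, to which Theorem \ref{main} applies and yields convergence to $\sum_{\pi} \prod_{\{i,j\}\in\pi} \sigma_{\mx{T}}(Q_i,Q_j)$ with error $O(1/N)$, where $(Q_l)$ enumerates the enlarged family with the correct multiplicities.

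The next step is to identify this limit with the corresponding moment of $(\xi_{P_j})$. For any centered Gaussian family $(\xi_{Q_l})_{l=1}^m$ with covariance $\E(\xi_{Q_l}\xi_{Q_{l'}}) = \sigma_{\mx{T}}(Q_l,Q_{l'})$, the Isserlis/Wick formula
\[
\E(\xi_{Q_1}\cdots\xi_{Q_m}) = \sum_{\pi \in \mathcal{P}_2(m)} \prod_{\{l,l'\}\in\pi} \sigma_{\mx{T}}(Q_l,Q_{l'})
\]
holds; this is the usual real Wick formula applied to the real and imaginary parts after expansion, so it holds for our complex Gaussian family equally well (the relevant covariance is the symmetric bilinear $\E(\xi\eta)$, not the sesquilinear $\E(\xi\overline{\eta})$). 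Matching this identity against Theorem \ref{main} shows that every mixed moment of $(X^N_{P_1},\ldots,X^N_{P_k})$ in variables and conjugates converges to the corresponding mixed moment of $(\xi_{P_1},\ldots,\xi_{P_k})$ as $N\to\infty$.

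Finally, I would invoke the method of moments on $\C^k$: a centered complex Gaussian vector is moment-determinate (its moments satisfy Carleman's condition), so joint convergence of all polynomial moments in coordinates and conjugates implies convergence in distribution. This gives the finite-dimensional convergence statement, and since the topology of pointwise convergence on $\PP(J)^{\ast}$ is by definition convergence on every finite set of test elements, the second formulation follows immediately. There is no real obstacle here beyond bookkeeping; the only point requiring care is the reduction of antiholomorphic moments to holomorphic ones via $\overline{X^N_P} = X^N_{P^{\ast}}$ and $\overline{\xi_P} = \xi_{P^{\ast}}$, ensuring that Theorem \ref{main} supplies \emph{all} the moment information needed to pin down the complex Gaussian limit.
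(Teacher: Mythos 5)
Your proposal is correct and follows essentially the same route the paper takes: reduce to mixed $\ast$-moments via $\overline{X^N_P}=X^N_{P^\ast}$ and $\overline{\xi_P}=\xi_{P^\ast}$ (from Lemma \ref{l.Wick}), invoke Theorem \ref{main} on the enlarged family, match against the Wick formula, and conclude by moment-determinateness of the Gaussian limit. The only cosmetic difference is that you make the Wick/Isserlis identification explicit where the paper simply states that the Gaussian limit makes the method of moments applicable; both are the same argument.
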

\noindent Note that, for $P$ and $Q$ in $\PP(J)$, the asymptotic covariance of $X^N_P$ and $X^N_Q$, or equivalently the covariance of $\xi_P$ and $\xi_Q$, is $\E(\xi_P\overline{\xi_Q})=\E(\xi_P\xi_{Q^*})=\sigma_{\mx{T}}(P,Q^*)$, which is different from $\sigma_{\mx{T}}(P,Q)$.

\begin{proof}Let $k\in \N$ and $P_1,\ldots,P_k\in\PP(J)$. Because the vector $(\xi_{P_1},\ldots,\xi_{P_k})$ is Gaussian, it suffices to prove the convergence of the $*$-moments of $(X^N_{P_1},\ldots, X^N_{P_k})$ to those of $(\xi_{P_1},\ldots,\xi_{P_k})$.
Let $1\leq i_1,\ldots, i_n,j_1,\ldots, j_m \leq k$. We want to prove that
$$\E(X^N_{i_1}\cdots X^N_{i_n}\overline{X^N_{j_1}}\cdots \overline{X^N_{j_n}})\xrightarrow[N\rightarrow\infty]{} \E(\xi_{P_{i_1}}\cdots \xi_{P_{i_n}}\overline{\xi_{P_{j_1}}}\cdots \overline{\xi_{P_{j_n}}}).$$
We have
 \begin{multline*} \E(X^N_{P_{i_1}}\cdots X^N_{P_{i_n}}\overline{X^N_{P_{j_1}}}\cdots \overline{X^N_{P_{j_m}}})= \E(X^N_{P_{i_1}}\cdots X^N_{P_{i_n}}X^N_{P_{j_1}^*}\cdots X^N_{P_{j_m}^*})\\
 \xrightarrow[N\rightarrow\infty]{} \E(\xi_{P_{i_1}}\cdots \xi_{P_{i_n}}\xi_{P_{j_1}^*}\cdots \xi_{P_{j_n}^*})
=\E(\xi_{P_{i_1}}\cdots \xi_{P_{i_n}}\overline{\xi_{P_{j_1}}}\cdots \overline{\xi_{P_{j_n}}}).
 \end{multline*}
 The equivalent formulation of the convergence in distribution follows because $\PP(J)$ is a countable-dimensional metric space.
\end{proof}

\subsection{Proof of Theorem~\ref{main}} \label{s.main.proof}

Observing that $(P_1,\ldots, P_k)\mapsto \E(X^N_{P_1}\cdots X^N_{P_k}) $ and $(P_1,\ldots, P_k)\mapsto\sum_{\pi\in\mathcal{P}_2(k)}\prod_{(i,j)\in\pi}  \sigma_{\mx{T}}(P_i,P_j) $ are symmetric multilinear forms on $\PP(J)$, it suffices by polarization to verify the asymptotic when $P_1=\cdots=P_k=P$ (cf.\ \cite[Appendix D]{Janson1997}). In this case, set $Q_N=P-\E[P(B^N(\mx{T}))]$.  (Note that $Q_N$ is an element of the abstract space $\PP(J)$; it should not be confused with the notation $[Q]_N$ for evaluation as a trace polynomial function on $\M_N$.)  We want to prove that
\[ N^k \E([Q_N^k]_N(B^N(\mx{T}))) = \sum_{\pi\in\mathcal{P}_2(k)}\prod_{(i,j)\in\pi}  \sigma_{\mx{T}}(P,P) + O\left(\frac{1}{N}\right). \]
To begin, we remark that 
\[ \E(Q_N^k(B^N(\mx{T})))= \left[e^{\D^{\mx{T}}+\frac{1}{N^2}\L^{\mx{T}}}(Q_N^k)\right](\mx{1}), \]
thanks to Corollary~\ref{exbt}. The proof will consist in identifying the leading term in the development of $e^{\D^{\mx{T}}+\frac{1}{N^2}\L^{\mx{T}}}$ with respect to $N$.

\paragraph*{Appropriate norms.} In order to control the negligible terms in the development, we will work on finite dimensional spaces. Let $d\in \N$ be the degree of $Q_N$ (which is independent of $N$). The subalgebra $\PP_{kd}$ of elements of $\PP(J)$ whose degrees are $\le kd$ is finite dimensional and we endow it with some fixed unital algebra norm $\|\cdot\|_{(kd)}$. Let us denote by $ \vertiii{\,\cdot\,}_{(kd)}$ the induced operator norm on the finite dimensional algebra $\End(\PP_{kd})$, and by $\vertiii{\,\cdot\,}_{(d,d')}$ the induced norm of bilinear maps from $\PP_{d}\times \PP_{d'}$ to $\PP_{d+d'}$ when $d+d'\leq kd$ (in the following development, we will often omit the indices $(kd)$ or $(d,d')$). Throughout this proof, we will denote by $D$, $L$ and $\Gamma$ the operators $\D^{\mx{T}}$, $\L^{\mx{T}}$ and $\Gamma^{\mx{T}}$ restricted to the finite dimensional algebra $\PP_{kd}$. Let us denote by respectively $O(1/N^2)$, $\textsc{O}(1/N^2)$ and $\mx{O}(1/N^2)$ the class of elements $A(N)$ in respectively $\C$, $\PP_{kd}$ and $\End(\PP_{kd})$ such that $|A(N)|$ (resp. $\|A(N)\|_{(kd)}$ and $\vertiii{A(N)}_{(kd)}$) is $\leq C/N^2$ for some constant $C$.
% The differentiability of the exponential map leads to
% $e^{D+\frac{1}{N^2}L}=e^D+O(1/N^2)$. More precisely, w
 We have the following result.
% Since the map $A\mapsto [A(P)](\mx{1})$ is linear, it is therefore bounded and we have $\left[e^{D+\frac{1}{N^2}}(P)\right](\mx{1})=[e^D(P)](\mx{1})+O(1/N^2)$ and $Q_N=P-e^DP(\mx{1})+O(1/N^2)$.

%Let $Q_N(P_j) = P_j - \E(P_j(B_t^N))$. Then
%$\E(X_1^N\cdots X_k^N) = N^k \left(e^{D+\frac{1}{N^2}L}(Q_N(P_j)\cdots Q_N(P_j))\right)(\mx{1}).$ Observing that $P_1,\ldots, P_k\mapsto N^k \left(e^{D+\frac{1}{N^2}L}(Q^N(P_j)\cdots Q_N(P_j))\right)(\mx{1})$ and $P_1,\ldots, P_k\mapsto\sum_{\pi\in\mathcal{P}_2(k)}\prod_{(i,j)\in\pi}  \sigma_{\mx{T}}(P_i,P_j) $ are symmetric multilinear forms on $\PP$, it suffices by polarization to verify the convergence when $P_1=\ldots=P_k=P$. In this case, let $Q_N=P - \E(P(B_t^N))=P -e^{t(D+\frac{1}{N^2}L)}P(\mx{1})$. We want to prove that
%$$N^k \left(e^{D+\frac{1}{N^2}L}(Q_N^k)\right)(\mx{1}) =\sum_{\pi\in\mathcal{P}_2(k)}\prod_{(i,j)\in\pi}  \sigma_{\mx{T}}(P,P)+O(1/N).$$

\begin{lemma} \label{l.Driver}
For all $t\geq 0$, we have\label{dev}
\begin{equation}e^{t(D+\frac{1}{N^2}L)}=e^{tD}+\frac{1}{N^2}\int_0^t e^{t_1(D+\frac{1}{N^2}L)}Le^{(t-t_1)D}\,dt_1.\end{equation}
In particular, $e^{D+\frac{1}{N^2}L}=e^D+\mx{O}(1/N^2).$ More generally, for all $k \in \N$, we have
 \begin{align*}e^{t(D+\frac{1}{N^2}L)}=&e^{tD}+\sum_{n=1}^{k}\frac{1}{N^{2n}} \int_{0\leq t_n\leq \cdots \leq t_1\leq t} e^{t_nD}Le^{(t_{n+1}-t_n)D}L\cdots Le^{(t-t_1)D}\,dt_1\cdots dt_{n}\\
 &+\frac{1}{N^{2(k+1)}}\int_{0\leq t_{k+1}\leq \cdots \leq t_1\leq t}e^{t_{k+1}(D+\frac{1}{N^2}L)}Le^{(t_{k+1}-t_k)D}L\cdots Le^{(t-t_1)D}\,dt_1\cdots dt_{k+1}.
  \end{align*}
\end{lemma}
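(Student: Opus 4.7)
The proof is a standard Duhamel / variation-of-parameters argument, carried out on the finite-dimensional invariant subspace $\PP_{kd}$ where all relevant operator norms are finite.

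The plan is to introduce the interpolating operator-valued function
$$f(s) = e^{s(D+\frac{1}{N^2}L)}\,e^{(t-s)D}, \qquad s\in[0,t],$$
which satisfies $f(0)=e^{tD}$ and $f(t) = e^{t(D+\frac{1}{N^2}L)}$. Differentiating in $s$ and using that each generator commutes with its own semigroup (so that $D$ may be pulled past $e^{(t-s)D}$ and $D+\frac{1}{N^2}L$ past $e^{s(D+\frac{1}{N^2}L)}$), the two pure-$D$ contributions cancel and one is left with
$$f'(s) = \frac{1}{N^2}\,e^{s(D+\frac{1}{N^2}L)}\,L\,e^{(t-s)D}.$$
Integrating from $0$ to $t$ gives the first displayed equation of the lemma.

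For the estimate $e^{D+\frac{1}{N^2}L}=e^D+\mx{O}(1/N^2)$, I would bound operator norms on $\PP_{kd}$. Since $\PP_{kd}$ is finite dimensional and preserved by both $D$ and $L$, we have $\vertiii{e^{sD}}\leq e^{s\vertiii{D}}$ and $\vertiii{e^{s(D+\frac{1}{N^2}L)}} \leq e^{s(\vertiii{D} + \vertiii{L}/N^2)}$, the latter uniformly bounded for $s\in[0,1]$ and $N\geq 1$. Hence the integrand in the right-hand side of the first identity has operator norm bounded by a constant depending only on $\vertiii{D}$ and $\vertiii{L}$, and the prefactor $\frac{1}{N^2}$ yields the claim.

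The higher-order expansion follows by iterating the first-order formula: apply it (with $t_1$ in place of $t$) to rewrite the $e^{t_1(D+\frac{1}{N^2}L)}$ factor inside the integrand, introducing a new integration variable $t_2 \in [0,t_1]$, and then repeat. After $k$ steps this produces exactly the telescoping sum of iterated integrals displayed in the statement, together with a single remainder term in which the full semigroup $e^{t_{k+1}(D+\frac{1}{N^2}L)}$ is left unexpanded; a clean induction on $k$ wraps this up. No step presents a genuine obstacle: the only mildly delicate point is the consistent restriction to $\PP_{kd}$, which guarantees that all exponentials are bounded operators and that the interchange of differentiation, integration, and operator composition is fully justified.
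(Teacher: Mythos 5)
Your proof is correct and follows essentially the same Duhamel/variation-of-parameters argument as the paper: the paper sets $S(\theta)=e^{\theta(D+\frac{1}{N^2}L)}e^{-\theta D}$, differentiates, integrates, and multiplies by $e^{tD}$ on the right, which is algebraically identical to differentiating your $f(s)=e^{s(D+\frac{1}{N^2}L)}e^{(t-s)D}$ and integrating. The norm estimate on $\PP_{kd}$ and the induction for the higher-order expansion also match the paper's reasoning.
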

\begin{proof}Let us define $S(t)=e^{t(D+\frac{1}{N^2}L)}e^{-tD}$; then $S$ is differentiable, and
\[S'(t)=e^{t(D+\frac{1}{N^2}L)}(D+\frac{1}{N^2}L-D)e^{-tD}=\frac{1}{N^2}S(t)e^{tD}Le^{-tD}. \] 
Since $S(0)=I_N$, it follows that $S(t)=1+\frac{1}{N^2} \int_0^t S(\theta)e^{\theta D}Le^{-\theta D}\,d\theta$. Multiplying by $e^{tD}$ on the right gives us the first formula. We can then compute
$$\vertiii{e^{D+\frac{1}{N^2}L}-e^D}\leq \frac{1}{N^2}\int_0^1 \vertiii{e^{t_1(D+\frac{1}{N^2}L)}Le^{(1-t_1)D}}\,dt\leq \frac{1}{N^2}e^{2\vertiii{D}+\vertiii{L}}\vertiii{L}.$$
The last formula is obtained by induction over $k$, using at each step the first formula. \end{proof}

\begin{remark} The first formula is often called {\em Duhamel's formula}: for any operators $A,B$ on some finite dimensional vector space $V$,
\[ e^A - e^B = \int_0^1 e^{sA}(A-B)e^{(1-s)B}\,ds. \]
The proof is the same as given above in the case $t=1$ (with $A=D+\frac{1}{N^2}L$ and $B=D$); conversely, the general case follows from Duhamel's formula by a simple change of variables.
\end{remark}

For $n\in\N$, let us denote by $\Delta_n\subset\R^n$ the simplex
\[ \Delta_n = \{(t_1,\ldots,t_n)\in\R^n\colon 0\le t_{n}\le t_{n-1}\le \cdots \le t_{1}\le 1\}. \]
Using Lemma \ref{l.Driver} at step $[k/2]$, the study of the limit of
$N^k \left[e^{D+\frac{1}{N^2}L}(Q_N^k)\right](\mx{1})$ is decomposed into the study of the limits of:
\begin{enumerate}
\item $N^k\left[ e^D(Q_N^k)\right](\mx{1})$,
\item $N^{k-2n} \left[\int_{\Delta_n} e^{t_nD}Le^{(t_{n+1}-t_n)D}L\cdots Le^{(1-t_1)D}\,dt_1\cdots dt_{n}(Q_N^k)\right](\mx{1})$ for $1\leq n \leq  [k/2]$, and
%\item $N^{k-2n}\int_{0\leq t_1\leq \cdots \leq t_{n}\leq 1} e^{t_1D}Le^{(t_2-t_1)D}L\cdots Le^{(1-t_{n})D}\,dt_1\cdots dt_{n}(Q_N^k)(\mx{1})$ if $ n =  [k/2]$
\item $N^{k-2-[k/2]}\left[\int_{\Delta_{k+1}}e^{t_{k+1}(D+\frac{1}{N^2}L)}Le^{(t_{k+1}-t_k)D}L\cdots Le^{(1-t_1)D}\,dt_1\cdots dt_{k+1}(Q_N^k)\right](\mx{1})$,
\end{enumerate}
which we address separately in the following three steps. In the fourth step, we sum up the three convergences to  conclude the proof.  We will see that the only term which does not vanish is the second term considered when $n=[k/2]$.

\paragraph*{Step 1}Since the map $A\mapsto [A(P)](\mx{1})$ is linear on $\End(\PP_{kd})$, it is therefore bounded and we deduce$$\left[e^{D+\frac{1}{N^2}L}(P)\right](\mx{1})=[e^D(P)](\mx{1})+O(1/N^2)$$from $e^{D+\frac{1}{N^2}L}=e^D+\mx{O}(1/N^2).$ But we have $Q_N=P-\E[P(B^N(\mx{T}))]=P-\left[e^{D+\frac{1}{N^2}L}(P)\right](\mx{1})$ thanks to Corollary~\ref{exbt}. Consequently, \begin{equation}
Q_N=P-[e^D(P)](\mx{1})+\textsc O(1/N^2)\label{qn}
\end{equation} and therefore $Q_N^k=(P-[e^D(P)](\mx{1}))^k+\textsc O(1/N^{2k})$. Since $D$ satisfies the product rule, we deduce from a standard formal power series argument that $e^{D}$ is an algebra homomorphism. Thus \begin{align*}
[e^D(Q_N^k)](\mx{1})&=\left[e^D(P-[e^D(P)](\mx{1}))^k\right](\mx{1})+O(1/N^{2k})\\&=(\left[e^D(P)(\mx{1})-[e^D(P)](\mx{1})\right]^k)(\mx{1})+O(1/N^{2k})\\&=([e^D(P)](\mx{1})-[e^D(P)](\mx{1}))^k+O(1/N^{2k})\\&=O(1/N^{2k}).
\end{align*}
Finally, $N^k\left[ e^D(Q_N^k)\right](\mx{1})=O(1/N^k)$.
\paragraph*{Step 2}We are assuming at this step that $2\leq k$.
%Let us compute for fixed $0\leq t_1\leq \cdots \leq t_{n}\leq 1$ the term $e^{t_1D}Le^{(t_2-t_1)D}L\cdots Le^{(1-t_{n})D}(Q_N^k)$.
For all $R\in \PP(J)$, $t\geq 0$ and $n\geq 2$, we have by Lemma~\ref{landg}\begin{align*} L((e^{tD}(Q_N))^n\cdot R)=&(e^{tD}Q_N)^nL(R)+2n(e^{tD}Q_N)^{n-1}\Gamma(e^{tD}Q_N,R)\\&
+n(e^{tD}Q_N)^{n-1}L(e^{tD}Q_N)R+n(n-1)(e^{tD}Q_N)^{n-2}\Gamma(e^{tD}Q_N,e^{tD}Q_N)R.\end{align*} In others words, for all $d'\leq (k-1)d$, if we define the bilinear map $B_n:(S,R)\mapsto S\cdot L(R)+2n\Gamma(S,R)+nL(S)\cdot R$ from $\PP_d\times \PP_{d'}$ to $\PP_{d+d'}$, we have, for all $R\in \PP_{d'}$, \begin{equation}L((e^{tD}(Q_N))^n\cdot R)=(e^{tD}Q_N)^{n-1}B_n(e^{tD}Q,R)+n(n-1)(e^{tD}Q_N)^{n-2}\Gamma(e^{tD}Q_N,e^{tD}Q_N)R.\label{LR}\end{equation}
Let us denote by $\Gamma(t)$ the element $e^{tD}\Gamma( e^{(1-t)D}Q_N, e^{(1-t)D}Q_N)\in \PP_{2d}$. Using~\eqref{LR}, we prove by induction on $n$ the following lemma.
\begin{lemma} For all $n$ such that $1\leq n \leq [k/2]$ and $0\leq t_n\leq \cdots \leq t_0= 1$, there exists $R_n\in \PP_{(2n-1)d}$ bounded independently of $N,t_0,\ldots t_n$ such that
\begin{multline}Le^{(t_{n-1}-t_n)D}L\cdots L e^{(1-t_1)D}(Q^k_N)\\
=\frac{k!}{(k-2n)!}(e^{(1-t_n)D}Q_N)^{k-2} e^{-t_nD}(\Gamma(t_1)\cdots \Gamma(t_n))+(e^{(1-t_n)D}Q_N)^{k-2n+1} R_n.\label{ind}
\end{multline}
\end{lemma}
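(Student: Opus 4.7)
My plan is to prove the identity by induction on $n$, with equation~(\ref{LR}) playing the role of the workhorse at each step. Two structural facts will be used throughout: because $D$ satisfies the product rule, the exponential $e^{sD}$ is a unital algebra homomorphism on $\PP(J)$ for every $s\in\R$; and because $L$ is second-order, $L(1)=0$ and $\Gamma(\,\cdot\,,1)=0$. The overall shape to keep in mind is that the level-$n$ formula consists of a ``main term'' with combinatorial prefactor $k!/(k-2n)!$ and $k-2n$ copies of $e^{(1-t_n)D}Q_N$, plus a remainder carrying one extra copy of that factor; each inductive step converts one of those extra copies into a new $\Gamma(t_{n+1})$ inside the main term.

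For the base case $n=1$, I would apply~(\ref{LR}) to $(e^{(1-t_1)D}Q_N)^k$ with $R=1$ and the exponent set to $k$. Since $B_k(S,1)=S\,L(1)+2k\,\Gamma(S,1)+k\,L(S)=kL(S)$, and $\Gamma(e^{(1-t_1)D}Q_N,e^{(1-t_1)D}Q_N)=e^{-t_1 D}\Gamma(t_1)$ by the very definition of $\Gamma(t_1)$, the identity yields the level-$1$ formula with $R_1=k\,L(e^{(1-t_1)D}Q_N)\in\PP_d$.

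For the inductive step I would apply $L\,e^{(t_n-t_{n+1})D}$ to the level-$n$ formula. The outer $e^{(t_n-t_{n+1})D}$, being a homomorphism that composes cleanly with the existing exponentials, simply shifts $e^{(1-t_n)D}Q_N\mapsto e^{(1-t_{n+1})D}Q_N$ and $e^{-t_n D}\mapsto e^{-t_{n+1}D}$, while leaving the $\Gamma(t_i)$, $i\le n$, intact inside the product. Applying~(\ref{LR}) to the shifted main summand, the $\Gamma$-term in~(\ref{LR}) supplies both the multiplier $(k-2n)(k-2n-1)$ and a fresh $\Gamma(e^{(1-t_{n+1})D}Q_N,e^{(1-t_{n+1})D}Q_N)=e^{-t_{n+1}D}\Gamma(t_{n+1})$. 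Combined with the existing $k!/(k-2n)!$ and the homomorphism property of $e^{-t_{n+1}D}$, this assembles precisely into $\frac{k!}{(k-2(n+1))!}(e^{(1-t_{n+1})D}Q_N)^{k-2(n+1)}\,e^{-t_{n+1}D}(\Gamma(t_1)\cdots\Gamma(t_{n+1}))$. Every remaining contribution --- the $B_{k-2n}$-term from the main branch and both terms produced by~(\ref{LR}) when $L$ acts on the shifted remainder $(e^{(1-t_{n+1})D}Q_N)^{k-2n+1}\cdot e^{(t_n-t_{n+1})D}R_n$ --- carries at least $(e^{(1-t_{n+1})D}Q_N)^{k-2(n+1)+1}$ as a factor, after extracting one extra copy from the piece whose exponent is $k-2(n+1)+2$, and the collected cofactors assemble into $R_{n+1}$.

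The main obstacle will be the bookkeeping: verifying that every remainder piece really does retain the prescribed power of $e^{(1-t_{n+1})D}Q_N$, that the cofactor lies in $\PP_{(2n+1)d}=\PP_{(2(n+1)-1)d}$, and that $R_{n+1}$ admits a bound uniform in $N$ and in $(t_1,\ldots,t_{n+1})\in\Delta_{n+1}$. Uniformity is the easy half: on the finite-dimensional algebra $\PP_{kd}$ equipped with the fixed norm $\|\cdot\|_{(kd)}$, the operators $D$, $L$, $\Gamma$, and the family $\{e^{sD}\}_{s\in[0,1]}$ are all bounded by $N$-independent constants. The degree and power tracking across the several produced terms is what requires genuine care.
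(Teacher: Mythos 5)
Your proposal is correct and follows essentially the same route as the paper: an induction on $n$ that applies $L\,e^{(\cdot)D}$ to the previous level, uses equation~\eqref{LR} to split the new application of $L$ into a $\Gamma$-term (which becomes the fresh $\Gamma(t_{n+1})$ in the main branch) and $B$-terms (which are absorbed into the remainder), and exploits that $e^{sD}$ is an algebra homomorphism to shift all the existing factors uniformly. The paper phrases the induction as $n-1\to n$ and you phrase it as $n\to n+1$, but the content — including the base case via $B_k(S,1)=kL(S)$, the combinatorial bookkeeping $(k-2n)(k-2n-1)\cdot\frac{k!}{(k-2n)!}=\frac{k!}{(k-2(n+1))!}$, the degree count placing $R_n$ in $\PP_{(2n-1)d}$, and the appeal to $N$-independent operator norms on the fixed finite-dimensional algebra $\PP_{kd}$ together with the bound~\eqref{boundQN} on $\|Q_N\|$ — is the same.
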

\begin{proof}
Indeed, when $n=1$, setting $R_1=k L (e^{(1-t_1)D}Q_N)\in \PP_d$, we have$$ L e^{(1-t_1)D}(Q^k_N)
=k(k-1)(e^{(1-t_1)D}Q_N)^{k-2n} \Gamma( e^{(1-t_1)D}Q_N, e^{(1-t_1)D}Q_N)+(e^{(1-t_1)D}Q_N)^{k-1} R_1.
$$
Remark that $\|R_1\|\leq k\vertiii{L}e^{\vertiii{D}}\|Q_N\|$. Because of \eqref{qn},
\begin{equation}\|Q_N\|\leq \|P-[e^D(P)](\mx{1})\|+\|Q_N-P+[e^D(P)](\mx{1})\|=\|P-[e^D(P)](\mx{1})\|+O(1/N^2).\label{boundQN}\end{equation} Therefore, $Q_N$ is bounded independently of $N$, and so too is $R_1$.  Assume now that $2\leq n \leq [k/2]$ and that \eqref{ind} has been verified up to level $n-1$. We compute
\begin{align*}&\hspace{-0.5cm}L e^{(t_{n-1}-t_n)D}L \cdots L  e^{(1-t_1)D}(Q^k_N)\\
=&L e^{(t_{n-1}-t_n)D}\left(\frac{k!}{(k-2n+2)!}(e^{(1-t_{n-1})D}Q_N)^{k-2n+2} e^{-t_{n-1}D}(\Gamma(t_1)\cdots \Gamma(t_{n-1}))\right.\\
&\hspace{9cm}\left.\vphantom{\frac{k!}{(k-2n+2)!}}+(e^{(1-t_{n-1})D}Q_N)^{k-2n+3} R_{n-1}\right)\\
=&\frac{k!}{(k-2n+2)!}L \left((e^{(1-t_{n})D}Q_N)^{k-2n+2} e^{-t_{n}D}(\Gamma(t_1)\cdots \Gamma(t_{n-1}))\right)\\
&\hspace{9cm}+L \left((e^{(1-t_{n})D}Q_N)^{k-2n+3} R_{n-1}\right).
\end{align*}
We use now \eqref{LR} on each term. The first term leads to
\begin{align*}&\frac{k!}{(k-2n)!}(e^{(1-t_n)D}Q_N)^{k-2n} e^{-t_nD}(\Gamma(t_1)\cdots \Gamma(t_n))\\
+&\frac{k!}{(k-2n+2)!}(e^{(1-t_{n})D}Q_N)^{k-2n+1} B_{k-2n+2}\left(e^{(1-t_{n})D}Q_N,e^{-t_{n}D}(\Gamma(t_1)\cdots \Gamma(t_{n-1}))\right) ,
\end{align*}
and the second term to
\begin{align*}&(e^{(1-t_{n})D}Q_N)^{k-2n+2}B_{k-2n+3}(e^{(1-t_{n})D}Q_N,R_{n-1})\\
+&(k-2n+3)(k-2n+2)(e^{(1-t_{n})D}Q_N)^{k-2n+1}\Gamma(e^{(1-t_{n})D}Q_N,e^{(1-t_{n})D}Q_N)R_{n-1}.
\end{align*}
Thus, $R_n\in \PP_{(2n-1)d}$ can be defined by
\begin{align*}R_n &\equiv \frac{k!}{(k-2n+2)!} B_{k-2n+2}\left(e^{(1-t_{n})D}Q_N,e^{-t_{n}D}(\Gamma(t_1)\cdots \Gamma(t_{n-1})\right)\\
&\qquad +(e^{(1-t_{n})D}Q_N)B_{k-2n+3}(e^{(1-t_{n})D}Q_N,R_{n-1})\\
&\qquad+(k-2n+3)(k-2n+2)\Gamma(e^{(1-t_{n})D}Q_N,e^{(1-t_{n})D}Q_N)R_{n-1}
\end{align*}
which verifies~\eqref{ind} and which is bounded by
\begin{align*}&\frac{k!}{(k-2n+2)!} \vertiii{B_{k-2n+2}}_{(d,2(n-1)d)}e^{2 \vertiii{D}}\|Q_N\|\|\Gamma(t_1)\|\cdots \|\Gamma(t_{n-1})\|\\
+&e^{2 \vertiii{D}}\|Q_N\|^2 \vertiii{B_{k-2n+3}}_{(d,(2n-1)d}\|R_{n-1}\|\\
+&(k-2n+3)(k-2n+2)\vertiii{\Gamma}_{(d,d)}e^{2 \vertiii{D}}\|Q_N\|^2\|R_{n-1}\|.
\end{align*}
Because of \eqref{boundQN}, it is bounded independently of $N$. We deduce also that $$\Gamma(t_i)=e^{t_iD}\Gamma( e^{(1-t_i)D}Q_N, e^{(1-t_i)D}Q_N)$$ is bounded by $\vertiii{\Gamma}_{(d,d)}e^{2\vertiii{D}}\|Q_N\|^2$ and consequently is bounded independently of $N,t_1,\ldots, t_n$.  Thus, $R_n$ is bounded independently of $N,t_1,\ldots, t_n$, as required.
\end{proof}

We recall that, because $D$ is a first order operator, $e^{t_nD}$ is  an algebra homomorphism.  Applying $e^{t_nD}$ to \eqref{ind} on the left, we obtain that, for all $1\leq n \leq [k/2]$,  $N\in \mathbb{N}$, and $(t_1,\ldots,t_n)\in \Delta_n$, there exists $R_n\in \PP_{(2n-1)d}$ bounded uniformly in $N,t_0,\ldots t_n$ such that $$e^{t_nD}Le^{(t_{n-1}-t_n)D}L\cdots L e^{(1-t_1)D}(Q^k_N)\\
=\frac{k!}{(k-2n)!}(e^{D}Q_N)^{k-2n} \Gamma(t_1)\cdots \Gamma(t_n)+(e^{D}Q_N)^{k-2n+1} R_n,$$
where $\Gamma(t)$ denotes the element $e^{tD}\Gamma( e^{(1-t)D}Q_N, e^{(1-t)D}Q_N)\in \PP_{2d}$.

From \eqref{qn}, we deduce that we have $\left[(e^DQ_N)^{k-2n}\right](\mx{1})=O(1/N^{2k-4n})$ and $\left[(e^DQ_N)^{k+1-2n}\right](\mx{1})=O(1/N^{2k+1-4n})$. We have already remarked in the proof of~\eqref{ind} that $\Gamma(t_i)=e^{t_iD}\Gamma( e^{(1-t_i)D}Q_N, e^{(1-t_i)D}Q_N)$ and $Q_N$ are bounded independently of $N,t_1,\ldots, t_n$; consequently, $$N^{2k-4n}\frac{k!}{(k-2n)!}\left[(e^DQ_N)^{k-2n} \Gamma(t_1)\cdots \Gamma(t_n)\right](\mx{1})\text{ and }N^{k+1-2n}\left[(e^DQ_N)^{k+1-2n} R_n\right](\mx{1})$$ are bounded independently of $N,t_1,\ldots, t_n$, and we deduce that
$$N^{k-2n}\left[\int_{\Delta_n} e^{t_nD}Le^{(t_{n+1}-t_n)D}L\cdots Le^{(t-t_1)D}\,dt_1\cdots dt_{n}(Q_N^k)\right](\mx{1})$$
is $O(1/N)$ if $k> 2n$ and is equal to $k!\int_{\Delta_n} \left(\Gamma(t_1)\cdots \Gamma(t_n)\right)(\mx{1})\,dt_1\cdots dt_{n}+O(1/N)$
if $k=2n$.

In the case where $k=2n$, because the integrand is symmetric in $t_1,\ldots, t_n$, the remaining term is equal to 
\[ \frac{k!}{n!}\int_{0\leq t_1, \ldots , t_{n}\leq 1}\left[\Gamma(t_1)\cdots \Gamma(t_n)\right](\mx{1})\,dt_1\cdots dt_n
= \frac{k!}{n!}\left(\int_0^1 \left[\Gamma(t)\right](\mx{1})\,dt\right)^n=\frac{(2n)!}{2^nn!} \sigma_{\mx{T}}(Q_N,Q_N)^n.\]
Note that $L$ kills constants, and similarly $\Gamma(P+c,Q+d) = \Gamma(P,Q)$ for any $c,d\in\C$. As a consequence, $ \sigma_{\mx{T}}(Q_N,Q_N)= \sigma_{\mx{T}}(P,P)$.

To sum up, $$N^{k-2n}\left[\int_{\Delta_n} e^{t_nD}Le^{(t_{n+1}-t_n)D}L\cdots Le^{(t-t_1)D}\,dt_1\cdots dt_{n}(Q_N^k)\right](\mx{1})$$
is equal to $\frac{(2n)!}{2^nn!} \sigma_{\mx{T}}(P,P)^n+O(1/N)$ if $k=2n$ and $O(1/N)$ if not.

\paragraph*{Step 3}We have $Q_N^k=(P-[e^D(P)](\mx{1}))^k+O(1/N^{2k})$ and
$$\vertiii{\int_{\Delta_{k+1}}e^{t_{k+1}(D+\frac{1}{N^2}L)}Le^{(t_{k+1}-t_k)D}L\cdots Le^{(t-t_1)D}\,dt_1\cdots dt_{k+1}} \leq \vertiii{L}^n e^{\vertiii{L}+\vertiii{D}}.$$
Consequently $$\left[\int_{\Delta_{k+1}}e^{t_{k+1}(D+\frac{1}{N^2}L)}Le^{(t_{k+1}-t_k)D}L\cdots Le^{(1-t_1)D}\,dt_1\cdots dt_{k+1}(Q_N^k)\right](\mx{1})$$ is bounded independently of $N$. On the other hand, $k-2([k/2]+1)\leq -1$ and $N^{k-2([k/2]+1)}$ is therefore $O(1/N)$. Thus, the term studied is $O(1/N)$.

\paragraph*{Step 4}Finally, applying Lemma~\ref{l.Driver} with $n=[k/2]$, and using the limits computed in the three previous steps, we have $N^k \E(Q_N^k(B^N(\mx{T})))=\frac{k!}{2^{k/2}(k/2)!} \sigma_{\mx{T}}(P,P)^{k/2}+O(1/N)$ if $k$ is even and $O(1/N)$ if not. Because the cardinality of $\mathcal{P}_2(k)$ is $\frac{k!}{2^{k/2}(k/2)!}$ if $k$ is even and $0$ if not, we have demonstrated the desired bound,
\[ N^k \E(Q_N^k(B^N(\mx{T}))) = \sum_{\pi\in\mathcal{P}_2(k)}\prod_{(i,j)\in\pi}  \sigma_{\mx{T}}(Q,Q) + O\left(\frac{1}{N}\right). \]
This concludes the proof of Theorem \ref{main}. \hfill $\square$

\section{Study of the covariance}\label{studycov}

In~\cite{LevyMaida2010}, L\'evy and Ma\"ida established a central limit theorem for random matrices arising from a unitary Brownian motion, which corresponds to the $(r,s)=(1,0)$ case.
\begin{theorem}{\cite[Theorem 2.6]{LevyMaida2010}} Let $(U_t^{N})_{t\geq 0}$ be a unitary Brownian motion on $\U_N$ ($U^N(t) = B^N_{1,0}(t)$ in our language). Let $P_1,\cdots,P_n\in \C[X,X^{-1}]$, and $T\geq 0$. When $N\to\infty$, the random vector
$$N\left(\tr \left(P_i(U^{N}(T))\right)-\E\left[\tr \left(P_i(U^{N}(T))\right)\right]\right)_{1\leq i \leq n} $$
converges in distribution to a Gaussian vector.
\end{theorem}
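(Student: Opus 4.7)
The plan is to identify the statement as the one-time, $(r,s)=(1,0)$ specialization of the Main Theorem~\ref{main} (equivalently Corollary~\ref{c.main}), and to dispense of the only wrinkle, which is that the test functions are Laurent polynomials rather than ordinary polynomials in $X$ and $X^*$.

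First, I would leverage unitarity: since $U^N(T)^{-1}=U^N(T)^*$, any Laurent polynomial $P_i\in\C[X,X^{-1}]$ can be rewritten, upon evaluation at $U^N(T)$, as $\widetilde P_i(U^N(T),U^N(T)^*)$ for an ordinary noncommutative polynomial $\widetilde P_i$ in two variables, obtained by replacing each negative power $X^{-k}$ by the positive power $(X^*)^k$. Setting $J=\{1\}$, this realizes each $\tr(P_i(U^N(T)))$ as $[Q_i]_N(U^N(T))$ for a suitable $Q_i\in\PP(\{1\})$ (a linear combination of word-variables $v_\ex$).

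Next I would invoke the general framework with a single time $\mx{T}=T$ and parameters $(r,s)=(1,0)$. In this regime $B^N_{1,0}(T)=U^N(T)$, so the paper's fluctuation variables become exactly
\[ X^N_{Q_i} = N\bigl(\tr(P_i(U^N(T))) - \E\tr(P_i(U^N(T)))\bigr). \]
Corollary~\ref{c.main} then immediately delivers the joint convergence of $(X^N_{Q_1},\ldots,X^N_{Q_n})$ to a centered complex Gaussian vector, completing the proof.

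The main obstacle is of course not in the reduction above---it lies entirely inside Theorem~\ref{main}, whose proof rests on a careful small-$N^{-2}$ expansion of the heat semigroup via a Duhamel formula (see Lemma~\ref{l.Driver}) and the identification of the leading-order covariance through the carr\'e du champ $\Gamma^{\mx{T}}$. Were one to forgo the paper's machinery and prove the unitary case directly, I would expect to reproduce L\'evy and Ma\"ida's original route through Schur--Weyl duality and character calculations on $\U_N$; the Lie-theoretic approach taken here has the virtue of extending uniformly to the non-normal ensembles $B^N_{r,s}(\mx{T})$ for arbitrary $(r,s)$ and arbitrarily many times.
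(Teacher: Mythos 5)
Your proposal is correct and follows the same route the paper itself indicates: the L\'evy--Ma\"ida theorem is precisely the $(r,s)=(1,0)$, single-time, $J=\{1\}$ specialization of Corollary~\ref{c.main} (built on Theorem~\ref{main}), once each Laurent polynomial is rewritten as a noncommutative $*$-polynomial in $X,X^*$ via $U^N(T)^{-1}=U^N(T)^\ast$. The paper does not re-prove this statement (it cites it as prior work), but the generalization claim after Theorem~\ref{t.main.summ} and the covariance match recorded in Corollary~\ref{c.main.cov} rest on exactly the reduction you describe.
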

\noindent (In fact, the test functions allowed in their approach were not only polynomials but $C^1$ real-valued functions with Lipschitz derivative on the unit circle.  Generalizing to $\GL_N$ does not allow for such functional calculus. The statement above for Laurent polynomials is obtained easily from the real-valued case by linearity.)

The limit covariance involves three free unitary Brownian motion $(u_t)_{t\geq 0}$, $(v_t)_{t\geq 0}$ and $(w_t)_{t\geq 0}$ which are freely independent (cf.\ Section \ref{s.fmbm}, in the special case $(r,s)=(1,0)$).
Following~\cite{LevyMaida2010}, for all $P\in \C[X,X^{-1}]$, we denote by $P'\in \C[X,X^{-1}]$ the derivative of $P$ on the unit circle:
\[P'(z)=\lim_{h\to 0}\frac{f(ze^{ih})-f(z)}{h}, \quad \text{ for }\; z\in \U. \]
(Concretely, for all $n\in \mathbb{Z}$, if $P=X^n$ then $P'=inX^n$.) L\'evy and Ma\"ida proved that, for all $P,Q\in \C[X,X^{-1}]$, the covariance of the random variables $N(\tr P(U_{T}^{N})-\E[\tr P(U_{T}^{N})])$ and $N(\tr Q(U_{T}^{N})-\E[\tr Q(U_{T}^{N})])$ is asymptotically equal to
\begin{equation}
%\sigma_T(P,Q^*) =
\int_0^T \tau\left(P'(u_tv_{T-t})(Q'(u_tw_{T-t}))^*\right)dt, \label{cococovariance}\end{equation}
and moreover that, as $T\to\infty$, this approaches the Sobolev $H_{1/2}$ inner product of $P,Q$ (cf.\ \cite[Theorem 9.3]{LevyMaida2010}). Note that the expression \eqref{cococovariance} is obtained from the expression of the covariance in \cite[Definition 2.4]{LevyMaida2010} by linearity (since the expression of the covariance in \cite[Definition 2.4]{LevyMaida2010} is only valid for real-valued functions).

In this section, we relate our result to theirs by giving another expression of the covariance of the fluctuations of the more general processes $B^N_{r,s}$, which naturally generalizes \eqref{cococovariance}.

\subsection{New characterization of the covariance}
Denote by $I_N^J$ the identity element $(I_N,\ldots,I_N)\in \GL_N^J$. In the following proposition, we express the covariance with the help of three independent $(r,s)$-Brownian motions.
\begin{proposition}Let $B^N,C^N,D^N$ be three families of independent $(r,s)$-Brownian motions on $\GL_N$ indexed by $J$ which are independent. For all $P,Q\in \PP(J)$, we have\label{covn}
\[ \sigma_{\mx{T}} (P,Q) =2 N^2\int_0^1\E\left[\Gamma^{\mx{T}}_N\left([P]_N(B^N_{t\mx{T}}(\cdot)C^N_{(1-t)\mx{T}}),[Q]_N(B^N_{t\mx{T}}(\cdot)D^N_{(1-t)\mx{T}})\right)(I_N^J)\right]dt+O\left(\frac{1}{N^2}\right). \]
\end{proposition}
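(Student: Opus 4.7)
The strategy is to reduce the random carré du champ on the right-hand side to an iterated application of the abstract operators $\D^{\mx{T}}, \Gamma^{\mx{T}}$ on $\PP(J)$, matching the definition of $\sigma_{\mx{T}}(P,Q)$, by successively integrating out the three independent Brownian motions $B^N, C^N, D^N$. The three tools I will repeatedly invoke are Proposition~\ref{p.carre} (to identify $N^2\Gamma^{\mx{T}}_N$ of two trace polynomial functions with a trace polynomial), Corollary~\ref{c.deltaonp} (to integrate against a single Brownian motion via the abstract semigroup $e^{s(\D^{\mx{T}}+\L^{\mx{T}}/N^2)}$), and Lemma~\ref{l.Driver} (to replace that semigroup by $e^{s\D^{\mx{T}}}$ at cost $\mx{O}(1/N^2)$ in operator norm on the relevant finite-dimensional subspace of $\PP(J)$).

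First I would establish a left-invariance identity that lets me move the factor $B^N_{t\mx{T}}$ from inside the argument of $[P]_N$ to the base-point of the carré du champ. Setting $F_1(g) := [P]_N(g\,C^N_{(1-t)\mx{T}})$ and $F_2(g) := [Q]_N(g\,D^N_{(1-t)\mx{T}})$, the fact that left-translation by $B^N_{t\mx{T}}$ preserves the left-invariant vector fields $\partial_{\xi_j}$ gives
\[
\Gamma^{\mx{T}}_N\!\bigl([P]_N(B^N_{t\mx{T}}(\cdot)C^N_{(1-t)\mx{T}}),\,[Q]_N(B^N_{t\mx{T}}(\cdot)D^N_{(1-t)\mx{T}})\bigr)(I_N^J) = \Gamma^{\mx{T}}_N(F_1,F_2)(B^N_{t\mx{T}}).
\]
Next, independence of $C^N$ and $D^N$ together with linearity of $\partial_{\xi_j}$ let me commute the conditional expectation $\E_{C^N,D^N}$ with each argument of $\Gamma^{\mx{T}}_N$, replacing $F_i$ by its mean $\bar F_i := \E F_i$. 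By left-invariance of the Brownian motion and Theorem~\ref{deltaonp}(3), $\bar F_1(g) = (e^{(1-t)\mx{T}\cdot\Delta^N}[P]_N)(g) = [R_1]_N(g)$ with $R_1 := e^{(1-t)(\D^{\mx{T}}+\L^{\mx{T}}/N^2)}P$, and analogously $\bar F_2 = [R_2]_N$. Proposition~\ref{p.carre} then yields $N^2\Gamma^{\mx{T}}_N(\bar F_1,\bar F_2) = [\Gamma^{\mx{T}}(R_1,R_2)]_N$, and applying Corollary~\ref{c.deltaonp} to the remaining expectation over $B^N$ gives
\[
N^2\,\E\!\left[\Gamma^{\mx{T}}_N\!\bigl([P]_N(B^N_{t\mx{T}}(\cdot)C^N_{(1-t)\mx{T}}),\,[Q]_N(B^N_{t\mx{T}}(\cdot)D^N_{(1-t)\mx{T}})\bigr)(I_N^J)\right] = \bigl[e^{t(\D^{\mx{T}}+\L^{\mx{T}}/N^2)}\Gamma^{\mx{T}}(R_1,R_2)\bigr](\mx{1}).
\]

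To finish, I restrict all operators to the finite-dimensional subspace of $\PP(J)$ of elements of degree at most $\deg P + \deg Q$, which is invariant under $\D^{\mx{T}}, \L^{\mx{T}}, \Gamma^{\mx{T}}$ with operator norms independent of $N$. Lemma~\ref{l.Driver} then supplies a uniform-in-$s\in[0,1]$ estimate $e^{s(\D^{\mx{T}}+\L^{\mx{T}}/N^2)} = e^{s\D^{\mx{T}}} + \mx{O}(1/N^2)$, and bilinearity of $\Gamma^{\mx{T}}$ together with continuity of evaluation at $\mx{1}$ propagates this to the scalar identity $[e^{t\D^{\mx{T}}}\Gamma^{\mx{T}}(e^{(1-t)\D^{\mx{T}}}P, e^{(1-t)\D^{\mx{T}}}Q)](\mx{1}) + O(1/N^2)$. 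Multiplying by $2$, integrating over $t \in [0,1]$, and comparing with the definition of $\sigma_{\mx{T}}(P,Q)$ concludes the argument. The only subtle step is the first left-invariance identity, which reduces to the chain-rule calculation $(\partial_{\xi_j}(F_i \circ L_B))(I_N^J) = (\partial_{\xi_j}F_i)(B)$ for $L_B$ left-translation by $B$; the rest is routine bookkeeping, with the uniformity of the $O(1/N^2)$ estimate in $t$ guaranteed by working throughout inside a fixed finite-dimensional invariant subspace.
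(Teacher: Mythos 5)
Your proof is correct and follows essentially the same approach as the paper. The paper establishes the key integrand identity by a forward chain of equalities starting from $\sigma_{\mx{T}}$ and commuting derivatives past the heat semigroup using right-invariant vector fields; you run the same computation in reverse and package that commutation as a conditional-expectation factorization exploiting the independence of $C^N$ and $D^N$, but the ingredients — the left-invariance identity, the $\Gamma^{\mx{T}}/\Gamma^{\mx{T}}_N$ intertwining of Proposition~\ref{p.carre}, the heat-semigroup representation via Corollary~\ref{c.deltaonp}, and the Duhamel estimate of Lemma~\ref{l.Driver} on a fixed finite-dimensional invariant subspace — are all the same.
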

\noindent (To be clear on notation: the functions in the arguments of $\Gamma_N^\mx{T}$ above are
\[ G\mapsto [P]_N(B^N_{t\mx{T}}GC^N_{(1-t)\mx{T}}) \quad\text{and}\quad G\mapsto [Q]_N(B^N_{t\mx{T}}GD^N_{(1-t)\mx{T}}); \]
the resultant function after applying $\Gamma_N^\mx{T}$ is then evaluated at $I_N^J$ before integrating.  This $(\cdot)$ notation is used throughout this section.)

\begin{proof}For all $P,Q\in\PP(J)$, we have
\[ \sigma_{\mx{T}}(P,Q) = 2\int_0^1 \left[e^{t\D^{\mx{T}}}\left(\Gamma^{\mx{T}}(e^{(1-t)\D^{\mx{T}}}P,e^{(1-t)\D^{\mx{T}}}Q)\right)\right](\mx{1})\,dt. \]
As in the proof of Theorem~\ref{main}, we restrict our computations on a finite-dimensional space $\PP_d$ (take $d$ to be the sum of the degrees of $P$ and $Q$). Because of Lemma~\ref{dev}, we have $N^2\left(e^{t\D^{\mx{T}}}-e^{t(\D^{\mx{T}}+\frac{1}{N^2}\L^{\mx{T}})}\right)$ bounded independently of $N$ and $t$; consequently, it is straightforward to verify that
\[ \sigma_{\mx{T}}(P,Q) = 2\int_0^1 \left[e^{t(\D^{\mx{T}}+\frac{1}{N^2}\L^{\mx{T}})}\left(\Gamma^{\mx{T}}(e^{(1-t)(\D^{\mx{T}}+\frac{1}{N^2}\L^{\mx{T}})}P,e^{(1-t)(\D^{\mx{T}}+\frac{1}{N^2}\L^{\mx{T}})}Q)\right)\right](\mx{1})\,dt+O\left(\frac{1}{N^2}\right). \]
Hence, the proof will be complete once we show that, for $0\leq t\leq 1$,
\begin{multline}  \left[e^{t(\D^{\mx{T}}+\frac{1}{N^2}\L^{\mx{T}})}\left(\Gamma^{\mx{T}}(e^{(1-t)(\D^{\mx{T}}+\frac{1}{N^2}\L^{\mx{T}})}P,e^{(1-t)(\D^{\mx{T}}+\frac{1}{N^2}\L^{\mx{T}})}Q)\right)\right](\mx{1})\\=N^2\E\left[\Gamma^{\mx{T}}_N\left([P]_N(B^N_{t\mx{T}}(\cdot)C^N_{(1-t)\mx{T}}),[Q]_N(B^N_{t\mx{T}}(\cdot)D^N_{(1-t)\mx{T}})\right)(I_N^J)\right].\label{aime}
\end{multline}
Fix $t\in[0,1]$. We start from the left side to recover the right side. First of all, using Theorem~\ref{deltaonp}, Proposition~\ref{p.carre} and Definition~\ref{genbnt}, we have
\begin{align*}
&\left[e^{t(\D^{\mx{T}}+\frac{1}{N^2}\L^{\mx{T}})}\left(\Gamma^{\mx{T}}(e^{(1-t)(\D^{\mx{T}}+\frac{1}{N^2}\L^{\mx{T}})}P,e^{(1-t)(\D^{\mx{T}}+\frac{1}{N^2}\L^{\mx{T}})}Q)\right)\right](\mx{1})\\
=&e^{t(\mx{T}\cdot\Delta^N)}\left[\Gamma^{\mx{T}}(e^{(1-t)(\D^{\mx{T}}+\frac{1}{N^2}\L^{\mx{T}})}P,e^{(1-t)(\D^{\mx{T}}+\frac{1}{N^2}\L^{\mx{T}})}Q)\right]_N(I_N^J)\\
=&N^2 \left[e^{t(\mx{T}\cdot\Delta^N)}\left(\Gamma^{\mx{T}}_N([e^{(1-t)(\D^{\mx{T}}+\frac{1}{N^2}\L^{\mx{T}})}P]_N,[e^{(1-t)(\D^{\mx{T}}+\frac{1}{N^2}\L^{\mx{T}})}Q]_N)\right)\right](I_N^J)\\
=&N^2 \left[e^{t(\mx{T}\cdot\Delta^N)}\left(\Gamma^{\mx{T}}_N(e^{(1-t)(\mx{T}\cdot\Delta^N)}[P]_N,e^{(1-t)(\mx{T}\cdot\Delta^N)}[Q]_N)\right)\right](I_N^J)\\
=&N^2\E\left[\left(\Gamma^{\mx{T}}_N(e^{(1-t)(\mx{T}\cdot\Delta^N)}[P]_N,e^{(1-t)(\mx{T}\cdot\Delta^N)}[Q]_N)\right)(B^N_{t\mx{T}})\right].
 \end{align*}
Recall that $\beta_{r,s}^N$ is an orthonormal basis of $\M_N$ for the metric $\langle\cdot,\cdot\rangle_{r,s}^N$, and for all $\xi\in \beta_{r,s}^N$, $\xi_j$ is the left-invariant vector field which acts only on the $j$th component of $\GL_N^J$. Let us denote respectively by $L_\xi$ and $R_\xi$ the left and the right translation by $\xi$. We follow the notation of Section~\ref{not}. We compute
 \begin{align*}
&\left(\Gamma^{\mx{T}}_N(e^{(1-t)(\mx{T}\cdot\Delta^N)}[P]_N,e^{(1-t)(\mx{T}\cdot\Delta^N)}[Q]_N\right)(B^N_{t\mx{T}})\\
=&\frac{1}{2}\left[\sum_{\xi\in \beta_{r,s}^N, j\in J} t_j\left(\partial_{\xi_j} (e^{(1-t)(\mx{T}\cdot\Delta^N)}[P]_N)\right)\left(\partial_{\xi_j}(e^{(1-t)(\mx{T}\cdot\Delta^N)}[Q]_N)\right)\right](B^N_{t\mx{T}})\\
=&\frac{1}{2}\sum_{\xi\in \beta_{r,s}^N, j\in J}t_j \left[\left(L_{B^N_{t\mx{T}}}\circ (\partial_{\xi_j} e^{(1-t)(\mx{T}\cdot\Delta^N)}[P]_N)\right)(I_N^J)\right]\cdot\left[\left(L_{B^N_{t\mx{T}}}\circ (\partial_{\xi_j} e^{(1-t)(\mx{T}\cdot\Delta^N)}[Q]_N)\right)(I_N^J)\right].
 \end{align*}
Here, in order to reverse the different operators, we introduce the right-invariant vector fields. For all $\xi\in \M_N$, let us denote by $\partial_\xi'$ the associated right-invariant vector field on $\GL_N$:
$$(\partial_\xi' f)(g)=\left.\frac{d}{dt}\right|_{t=0}f(e^{t\xi}g),\hspace{1cm} f\in C^{\infty}(\GL_N).$$
For all $\xi\in \beta_{r,s}^N$, $\del_{\xi_j}'$ is the corresponding right-invariant vector field which acts only on the $j$th component of $\GL_N^J$. Note that, for all  $\xi,\zeta\in \M_N$ and all $j,k\in J$, the vector fields $\partial_{\xi_i}$ and $\partial_{\zeta_k}'$ commute. Moreover, for all $f\in C^{\infty}(\GL_N^J)$, we have $(\partial_{\xi_j} f)(I_N^J)=(\partial_{\xi_j}' f)(I_N^J)$. Using those two facts, we compute
 \begin{align*}
\left(L_{B^N_{t\mx{T}}}\circ (\partial_{\xi_j} e^{(1-t)(\mx{T}\cdot\Delta^N)}[P]_N)\right)(I_N^J)
=&\left( \partial_{\xi_j} e^{(1-t)(\mx{T}\cdot\Delta^N)}(L_{B^N_{t\mx{T}}}\circ[P]_N)\right)(I_N^J)\\
=&\left( \partial_{\xi_j}' e^{(1-t)(\mx{T}\cdot\Delta^N)}(L_{B^N_{t\mx{T}}}\circ[P]_N)\right)(I_N^J)\\
=&\left( e^{(1-t)(\mx{T}\cdot\Delta^N)}\partial_{\xi_j}' (L_{B^N_{t\mx{T}}}\circ[P]_N)\right)(I_N^J)\\
=&\E\left[\left.\left( \partial_{\xi_j}' (L_{B^N_{t\mx{T}}}\circ[P]_N)\right)(C^N_{(1-t)\mx{T}})\right|B^N_{t\mx{T}}\right]\\
=&\E\left[\left.\left(R_{C^N_{(1-t)\mx{T}}} \circ\partial_{\xi_j}' (L_{B^N_{t\mx{T}}}\circ[P]_N)\right)(I_N^J)\right|B^N_{t\mx{T}}\right]\\
=&\E\left[\left.\left(\partial_{\xi_j}' (R_{C^N_{(1-t)\mx{T}}} \circ L_{B^N_{t\mx{T}}}\circ[P]_N)\right)(I_N^J)\right|B^N_{t\mx{T}}\right]\\
=&\E\left[\left.\partial_{\xi_j} \left([P]_N(B^N_{t\mx{T}}(\cdot)C^N_{(1-t)\mx{T}})\right)(I_N^J)\right|B^N_{t\mx{T}}\right]\\
 \end{align*}
 and similarly $\left(L_{B^N_{t\mx{T}}}\circ (\partial_{\xi_j} e^{(1-t)(\mx{T}\cdot\Delta^N)}[Q]_N)\right)(I_N^J)=\E\left[\left.\partial_{\xi_j} \left([Q]_N(B^N_{t\mx{T}}(\cdot)D^N_{(1-t)\mx{T}})\right)(I_N^J)\right|B^N_{t\mx{T}}\right]$. It follows that
  \begin{align*}
&\frac{1}{2}\sum_{\xi\in \beta_{r,s}^N, j\in J} t_j\left[\left(L_{B^N_{t\mx{T}}}\circ (\partial_{\xi_j} e^{(1-t)(\mx{T}\cdot\Delta^N)}[P]_N)\right)(I_N^J)\right]\cdot\left[\left(L_{B^N_{t\mx{T}}}\circ (\partial_{\xi_j} e^{(1-t)(\mx{T}\cdot\Delta^N)}[Q]_N)\right)(I_N^J)\right]\\
=&\frac{1}{2}\sum_{\xi\in \beta_{r,s}^N, j\in J}t_j\E\left[\left.\partial_{\xi_j} \left([P]_N(B^N_{t\mx{T}}(\cdot)C^N_{(1-t)\mx{T}})\right)(I_N^J)\cdot \partial_{\xi_j} \left([Q]_N(B^N_{t\mx{T}}(\cdot)D^N_{(1-t)\mx{T}})\right)(I_N^J)\right|B^N_{t\mx{T}}\right]\\
=&\E\left[\left.\Gamma^{\mx{T}}_N\left([P]_N(B^N_{t\mx{T}}(\cdot)C^N_{(1-t)\mx{T}}),[Q]_N(B^N_{t\mx{T}}(\cdot)D^N_{(1-t)\mx{T}})\right)(I_N^J)\right|B^N_{t\mx{T}}\right].
 \end{align*}
 Taking the expectation leads to the right side of \eqref{aime}.
\end{proof}
We shall now let the dimension tend to infinity in the previous proposition in order to have a new expression of the covariance involving three freely independent free multiplicative $(r,s)$-Brownian motions.
\begin{theorem} \label{t.main.cov} For all $P,Q\in \PP(J)$, there exists $\tilde{\Gamma}^{\mx{T}}(P,Q)\in \PP(J^3)$ such that for all $N\in \mathbb{N}$, and all $B,C,D\in \GL_N^J$,\label{cov}
\begin{equation}
N^2\Gamma^{\mx{T}}_N\left([P]_N(B(\cdot)C),[Q]_N(B(\cdot)D)\right)(I_N^J)=\left[\tilde{\Gamma}^{\mx{T}}(P,Q)\right]_N(B,C,D)\label{gamma}
\end{equation}
and in this case, taking three families $b,c,d$ of free multiplicative $(r,s)$-Brownian motions indexed by $J$ which are freely independent in a noncommutative probability space $(\mathscr{A},\tau)$, we have
$$ \sigma_{\mx{T}} (P,Q) =2\int_0^1\left[\tilde{\Gamma}^{\mx{T}}(P,Q)\right]_{(\mathscr{A},\tau)}(b_{t\mx{T}},c_{(1-t)\mx{T}},d_{(1-t)\mx{T}})dt.$$
\end{theorem}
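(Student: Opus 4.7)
My approach is to prove the theorem in two main steps: first construct $\tilde{\Gamma}^{\mx{T}}(P,Q)$ explicitly as an element of $\PP(J^3)$ by direct computation using the magic formulas, then obtain the integral formula by passing to the large-$N$ limit in Proposition \ref{covn}.

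\textbf{Step 1: Existence of $\tilde{\Gamma}^{\mx{T}}$.} I would first reduce to the case where $P = v_\ex$ and $Q = v_\eta$ are monomials, with $\ex = ((j_1,\ex_1),\ldots,(j_n,\ex_n))$ and $\eta = ((k_1,\eta_1),\ldots,(k_m,\eta_m))$. For $B,C,D \in \GL_N^J$, the function $G \mapsto [P]_N(BGC)$ is (a normalized trace of) a product of factors $(B_{j_i} G_{j_i} C_{j_i})^{\ex_i}$. Differentiating at $G = I_N^J$ in the direction $\xi_j$ and using cyclicity of the trace yields, by the Leibniz rule, a sum over indices $i$ with $j_i = j$ of terms $\tr(\xi A_i)$ or $\tr(\xi^* A_i)$, where $A_i$ is an explicit cyclic rearrangement built from $B$ and $C$; adjoint entries ($\ex_i = *$) swap $B$ and $C^*$ on either side of $\xi^*$. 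The same computation applies to $[Q]_N(BGD)$, producing analogous terms in $B$ and $D$. Substituting these into
\[ \Gamma^{\mx{T}}_N(f,g) = \frac{1}{2}\sum_{\xi \in \beta_{r,s}^N,\,j\in J} t_j\, (\partial_{\xi_j} f)(\partial_{\xi_j} g) \]
gives pairs of factors of the form $\tr(\xi^{\#} A)\tr(\xi^{\#} A')$, to which Proposition \ref{magicformula} applies directly, producing $\frac{1}{N^2}(s\pm r)\tr(AA')$ (with sign determined by the combination of $\ex_i,\eta_{i'}$). The factor $N^2$ on the left cancels the $\frac{1}{N^2}$ from the magic formulas, leaving a $\C$-linear combination of normalized traces of words in $B_j$, $C_j$, $D_j$ and their adjoints, which is manifestly the evaluation at $(B,C,D)$ of a fixed element $\tilde{\Gamma}^{\mx{T}}(v_\ex,v_\eta) \in \PP(J^3)$ (indexing the three copies of $J$ so that $B$, $C$, $D$ live in disjoint slots). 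I would then extend $\tilde{\Gamma}^{\mx{T}}$ bilinearly to products of $v_\ex$'s by invoking the derivation property of $\Gamma^{\mx{T}}_N$ in each argument (Lemma \ref{landg} applied one variable at a time) together with the fact that ordinary products of trace polynomial functions pull back to the commutative product in $\PP(J^3)$.

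\textbf{Step 2: Passing to the limit.} From Proposition \ref{covn}, identity \eqref{gamma} lets me rewrite
\[ \sigma_{\mx{T}}(P,Q) = 2\int_0^1 \E\left[\left[\tilde{\Gamma}^{\mx{T}}(P,Q)\right]_N\!\left(B^N_{t\mx{T}},C^N_{(1-t)\mx{T}},D^N_{(1-t)\mx{T}}\right)\right] dt + O\!\left(\tfrac{1}{N^2}\right). \]
Since $\sigma_{\mx{T}}(P,Q)$ is independent of $N$, it suffices to evaluate the limit of the right-hand side as $N\to\infty$. The three families $B^N, C^N, D^N$ are classically independent and unitarily invariant in each entry, and by the main convergence theorem of \cite{Kemp2013b} each converges in noncommutative distribution to a family of freely independent free multiplicative $(r,s)$-Brownian motions; classical independence combined with unitary invariance upgrades this to joint convergence with asymptotic freeness between the three families. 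Because $\tilde{\Gamma}^{\mx{T}}(P,Q) \in \PP(J^3)$ is a fixed polynomial expression in trace monomials, this yields, for each fixed $t\in[0,1]$, convergence of the integrand to $[\tilde{\Gamma}^{\mx{T}}(P,Q)]_{(\A,\tau)}(b_{t\mx{T}},c_{(1-t)\mx{T}},d_{(1-t)\mx{T}})$. Uniform boundedness of the integrand in $t$ and $N$ (using that $\tilde{\Gamma}^{\mx{T}}(P,Q)$ lies in a fixed finite-dimensional subspace of $\PP(J^3)$ together with uniform $L^p$-bounds on the entries of $B^N_t, C^N_t, D^N_t$, as was already invoked in Step 2 of the proof of Theorem~\ref{main}) justifies dominated convergence, concluding the proof.

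\textbf{Main obstacle.} The conceptual work is entirely in Step 1. The limiting argument in Step 2 is essentially forced once Step 1 is in place, but the bookkeeping in Step 1 is delicate: one must verify that when an adjoint letter is differentiated, the resulting factor $(B_{j_i}\xi C_{j_i})^* = C_{j_i}^*\xi^* B_{j_i}^*$ reassembles correctly after the magic formula is applied, and that the output is genuinely the evaluation of a single element of $\PP(J^3)$ that does not depend on $N$. The cleanest way to organize this is to treat the three arguments as lying in disjoint index sets $J^{(B)}, J^{(C)}, J^{(D)}$ so that every word produced in the magic formula has an unambiguous home in $\PP(J^{(B)} \sqcup J^{(C)} \sqcup J^{(D)}) \cong \PP(J^3)$.
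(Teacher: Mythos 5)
Your proposal is correct and matches the paper's own argument. Step~1 is essentially identical to the paper's proof: reduce to monomials $v_\ex$, $v_\delta$, differentiate at $I_N^J$, apply the magic formulas of Proposition~\ref{magicformula} to cancel the $N^2$, read off a fixed element $\tilde{\Gamma}^{\mx{T}}(v_\ex,v_\delta)\in\PP(J^3)$, and extend via the derivation property of $\Gamma^{\mx{T}}_N$ in each argument (the paper writes out the extension formula $\tilde{\Gamma}^{\mx{T}}(P_1\cdots P_k,Q_1\cdots Q_l)=\sum_{i,j}P_1\cdots\widehat{P_i}\cdots P_kQ_1\cdots\widehat{Q_j}\cdots Q_l\,\tilde{\Gamma}^{\mx{T}}(P_i,Q_j)$ explicitly). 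In Step~2 the paper takes a marginally shorter route: instead of re-deriving asymptotic freeness from classical independence and unitary invariance and then invoking dominated convergence, it directly cites \cite[Corollary 5.6]{Kemp2013b} (see Remark~\ref{r.copout}) for the uniform estimate $\E\bigl[[R]_N(B^N_{t\mx{T}},C^N_{(1-t)\mx{T}},D^N_{(1-t)\mx{T}})\bigr]=[R]_{(\mathscr{A},\tau)}(b_{t\mx{T}},c_{(1-t)\mx{T}},d_{(1-t)\mx{T}})+O(1/N^2)$, which makes the limit interchange immediate; your version needs the extra (but standard) observation that the integrand is bounded uniformly in $t\in[0,1]$ and $N$, which you correctly flag. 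Both limiting arguments are sound, so the difference is one of packaging rather than substance.
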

\noindent This expression for the covariance, albeit instructive, is not explicit, but in the next section, we will compute the function $\left[\tilde{\Gamma}^{\mx{T}}(P,Q)\right]_N$ explicitly in the simple case $J=\{1\}$ and ${\mx{T}}=(T)$.

\begin{proof}Let us suppose first that the polynomials $P$ and $Q$ are given by $P=v_\ex$ and $Q=v_\delta$, with $\ex=((j_1,\ex_1),\ldots,(j_n,\ex_n))\in \EX$ and $\delta=((k_1,\delta_1),\ldots,(k_m,\delta_m))\in \EX$. Hence, for any input $G\in\M_N$,
$$[P]_N(BGC)=\tr((BGC)^{\ex_1}_{j_1}\cdots (BGC)^{\ex_n}_{j_n}),\quad [Q]_N(BGD)=\tr((BGD)^{\delta_1}_{h_1}\cdots (BGD)^{\delta_n}_{h_n}).$$
We can then compute
\begin{align*} \partial_{\xi_j} \left([P]_N(B(\cdot)C)\right)(I_N^J)
=&\sum_{l=1}^n\delta_{j,j_l} \tr((BC)^{\ex_1}_{j_1}\cdots (B\xi C)^{\ex_k}_{j_k}\cdots(BC)^{\ex_n}_{j_n})\\
=&\sum_{l=1}^n\delta_{j,j_l} \tr(\xi^{\ex_k} \cdot [v_{\ex^{(l)}}]_N(B,C,D)),
\end{align*}
where $\ex^{(l)}$ is a word in $\EX(J^3)$, which depends on $\ex$ and $l$. Similarly,
$$\partial_{\xi_j} \left([Q]_N(B(\cdot)D)\right)(I_N^J)=\sum_{h=1}^m\delta_{j,j_h} \tr(\xi^{\delta_h}\cdot [v_{\delta^{(h)}}]_N(B,C,D)),$$
where $\delta^{(h)}$ is a word in $\EX(J^3)$, which depends on $\delta$ and $h$. Finally, using the magic formula of Proposition~\ref{magicformula}, we have
\begin{multline*}\frac{N^2}{2}\sum_{\xi\in \beta_{r,s}^N, j\in J}t_j\partial_{\xi_j} \left([P]_N(B(\cdot)C)\right)\partial_{\xi_j} \left([Q]_N(B(\cdot)D)\right)(I_N^J)\\=\frac{1}{2}\sum_{ j\in J}t_j\sum_{l=1}^n\sum_{h=1}^m\delta_{j,j_l}\delta_{j,j_h}\left(s+\sigma_{l,h} r\right)\tr\left( [v_{\ex^{(l)}}](B,C,D)\cdot [v_{\delta^{(h)}}]_N(B,C,D)\right),
\end{multline*}
where $\sigma_{l,h}\in\{\pm1\}$ depends on $\epsilon$, $\delta$, $l$ and $h$.
Thus, the element $$\tilde{\Gamma}^{\mx{T}}(v_\ex,v_\delta)=\frac{1}{2}\sum_{ j\in J}t_j\sum_{l=1}^n\sum_{h=1}^m\delta_{j,j_l}\delta_{j,j_h}\left(r+\sigma_{l,h} s\right)v_{\ex^{(l)}\delta^{(h)}}\in \PP(J^3)$$ satisfies~\eqref{gamma}.

We extend the definition of $\tilde{\Gamma}^{\mx{T}}$ to all elements of $\PP(J)$ of the form $P_1\cdots P_k,Q_1\cdots Q_l \in \PP$ by the relation
$$\tilde{\Gamma}^{\mx{T}}(P_1\cdots P_k,Q_1\cdots Q_l)=\sum_{1\leq i\leq k}\sum_{1\leq j\leq l} P_1\cdots \widehat{P_i}\cdots P_kQ_1\cdots \widehat{Q_j}\cdots Q_l\tilde{\Gamma}^{\mx{T}}(P_i,Q_j),$$
and finally, we extend $\tilde{\Gamma}^{\mx{T}}$ to all elements of $\PP(J)$ by bilinearity. Because $\Gamma^{\mx{T}}_N$ fulfills the same relations, this demonstrates \eqref{gamma}.

Thanks to Proposition~\ref{covn}, we have
\begin{align*} \sigma_{\mx{T}} (P,Q)
&=2 N^2\int_0^1\E\left[\Gamma^{\mx{T}}_N\left([P]_N(B^N_{t\mx{T}}(\cdot)C^N_{(1-t)\mx{T}}),[Q]_N(B^N_{t\mx{T}}(\cdot)D^N_{(1-t)\mx{T}})\right)(I_N^J)\right]dt+O\left(\frac{1}{N^2}\right)\\
&=2\E\left[\int_0^1\left[\tilde{\Gamma}^{\mx{T}}(P,Q)\right]_N(B^N_{t\mx{T}},C^N_{(1-t)\mx{T}},D^N_{(1-t)\mx{T}})dt \right]+O\left(\frac{1}{N^2}\right)\\
&=2\E\left[\left[ \int_0^1\left(\tilde{\Gamma}^{\mx{T}}(P,Q)\right)dt\right]_N (B^N_{t\mx{T}},C^N_{(1-t)\mx{T}},D^N_{(1-t)\mx{T}})\right]+O\left(\frac{1}{N^2}\right).\\
\end{align*}
In~\cite{Kemp2013b}, it is proved that, for all $R\in \PP(J^3)$, we have
\[ \E\left[ [R]_N(B^N_{t\mx{T}},C^N_{(1-t)\mx{T}},D^N_{(1-t)\mx{T}})\right]=[R]_{(\mathscr{A},\tau)}(b_{t\mx{T}},c_{(1-t)\mx{T}},d_{(1-t)\mx{T}})+O\left(\frac{1}{N^2}\right), \] 
(cf.\ Remark \ref{r.copout}). Letting $N\to\infty$, it follows that
\begin{equation*}
\sigma_{\mx{T}} (P,Q) =2\int_0^1\left[\tilde{\Gamma}^{\mx{T}}(P,Q)\right]_{(\mathscr{A},\tau)}(b_{t\mx{T}},c_{(1-t)\mx{T}},d_{(1-t)\mx{T}})dt.\qedhere
\end{equation*}
\end{proof}

\begin{remark} \label{r.copout} The main theorem \cite[Theorem 1.1]{Kemp2013b} is stated in the special case that $R$ is the trace of a noncommutative polynomial, and moreover only for instances of a single Brownian motion.  However, \cite[Corollary 5.6]{Kemp2013b} shows how to quickly and easily extend this to the more general setting of convergence of any trace polynomial in instances of any finite family of independent Brownian motions, as we use presently. \end{remark}

% To be clear, the main theorem \cite[Theorem 1.6]{Kemp2013b} yields this result in the special case that $R$ is the trace of a noncommutative polynomial.  (It is stated for instances of a single Brownian motion at any collection of times; because the inverse of the Brownian motion is also a Brownian motion, and the multiplicative increments are independent, it follows immediately that this convergence holds for any collection of independent Brownian motions.)  The full statement involving any trace polynomial $R$ follows from a similar proof, with few modifications; the key covariance estimate \cite[Theorem 1.13]{Kemp2013b} is just a special case of \cite[Theorem 3.16]{Kemp2013b}, which indeed holds for all trace polynomials $R$ as needed.  \end{remark}

\subsection{The simple case of polynomials}

Throughout this section, we investigate the case where $J=\{1\}$ and ${\mx{T}}=(T)$. In this case, we have the injective map from $\C\langle X,X^*\rangle$ to $\tr(\C\{J\})\cong \PP(J)$ denoted by $\tr$, and similarly the injective map from $\C\langle X_1,X_1^*,X_2,X_2^*,X_3,X_3^*\rangle$ to $\tr(\C\{J^3\})\cong\PP(J^3)$ also denoted by $\tr$.

In the case of a polynomial, it is possible to compute explicitly the term $\left[\tilde{\Gamma}^\mx{T}(P,Q)\right]$ of Theorem~\ref{cov}, and thus recover the expression for the covariance given by~\eqref{cococovariance}.

\begin{corollary} \label{c.main.cov} Let us suppose $J=\{1\}$, ${\mx{T}}=(T)$, and $P, Q\in \C[X]$. Then, following Theorem~\ref{cov},
$$\left(\tilde{\Gamma}^{\mx{T}}(\tr P,\tr Q)\right)=\frac{T}{2}(s-r)\tr(P'(X_1X_2)Q'(X_1X_3)) , $$
$$\left(\tilde{\Gamma}^{\mx{T}}(\tr P,\tr Q^*)\right)=\frac{T}{2}(r+s)\tr(P'(X_1X_2)(Q'(X_1X_3))^*)  $$
and
$$\left(\tilde{\Gamma}^{\mx{T}}(\tr P^*,\tr Q^*)\right)=\frac{T}{2}(s-r)\tr((P'(X_1X_2))^*(Q'(X_1X_3))^*)  .$$
Consequently, taking three free multiplicative $(r,s)$-Brownian motions $b,c,d$ which are freely independent in a noncommutative probability space $(\mathscr{A},\tau)$, we have
\begin{align*} \sigma_{\mx{T}} (\tr P,\tr Q) &=(s-r)\int_0^T\tau\!\left[P'(b_t c_{T-t})Q'(b_t d_{T-t}))\right]dt, \\
\sigma_{\mx{T}} (\tr P,\tr Q^*) &=(r+s)\int_0^T\tau\!\left[P'(b_t c_{T-t})(Q'(b_t d_{T-t}))^*\right]dt, \quad \text{and} \\
\sigma_{\mx{T}} (\tr P^*,\tr Q^*) &=(s-r)\int_0^T\tau\!\left[(P'(b_t c_{T-t}))^*(Q'(b_t d_{T-t}))^*\right]dt.
\end{align*}
\end{corollary}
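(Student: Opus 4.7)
The plan is to specialize the explicit formula for $\tilde{\Gamma}^{\mx{T}}(v_\ex,v_\delta)$ derived inside the proof of Theorem~\ref{t.main.cov} to the single-variable case $J=\{1\}$, $\mx{T}=(T)$, and then recognize that the resulting trace polynomial in $\PP(J^3)$ is precisely the circle derivative pairing. By bilinearity of $\tilde{\Gamma}^{\mx{T}}$ it suffices to take monomials $P=X^n$, $Q=X^m$ (and their adjoints); in this case $\tr P$ corresponds in $\PP(\{1\})$ to $v_{\ex(n)}$ where $\ex(n) = ((1,1),\ldots,(1,1))$ of length $n$, while $\tr P^*$ corresponds to $v_{\ex^*(n)} = ((1,*),\ldots,(1,*))$.

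First I would compute the two building blocks at the identity: for $P=X^n$, a direct application of the chain rule and cyclicity of $\tr$ gives
\[
\partial_{\xi}\bigl([P]_N(B(\cdot)C)\bigr)(I_N)=n\,\tr\!\bigl(\xi(CB)^n\bigr),
\qquad
\partial_{\xi}\bigl([P^*]_N(B(\cdot)C)\bigr)(I_N)=n\,\tr\!\bigl(\xi^*((CB)^n)^*\bigr),
\]
and analogously for $Q$ with $D$ in place of $C$. Plugging these into
\[
N^2\Gamma^{\mx{T}}_N\bigl([P]_N(B(\cdot)C),[Q]_N(B(\cdot)D)\bigr)(I_N)
=\tfrac{T}{2}N^2\!\sum_{\xi\in\beta^N_{r,s}}\bigl(\partial_\xi f\bigr)\bigl(\partial_\xi g\bigr)(I_N)
\]
and applying the magic formulas of Proposition~\ref{magicformula} supplies the scalar: two ``unstarred'' $\xi$ factors give $(s-r)$, two ``starred'' $\xi^*$ factors again give $(s-r)$, while one of each gives $(r+s)$. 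This exactly matches the three cases $(\tr P,\tr Q)$, $(\tr P^*,\tr Q^*)$, $(\tr P,\tr Q^*)$ of the corollary.

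Next I would identify the resulting trace polynomial in $B,C,D$ with $\tr(P'(BC)Q'(BD))$ (or the conjugated variants). Using cyclicity, $\tr((CB)^n(DB)^m)=\tr((BC)^n(BD)^m)$, and since $P'(z)=inz^n$ under the circle-derivative convention, the factor $nm$ coming from $\partial_\xi$ on each side combines with the two $i$'s to give exactly $\tr(P'(BC)Q'(BD))$ (up to the global scalar $\tfrac{T}{2}(s-r)$). Thus on the nose
\[
N^2\,\Gamma^{\mx{T}}_N\bigl([P]_N(B(\cdot)C),[Q]_N(B(\cdot)D)\bigr)(I_N) = \left[\tfrac{T}{2}(s-r)\,\tr(P'(X_1X_2)Q'(X_1X_3))\right]_N\!(B,C,D),
\]
and Theorem~\ref{t.main.cov}'s characterization of $\tilde{\Gamma}^{\mx{T}}$ gives the first displayed identity. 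The other two identities are handled symmetrically, the only change being which magic formula applies (and hence which of $s\pm r$ appears) and the straightforward bookkeeping of how $(\cdot)^*$ propagates through the derivative.

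Finally, to get the integral formulas for $\sigma_{\mx{T}}$ I would feed the three expressions for $\tilde{\Gamma}^{\mx{T}}$ into Theorem~\ref{t.main.cov}, replacing $(B^N_{t\mx T},C^N_{(1-t)\mx T},D^N_{(1-t)\mx T})$ by the freely independent free multiplicative $(r,s)$-Brownian motions $(b_{tT},c_{(1-t)T},d_{(1-t)T})$, and performing the change of variable $u=tT$ in the integral. The main obstacle I anticipate is the careful sign/ordering bookkeeping: the abstract words $\ex^{(l)}\delta^{(h)}$ produced in the proof of Theorem~\ref{t.main.cov} depend on the positions $l,h$, and collecting the resulting elements of $\PP(\{1,2,3\})$ into a single closed form requires repeatedly invoking cyclicity of $\tr$ together with the constancy of $C(BC)^{n-1}B=(CB)^n$ across all split positions $l$. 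Once this consolidation is done, the remainder of the argument is routine application of already-proved results.
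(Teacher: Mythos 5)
Your proposal mirrors the paper's own proof essentially step for step: reduce to monomials $P=X^n$, $Q=X^m$ by bilinearity; compute $\partial_\xi$ of the translated trace functions at the identity to get $n\,\tr(\xi(CB)^n)$ and its $*$-variants; apply the magic formulas of Proposition~\ref{magicformula} to produce the coefficients $(s-r)$ and $(r+s)$; identify the resulting trace polynomial with the circle-derivative pairing $\tr(P'(X_1X_2)Q'(X_1X_3))$ (and conjugated variants) via cyclicity; and then feed the three closed forms for $\tilde{\Gamma}^{\mx{T}}$ into Theorem~\ref{t.main.cov} with the change of variables $u=tT$. This is the same argument the paper gives, so the proposal is correct and not a different route.
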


\begin{remark}
Let us make a few comments on this final corollary.
\begin{itemize}
\item[(1)] In the case $(r,s)=(1,0)$, this result shows that, for all $P,Q\in \C[X]$, the covariance of the random variables $N(\tr P(U_{T}^{N})-\E[\tr P(U_{T}^{N})])$ and $N(\tr Q(U_{T}^{N})-\E[\tr Q(U_{T}^{N})])$ is asymptotically equal to $\sigma_{\mx{T}} (\tr P,\tr Q^*)$, which reproduces exactly the expression of~\eqref{cococovariance} found by L\'evy and Ma\"ida in \cite{LevyMaida2010}.

\item[(2)] In the case $(r,s)=(\frac12,\frac12)$, this result shows that, for all $P\in \C[X]$, the fluctuation random variable $N\left(\tr(P(G^{N}_T))-\E\left[\tr(P(G^{N}_T))\right]\right)$ is asymptotically a circularly-symmetric complex normal distribution of variance $\int_0^T\tau(P'(b_t c_{T-t})(P'(b_td_{T-t}))^*)\,dt$, where $b,c,d$ are three freely independent standard free multiplicative Brownian motions.
\end{itemize}
\end{remark}

\begin{proof}
Let $P=X^n$ and $Q=X^m$. We have $\tr P=v_\ex$ and $\tr Q=v_\delta$ with $\ex=\overbrace{((1,1),\ldots,(1,1))}^{n\ \text{times}}\in \EX$ and $\delta=\overbrace{((1,1),\ldots,(1,1))}^{m\ \text{times}}\in \EX$. Let $N\in \mathbb{N}$, and $B,C,D\in \GL_N^J$. Then for all $G\in\M_N$,
\[ [\tr P]_N(CGB)=\tr((CGB)^n),\quad [\tr Q]_N(DGB)=\tr((DGB)^{m}). \]
We compute for all $\xi\in \beta_{r,s}^N$
$$\partial_{\xi} \left([\tr P]_N(B(\cdot)C)\right)(I_N^J)=n \tr(\xi (CB)^n)$$
and 
$$\partial_{\xi} \left([\tr Q]_N(B(\cdot)D)\right)(I_N^J)=m \tr(\xi (DB)^m).$$
Finally, using the magic formula of Proposition~\ref{magicformula}, we have
\begin{align*}\frac{N^2}{2}\sum_{\xi\in \beta_{r,s}^N}T\partial_{\xi} \left([\tr P]_N(B(\cdot)C)\right)\partial_{\xi} \left([\tr Q]_N(B(\cdot)D)\right)(I_N)&=\frac{T}{2}(s-r)mn \tr((CB)^n(DB)^m) \\
&= \left(\tilde{\Gamma}^{\mx{T}}(\tr P,\tr Q)\right)(B,C,D)\\
\end{align*}
with $\left(\tilde{\Gamma}^{\mx{T}}(\tr P,\tr Q)\right)=\frac{T}{2}(s-r)\tr(P'(X_1X_2)Q'(X_1X_3)) $. Similar computations lead to $\left(\tilde{\Gamma}^{\mx{T}}(\tr P,\tr Q^*)\right)=\frac{T}{2}(r+s)\tr(P'(X_1X_2)Q'(X_1X_3)^*)  $ and $\left(\tilde{\Gamma}^{\mx{T}}(\tr P^*,\tr Q^*)\right)=\frac{T}{2}(s-r)\tr(P'(X_1X_2)^*Q'(X_1X_3)^*)  ,$ and we extend the formulas to $P, Q\in \C[X]$ by bilinearity.

Thanks to Proposition~\ref{cov}, we know that
\begin{align*} \sigma_{\mx{T}} (\tr P,\tr Q) &=2\int_0^1\left(\tilde{\Gamma}^{\mx{T}}(P,Q)\right)(b_{t\mx{T}},c_{(1-t)\mx{T}},d_{(1-t)\mx{T}})dt\\
&=T (s-r)\int_0^1\tau\left[P'(b_{tT}c_{(1-t)T})Q'(b_{tT}d_{(1-t)T}))\right]dt\\
&=(s-r)\int_0^T\tau\left[P'(b_tc_{T-t})Q'(b_td_{T-t}))\right]dt,\end{align*}
and the two others cases are treated similarly.
\end{proof}

\appendix

\section{Appendix. The Intertwining Spaces $\PP(J)$ and $\C\{J\}$\label{appendix}}
Let $J$ be an index set. In this appendix, we describe the link between two spaces used to study trace polynomial functions, that is to say linear combination of functions $\M_N^J\to \M_N$ of the form
\[ \mx{A}\mapsto P_0(\mx{A})\tr(P_1(\mx{A}))\tr(P_2(\mx{A}))\cdots \tr(P_m(\mx{A})) \]
for some finite $m$, where $P_1,\ldots,P_m\in \C$ are noncommutative $*$-polynomials in $J$ variables. In Section~\ref{s.C{J}}, we already defined the space $\PP(J)$, introduced in~\cite{Driver2013,Kemp2013b}. Let us now define the space $\C\{J\}$, another space which has been introduced in~\cite{Cebron2013}. Finally, we will see that those two spaces are linked by a natural isomorphism.

The abstract trace polynomial algebra $\C\{J\}$ is a $\C$-algebra equipped with a center-valued expectation functional $\tr\colon\C\{J\}\to Z(\C\{J\})$: a linear map with values in the center of $\C\{J\}$ and satisfying $\tr(1_{\C\{J\}}) = 1_{{\C\{J\}}}$ and $\tr(\tr(A)B) = \tr(A)\tr(B)$ for all $A,B\in{\C\{J\}}$.  (Note: the symbol $\tr$ is presently denoting an abstract function, not necessarily the normalized trace on $\M_N$.)  The algebra ${\C\{J\}}$ is an extension of $\C\langle J\rangle = \C\langle X_j,X_j^\ast\colon j\in J\rangle$, the noncommutative polynomials in $J$ variables and their adjoints, in the sense that we have the injective inclusion $\C\langle J\rangle\subset \C\{J\}$. In~\cite{Cebron2013}, it is denoted by
\[  \C\{J\}\equiv \C\{X_j,X_j^{\ast}\colon j\in J\}.\]
It is defined by a universal property \cite[Universal Property 1.1]{Cebron2013}: let $\mathscr{A}$ be any $\C$-algebra equipped with a center-valued trace $\tau$, and specified elements $(A_{(j,\ex)})_{j\in J,\ex\in\{1,\ast\}}$ in $\mathscr{A}$.  Then there is a unique  algebra homomorphism $f\colon\C\{J\}\to\mathscr{A}$ such that
\begin{itemize}
\item[(1)] for all $(j,\ex)\in J\times\{ 1,\ast\}$, $f(X_j^\ex) = A_{(j,\ex)}$; and
\item[(2)] for all $X\in\C\{J\}$, $\tau(f(X)) = f(\tr(X))$.
\end{itemize}
This property uniquely defines $\C\{J\}$ up to adapted isomorphisms, cf.\ \cite[Proposition-Definition 1.3]{Cebron2013}, but we can also construct explicitly one realization of $\C\{J\}$ as a partially-symmetrized tensor algebra over $\C\langle J\rangle$. As a vector space, it has as a basis the set
\[ \left\{M_0\tr M_1\cdots \tr M_k, \quad k\in\N, \quad M_0,\ldots,M_k \text{ are monomials in } \C\langle J\rangle\right\} \]
Following~\cite{Cebron2013}, the universal property allows also to define a $\C\{J\}$-calculus. It is explicitly given as follows: for each $\mx{a}=(a_j)_{j\in J}\in\A^J$ and each $P_0,\ldots,P_k \in \C\langle J\rangle$, we have
$$\left(P_0\tr P_1\cdots \tr P_k\right) (\mx{a})=P_0(\mx{a})\cdot \tau (P_1(\mx{a}))\cdots \tau( P_k(\mx{a})),$$
and $P\mapsto P(\mx{a})$ is an algebra homomorphism. As a consequence, the space $\C\{J\}$ can be used to index the trace polynomial function on $\M_N$.

As we will now see, $\PP(J)$ is isomorphic to the ``scalar part'' $\tr(\C\{J\})$ of $\C\{J\}$.

\begin{lemma} For any index set $J$, there is an algebra isomorphism
\[ \Upsilon\colon \C\langle J\rangle \tensor \PP(J)\to   \C\{J\} \]
such that the restriction $\left.\Upsilon\right|_{1\tensor\PP(J)}$ is an algebra isomorphism onto $\tr(\C\{J\})$. More explicitly, $\Upsilon$ is given as follows: for any monomial $M_0\in\C\langle J\rangle$ and any words $\ex^{(j)}\in\EX$, we have
\[ \Upsilon(1)=1, \qquad \Upsilon\left(M_0\tensor v_{\ex^{(1)}}\cdots v_{\ex^{(k)}}\right) = M_0\tr(X_{\ex^{(1)}})\cdots \tr(X_{\ex^{(k)}}), \]
where, for all $\ex=((j_1,\ex_1),\ldots,(j_n,\ex_n))$, $X_\ex = X_{j_1}^{\ex_1}\cdots X_{j_n}^{\ex_n}$.  
\end{lemma}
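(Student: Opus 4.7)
The plan is to use the explicit basis descriptions of both sides to build $\Upsilon$ as a vector space isomorphism, then verify it is an algebra homomorphism using only the fact that traces land in the center of $\C\{J\}$. First I would fix a vector space basis of $\C\langle J \rangle \tensor \PP(J)$ consisting of elements $M \tensor v_{\ex^{(1)}}\cdots v_{\ex^{(k)}}$, where $M$ ranges over monomials in $\C\langle J\rangle$ and $(\ex^{(1)},\ldots,\ex^{(k)})$ over finite multisets of words in $\EX(J)$ (commutativity of $\PP(J)$ means we may pick any tuple representative). The formula in the statement then defines $\Upsilon$ unambiguously on this basis: since $\tr(X_{\ex^{(i)}}) \in Z(\C\{J\})$, the image $M\,\tr(X_{\ex^{(1)}})\cdots\tr(X_{\ex^{(k)}})$ does not depend on the order chosen, matching the symmetry on the left.

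To see $\Upsilon$ is an algebra homomorphism it suffices to check it on the algebra generators $X_j^\ex \tensor 1$ and $1 \tensor v_\ex$. The tensor factors $\C\langle J\rangle \tensor 1$ and $1 \tensor \PP(J)$ commute in the domain, and the corresponding images $X_j^\ex$ and $\tr(X_\ex)$ commute in the codomain because $\tr(X_\ex)$ is central. Within the second tensor factor, $v_\ex v_\delta = v_\delta v_\ex$ holds by commutativity of $\PP(J)$, and correspondingly $\tr(X_\ex)\tr(X_\delta) = \tr(X_\delta)\tr(X_\ex)$ in $\C\{J\}$ by the same centrality. All remaining defining relations come from the free algebra $\C\langle J\rangle$ and are preserved tautologically, so $\Upsilon$ respects the full presentation.

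Bijectivity then follows from the basis statement for $\C\{J\}$ recalled in the excerpt: a vector space basis is $\{M_0\, \tr(M_1)\cdots\tr(M_k)\}$ with $M_0,\ldots,M_k$ monomials in $\C\langle J\rangle$ and $\{M_1,\ldots,M_k\}$ an unordered multiset. The bijection between monomials $M$ in $\C\langle J\rangle$ and words $\ex\in \EX(J)$ determined by $X_\ex = M$ extends to a bijection from the chosen basis of $\C\langle J\rangle \tensor \PP(J)$ onto this basis of $\C\{J\}$, and $\Upsilon$ realizes precisely this bijection. Combined with the previous paragraph, $\Upsilon$ is an algebra isomorphism. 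For the restriction claim, $\Upsilon(1 \tensor \PP(J))$ is spanned by $\{\tr(X_{\ex^{(1)}})\cdots\tr(X_{\ex^{(k)}})\}$, while applying $\tr$ to the basis element $M_0\, \tr(M_1)\cdots\tr(M_k)$ of $\C\{J\}$ yields $\tr(M_0)\,\tr(M_1)\cdots\tr(M_k)$ by the defining relation $\tr(\tr(A)B) = \tr(A)\tr(B)$; thus the image of the restriction equals $\tr(\C\{J\})$, as desired.

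The only substantive input is the basis description of $\C\{J\}$, i.e., the algebraic independence of the products of traces of monomials. This is the content of the partially-symmetrized tensor algebra realization of $\C\{J\}$ summarized in the preceding paragraph of the excerpt; once granted, every remaining step is a direct verification. I do not anticipate any other real obstacle; the proof is essentially bookkeeping, organized around matching the commutative structure of $\PP(J)$ with the centrality of traces in $\C\{J\}$.
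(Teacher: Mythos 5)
Your proof is correct and follows essentially the first route given in the paper: define $\Upsilon$ on the natural monomial basis of $\C\langle J\rangle\tensor\PP(J)$, note it bijects onto the stated basis of $\C\{J\}$ (giving a vector space isomorphism), and verify multiplicativity using the centrality of $\tr(\C\{J\})$. You supply more detail on the algebra homomorphism check than the paper does (which simply says ``it is simple to check''), and your use of $\tr(\tr(A)B)=\tr(A)\tr(B)$ to identify the image of $1\tensor\PP(J)$ with $\tr(\C\{J\})$ is the right observation. The one small point worth tightening: rather than checking relations ``on generators,'' it is cleaner (and immediate) to verify multiplicativity directly on basis elements, $\Upsilon\bigl((M_0\tensor v_{\ex^{(1)}}\cdots v_{\ex^{(k)}})(N_0\tensor v_{\delta^{(1)}}\cdots v_{\delta^{(l)}})\bigr)=\Upsilon(M_0\tensor\cdots)\,\Upsilon(N_0\tensor\cdots)$, which holds precisely because the $\tr$-factors are central; this is what your argument amounts to anyway. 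The paper also sketches two alternatives (identifying $\C\langle J\rangle\tensor\PP(J)$ with the partially symmetrized tensor algebra, or invoking the universal property with the tracing map as center-valued expectation), but you did not need those.
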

\begin{proof}The homomorphism $\Upsilon$ transforms a basis of $\C\langle J\rangle \tensor \PP(J)$ into a basis of $\C\{J\}$, and is therefore a vector space isomorphism.  It is simple to check that it is also an algebra homomorphism. Alternatively, $\C\langle J\rangle\tensor\PP(J)$ is naturally isomorphic to the construction of $\C\{J\}$ in \cite[Appendix]{Cebron2013} as the partial symmetrization of the tensor algebra over $\C\langle J\rangle$ -- the polynomial algebra $\PP(J)$ is nothing other than the symmetric tensor algebra over $\C\langle J\rangle$. It is also easy to see that this map defines an algebra isomorphism using the universal property defining the space $\C\langle J\rangle$ in~\cite{Cebron2013}, where the center-valued expectation on the algebra $\C\langle J\rangle\tensor \PP(J)$ is the tracing map $\mathcal{T}$ of \cite[Definition 3.12]{Driver2013}, defined by
\[ \mathcal{T}(X_{\ex^{(0)}}\tensor v_{\ex^{(1)}}\cdots v_{\ex^{(k)}}) = v_{\ex^{(0)}}v_{\ex^{(1)}}\cdots v_{\ex^{(k)}}, \]
for any words $\ex^{(0)},\ldots,\ex^{(k)}\in\EX$.
\end{proof}

It is immediate that identifying $\tr(\C\{J\})$ with $\PP(J)$ via the isomorphism $\Upsilon$, the $\C\{J\}$-calculus is the same as the $\PP(J)$-calculus defined in~\ref{s.C{J}}: for all $P\in \tr(\C\{J\})\cong \PP(J)$, and all $\mx{a}\in \mathcal{A}^J$ we have
$$P(\mx{a})=[P]_{(\mathscr{A},\tau)}(\mx{a}).$$

\subsection*{Acknowledgments}  The authors wish to thank Bruce Driver for several very helpful mathematical conversations during the production of this paper.  In particular, we thank him for making us aware of the integral representation of Lemma \ref{l.Driver}, which greatly simplified the exposition of the proof of Theorem \ref{main}.  We also wish to extend our gratitude to the Fields Institute, for its excellent {\em Workshop on Analytic, Stochastic, and Operator Algebraic Aspects of Noncommutative Distributions and Free Probability} in July 2013, during which the authors met for the first time and conceived of the core ideas that led to the present work.

\bibliographystyle{acm}
\bibliography{Fluctuations}

\begin{thebibliography}{10}

\bibitem{Biane1997a}
{\sc Biane, P.}
\newblock Free {B}rownian motion, free stochastic calculus and random matrices.
\newblock In {\em Free probability theory ({W}aterloo, {ON}, 1995)}, vol.~12 of
  {\em Fields Inst. Commun.} Amer. Math. Soc., Providence, RI, 1997, pp.~1--19.

\bibitem{Biane1997}
{\sc Biane, P.}
\newblock {Segal--Bargmann Transform, Functional Calculus on Matrix Spaces and
  the Theory of Semi-circular and Circular Systems}.
\newblock {\em Journal of Functional Analysis 144}, 1 (Feb. 1997), 232--286.

\bibitem{Biane1998a}
{\sc Biane, P., and Speicher, R.}
\newblock {Stochastic calculus with respect to free Brownian motion and
  analysis on Wigner space}.
\newblock {\em Probab Theory Related Fields 112}, 3 (1998), 373--409.

\bibitem{BianeSpeicher2001}
{\sc Biane, P., and Speicher, R.}
\newblock Free diffusions, free entropy and free {F}isher information.
\newblock {\em Ann. Inst. H. Poincar\'e Probab. Statist. 37}, 5 (2001),
  581--606.

\bibitem{Cabanal-Duvillard2001}
{\sc Cabanal-Duvillard, T.}
\newblock {Fluctuations de la loi empirique de grandes matrices
  al\'{e}atoires}.
\newblock {\em Annales de l'Institut Henri Poincare (B) Probability and
  Statistics 37}, 3 (June 2001), 373--402.

\bibitem{Cebron2013}
{\sc C\'{e}bron, G.}
\newblock {Free convolution operators and free Hall transform}.
\newblock {\em Journal of Functional Analysis 265}, 11 (2013), 2645--2708.

\bibitem{Collins2003}
{\sc Collins, B.}
\newblock Moments and cumulants of polynomial random variables on unitary
  groups, the {I}tzykson-{Z}uber integral, and free probability.
\newblock {\em Int. Math. Res. Not.}, 17 (2003), 953--982.

\bibitem{CollinsKemp2014}
{\sc Collins, B., and Kemp, T.}
\newblock Liberation of projections.
\newblock {\em J. Funct. Anal. 266}, 4 (2014), 1988--2052.

\bibitem{CollinsSniady2006}
{\sc Collins, B., and {\'S}niady, P.}
\newblock Integration with respect to the {H}aar measure on unitary, orthogonal
  and symplectic group.
\newblock {\em Comm. Math. Phys. 264}, 3 (2006), 773--795.

\bibitem{Dahlqvist2014}
{\sc Dahlqvist, A.}
\newblock {Dualit\'{e} de Schur-Weyl, mouvement brownien sur les groupes de
  {L}ie compacts classiques et \'{e}tude asymptotique de la mesure de
  Yang-Mills, PhD thesis, Pierre et Marie Curie}.
\newblock {\em tel.archives-ouvertes.fr/tel-00961035\/} (June 2013).

\bibitem{Dahlqvist2013}
{\sc Dahlqvist, A.}
\newblock {Integration formula for Brownian motion on classical compact Lie
  groups}.
\newblock {\em arXiv:1212.5107\/} (2013).

\bibitem{Diaconis2001}
{\sc Diaconis, P., and Evans, S.}
\newblock {Linear functionals of eigenvalues of random matrices}.
\newblock {\em Transactions of the American Mathematical Society 353}, 7
  (2001), 2615--2633.

\bibitem{Driver2013}
{\sc Driver, B.~K., Hall, B.~C., and Kemp, T.}
\newblock {The Large-$N$ Limit of the Segal--Bargmann Transform on
  $\mathbb{U}_N$}.
\newblock {\em Journal of Functional Analysis 265}, 11 (May 2013), 2585--2644.

\bibitem{Janson1997}
{\sc Janson, S.}
\newblock {\em {Gaussian Hilbert Spaces}}.
\newblock Cambridge University Press, 1997.

\bibitem{Johansson1998}
{\sc Johansson, K.}
\newblock {On fluctuations of eigenvalues of random Hermitian matrices}.
\newblock {\em Duke Mathematical Journal 91}, 1 (Jan. 1998), 151--204.

\bibitem{Kemp2013a}
{\sc Kemp, T.}
\newblock {Heat Kernel Empirical Laws on $\mathbb{U}_N$ and $\mathbb{GL}_N$}.
\newblock {\em arXiv:1306.2140\/} (June 2013).

\bibitem{Kemp2013b}
{\sc Kemp, T.}
\newblock {The Large-$N$ Limits of Brownian Motions on $\mathbb{GL}_N$}.
\newblock {\em arXiv:1306.6033\/} (June 2013).

\bibitem{KNPS2012}
{\sc Kemp, T., Nourdin, I., Peccati, G., and Speicher, R.}
\newblock Wigner chaos and the fourth moment.
\newblock {\em Ann. Probab. 40}, 4 (2012), 1577--1635.

\bibitem{LevyMaida2010}
{\sc L\'{e}vy, T., and Ma\"{\i}da, M.}
\newblock {Central limit theorem for the heat kernel measure on the unitary
  group}.
\newblock {\em Journal of Functional Analysis 259}, 12 (Dec. 2010), 3163--3204.

\bibitem{MingoSpeicher2007}
{\sc Mingo, J.~A., {\'S}niady, P., and Speicher, R.}
\newblock Second order freeness and fluctuations of random matrices. {II}.
  {U}nitary random matrices.
\newblock {\em Adv. Math. 209}, 1 (2007), 212--240.

\bibitem{MingoSpeicher2006}
{\sc Mingo, J.~A., and Speicher, R.}
\newblock Second order freeness and fluctuations of random matrices. {I}.
  {G}aussian and {W}ishart matrices and cyclic {F}ock spaces.
\newblock {\em J. Funct. Anal. 235}, 1 (2006), 226--270.

\bibitem{Nica2006}
{\sc Nica, A., and Speicher, R.}
\newblock {\em {Lectures on the Combinatorics of Free Probability}}, vol.~335
  of {\em London Mathematical Society Lecture Note Series}.
\newblock Cambridge University Press, 2006.

\bibitem{Voiculescu1991}
{\sc Voiculescu, D.}
\newblock Limit laws for random matrices and free products.
\newblock {\em Invent. Math. 104}, 1 (1991), 201--220.

\bibitem{Wigner1958}
{\sc Wigner, E.~P.}
\newblock On the distribution of the roots of certain symmetric matrices.
\newblock {\em Ann. of Math. (2) 67\/} (1958), 325--327.

\end{thebibliography}

\end{document}